\documentclass[11pt,reqno]{amsart}
\usepackage{amsmath,amssymb,amsfonts,amsthm, amscd,indentfirst}
\usepackage{amsmath,latexsym,amssymb,amsmath,%awheremscd,
amscd,amsthm,amsxtra}
\usepackage{hyperref}\usepackage{url}
\usepackage{color}
\usepackage{pdfpages}
\usepackage{graphics}
\usepackage{enumitem}

\usepackage{mathtools}

\usepackage{graphicx}
\usepackage{caption}

\usepackage{epsfig,here}
\usepackage{subfigure,here}

\newtheorem{defn}{Definition}[section]
\newtheorem{theorem}{Theorem}[section]
\newtheorem{prop}{Proposition}[section]
\newtheorem{lemma}[theorem]{Lemma} 
\newtheorem{corollary}[theorem]{Corollary} 

\newtheorem{remark}{Remark}[section]

\usepackage[top=1.2in,bottom=1.2in,left=1.0in,right=1.0in]{geometry} 

\hypersetup{colorlinks=true, linkcolor=black, citecolor=black}

\def\rr{\mathbb{R}}

\def\hh{\mathbb{H}}

\def\tr{\mathrm{tr}}

\def\p{\partial}

\def\th{\theta}

\def\p{\partial}

\def\<{\langle}
\def\>{\rangle}

\def\n{\nabla}

\numberwithin{equation} {section}

\begin{document}

\title[Quermassintegral inequalities for capillary hypersurfaces in a horoball] {Quermassintegral inequalities for capillary hypersurfaces in a horoball}% in hyperbolic space}
	\author{Jinyu Guo}
	\address{School of Mathematics and Statistics, FJKLMAA, Key Laboratory of Analytical Mathematics and Applications (Ministry of Education), Fujian Normal University, 350117, Fuzhou, China}
\email{guojinyu@fjnu.edu.cn}
\author{Liangjun Weng}
	\address{Centro di Ricerca Matematica Ennio De Giorgi, Scuola Normale Superiore,
56126, Pisa, Italy}
	\email{liangjun.weng@sns.it}

	\begin{abstract}
In this paper, we study the quermassintegral inequalities for capillary hypersurfaces in a horoball in hyperbolic space. We construct a locally constrained nonlinear curvature flow that preserves the \(k\)-th quermassintegral while monotonically increasing the \(l\)-th quermassintegral for any \(1\leq l<k\leq n\). Using this flow, we establish a full family of the quermassintegral inequalities for horo-convex capillary hypersurfaces in a horoball with contact angle $\theta\in(0,\pi/2]$. Moreover, we weaken the horo-convexity to convexity and obtain the quermassintegral inequalities for convex capillary hypersurfaces between the \(n\)-th  and \(l\)-th quermassintegrals.

\end{abstract}

\keywords{Alexandrov--Fenchel inequalities, capillary hypersurfaces, quermassintegrals, locally constrained curvature flows, horoball}
  
\subjclass[2020]{Primary 52A40;
  Secondary 52A39, 53E10, 35K55, 35B40, 52A55.}
  
	\maketitle
	
%	\medskip

%\tableofcontents

\section{Introduction}

%The Alexandrov--Fenchel inequality \cite{Alex, Fenchel} relates the mixed volumes of convex bodies in Euclidean space. It yields sharp inequalities between their quermassintegrals, including the isoperimetric and Minkowski inequalities. In this paper, we study analogous inequalities for convex capillary hypersurfaces in a horoball in hyperbolic space. We first recall the quermassintegrals of a convex body 

The Alexandrov--Fenchel inequality \cite{Alex, Fenchel} relates the mixed volumes of convex bodies in Euclidean space. It yields sharp inequalities between their quermassintegrals, including the classical isoperimetric and Minkowski inequalities. In this paper, we establish analogous inequalities for convex capillary hypersurfaces in a horoball in hyperbolic space. We first recall the quermassintegrals of a convex body $\Omega$ in hyperbolic space $\mathbb{H}^{n+1}$. For $1\leq j\leq n+1$, $W_j(\Omega)$ is the suitably normalized invariant measure of the set of $j$-dimensional totally geodesic subspaces intersecting $\Omega$; see \cite[Chapter 17]{Santalo}. We use the normalization as \begin{equation*} W_0(\Omega)=\operatorname{Vol}(\Omega), \qquad W_1(\Omega)=\frac{1}{n+1}\operatorname{Area}(\partial\Omega). \end{equation*} When $\partial\Omega$ is of class $C^2$, these functionals satisfy the recurrence relation \begin{equation*} W_{j+1}(\Omega) =\frac{1}{n+1}\int_{\partial\Omega}H_j dA  -\frac{j}{n+2-j}W_{j-1}(\Omega),\qquad 1\leq j\leq n,
\end{equation*}
where $H_j$ is the normalized $j$-th mean curvature of
$\partial\Omega$ %with respect to the outward unit normal,
and $H_0\equiv1$.

The first variation formula provides the link between
quermassintegrals and curvature flows. For a smooth family of
convex bodies $\Omega_t$ whose boundaries have outward normal
speed $f$, one has
\begin{equation*}
  \frac{d}{dt}W_j(\Omega_t)
  =\frac{n+1-j}{n+1}
   \int_{\partial\Omega_t} f H_j dA_t,
  \qquad 0\leq j\leq n,
\end{equation*}see, e.g., \cite[Proposition 3.1]{WX1}, or
\cite[Lemma 2.3]{ACW} after adjusting the normalization.

For the Minkowski formulae below, we write the hyperbolic metric
in geodesic polar coordinates about a fixed origin as
\begin{equation*}
  \bar g=dr^2+\lambda^2(r)g_{\mathbb{S}^n},
  \qquad
  \lambda(r)\coloneqq \sinh r,
  \qquad r>0,
\end{equation*}
where $g_{\mathbb{S}^n}$ is the standard round metric on
$\mathbb{S}^n$.

For a smooth closed oriented hypersurface $\Sigma\subset\mathbb{H}^{n+1}$, the conformal Killing field $\lambda\partial_r$ yields the Minkowski formulae
\begin{equation}\label{eq:Minkowski-formula}
  \int_\Sigma \lambda' H_{k-1} dA
  =\int_\Sigma uH_k dA,
  \qquad 1\leq k\leq n,
\end{equation} where $\lambda'(r)=\cosh r$ and $u\coloneqq \bar g(\lambda\partial_r,\nu)$ is the support function; see, e.g., \cite[Proposition 2.5]{GL-2015-IMRN}.

Using quermassintegral preserving \textit{non-local} type curvature flows, Wang--Xia proved the following sharp Alexandrov--Fenchel inequalities in \cite[Theorem 1.1]{WX1}. Here \emph{horo-convexity} means that all principal curvatures of the hypersurface are at least one.

\medskip 
\noindent{\bf Theorem} (\cite{WX1})
{\it Let $\Omega\subset\mathbb H^{n+1}$ be a bounded horo-convex domain with smooth boundary. Then for $0\leq l<k\leq n$, there holds 
\begin{equation}\label{eq000-Wk}
  W_k(\Omega)\geq W_k(B_r),
\end{equation}
where $B_r$ is a geodesic ball such that $W_l(B_r)=W_l(\Omega)$. Moreover, equality \eqref{eq000-Wk} holds if and only if $\Omega$ is a geodesic ball.}

 \medskip

  The Minkowski formulae \eqref{eq:Minkowski-formula} also underlie a quermassintegral preserving \textit{local} type curvature flow introduced by Brendle--Guan--Li in an unpublished manuscript \cite{BGL}; see
\cite[Theorem B]{HLW} and \cite[Section 4]{GL5}:
\begin{equation*}%\label{BGL-1}
  \partial_tx=\left(\frac{\lambda'}{F}-u\right)\nu,
  \qquad F=\frac{H_k}{H_{k-1}}.
\end{equation*}
They proved long-time existence and smooth convergence for strictly convex initial hypersurface when $k=n$, and for star-shaped $k$-convex initial hypersurface satisfying an additional gradient condition when $1\leq k<n$. In particular, their result gives \eqref{eq000-Wk} for $k=n$ and $0\leq l<n$ under strict convexity. Hu--Li--Wei \cite{HLW} established smooth convergence of this flow for a horo-convex initial hypersurface for every $1\leq k\leq n$, providing another proof of the Alexandrov--Fenchel inequalities \eqref{eq000-Wk}. Related sharp quermassintegral inequalities have been proved for star-shaped two-convex hypersurfaces in \cite{LWX} and for hypersurfaces with nonnegative intrinsic sectional curvature in \cite{HL1}. Locally constrained inverse curvature flows in warped product
spaces with applications to geometric inequalities were
developed by Scheuer--Xia \cite{SX}. The weighted quermassintegral inequalities for static convex domains were established in \cite{HL}.  Andrews--Chen--Wei \cite{ACW} proved quermassintegral inequalities with prescribed volume under positive sectional curvature, as well as inequalities for modified quermassintegrals under strict horo-convexity. Using duality with de Sitter space, Hu--Li \cite{HuLiDuality}
proved sharp comparisons of $W_{n-1}$ with $W_l$ when $0\leq l<n-1$
for smooth bounded strictly convex domains in hyperbolic space. For more developments in hypersurface curvature flows and geometric inequalities, we refer to \cite{AHL, Andrews-Wei2018, CGLS, CS, DeG, GeWangWu, GL-2015-IMRN, GL5, GL4, luo-wei-zhou, Makowski-Scheuer, Wei2025, WeiXiong, Wu}
and the references therein.

We next turn to hypersurfaces with capillary boundary, which meet a supporting hypersurface at a constant angle $\theta\in(0,\pi)$. The case $\theta=\pi/2$ is called a free boundary hypersurface. In the Euclidean unit ball,  \cite{WX3} and \cite{WW} studied locally constrained mean curvature flows for star-shaped hypersurfaces with free and capillary boundary, respectively. The latter work assumes $|\cos\theta|<(3n+1)/(5n-1)$. Lambert--Scheuer \cite{LambertScheuer2016, LambertScheuer2017} employed inverse mean curvature flow with free boundary to prove that strictly convex hypersurfaces converge to a flat disk, and to establish a Willmore-type inequality in dimensions $n\geq3$. Scheuer--Wang--Xia \cite{SWX} initiated quermassintegrals for
free boundary hypersurfaces and proved sharp inequalities
comparing the $n$-th quermassintegral with each lower-order one for convex hypersurfaces. Their capillary counterpart was introduced in \cite{WengX}, where these inequalities were extended to $\theta\in(0,\pi/2]$.  
Further results in this angle range include the inequalities between the quermassintegrals and the enclosed volume obtained by Hu--Wei--Yang--Zhou \cite{HWYZ} for convex hypersurfaces, and the inverse curvature flows studied by Pan--Yang \cite{PanYang} for strictly convex capillary hypersurfaces. The latter work also proved the
Alexandrov--Fenchel type inequalities for weakly convex free boundary hypersurfaces. Very recently, Pan--Scheuer \cite{PanScheuer} showed the full family of quermassintegral inequalities for $\theta\in(0,\pi/2]$ within a class of $\theta$-horocap-convex hypersurfaces.

In the Euclidean half-space, capillary quermassintegrals were introduced in \cite{WWX1} for capillary hypersurfaces, where quermassintegral inequalities comparing the $n$-th quermassintegral with those of lower orders were established for convex hypersurfaces with $\theta\in(0,\pi/2]$. A boundary tensor maximum principle was developed in \cite{HWYZhalf} and applied to prove convexity preservation for the locally constrained inverse curvature flow proposed in \cite{WWX1}, thus yielding the full family of capillary quermassintegral inequalities for the same acute angle range. By a spectral method, a general Alexandrov--Fenchel inequality and its equality characterization for mixed volumes of capillary convex bodies were obtained in \cite{MWWX}, which, in particular, yielded the full family of capillary quermassintegral inequalities for all $\theta\in(0,\pi)$. There are other interesting results concerning curvature flows and geometric inequalities for capillary hypersurfaces in the Euclidean half-space; see, for instance, \cite{DGL, Gao-Li2026, JWXZ, MWW2024, MW2025-agag, SW, WWX2} among many others.

In hyperbolic space, a locally constrained mean curvature type flow for star-shaped hypersurfaces with free boundary in a geodesic ball was studied in \cite{QWX}. The corresponding star-shaped capillary hypersurfaces supported in a geodesic ball in
space forms were investigated in \cite{MW1} under $|\cos\theta|<(3n+1)/(5n-1)$. Chai \cite{ChaiIMCF} studied the inverse mean curvature flow for
strictly convex hypersurfaces with free boundary in a geodesic ball and used its long-time convergence to a totally geodesic disk to obtain a Willmore-type inequality.  For capillary hypersurfaces supported on a totally geodesic
hyperplane, Chai--Chen \cite{CC} obtained long-time existence
and smooth convergence under star-shapedness, a containment
condition and a corresponding restriction on the contact angle.
For hypersurfaces supported on a horosphere, the first author
\cite{Guo1} proved the analogous convergence result under
star-shapedness and $|\cos\theta|<(3n+1)/(5n-1)$. Capillary quermassintegrals for capillary hypersurfaces supported on a totally geodesic hyperplane were considered in \cite{MW}, where quermassintegral inequalities comparing the $n$-th quermassintegral with those of lower orders were obtained for convex hypersurfaces with $\theta\in(0,\pi/2]$ satisfying an additional containment condition.

In this paper, we study capillary quermassintegral inequalities of capillary hypersurfaces supported on a horosphere in hyperbolic space and construct a locally constrained inverse curvature flow adapted to the horospherical boundary. We use the upper half-space model
\begin{eqnarray*} 
  \hh^{n+1}
  = \left\{x=(x_1,\ldots,x_{n+1})\in\rr^{n+1}:x_{n+1}>0 \right\},
  \qquad \bar g=\frac{1}{x_{n+1}^{2}}\delta,
\end{eqnarray*}
where $\delta$ is the Euclidean metric. Up to a hyperbolic
isometry, the horoball and its supporting horosphere are
\begin{equation*}
  \mathcal H_+=\{x\in\hh^{n+1}:x_{n+1}>1\}
\end{equation*}
and
\begin{equation*}
  \mathcal H=\partial\mathcal H_+
  =\{x\in\hh^{n+1}:x_{n+1}=1\}.
\end{equation*}
With respect to the outward unit normal
$\bar N=-E_{n+1}=(0,\ldots,0,-1)$, the horosphere $\mathcal H$
has all principal curvatures equal to one, and its induced metric
is a Euclidean metric.

From now on, we assume that hypersurface $\Sigma$ in
$\overline{\mathcal H_+}$ is smooth, compact, connected and
properly embedded, with interior in $\mathcal H_+$ and
nonempty boundary on $\mathcal H$.
We write $\widehat\Sigma$ for the bounded domain enclosed by
$\Sigma$ and its wetting domain
$\widehat{\partial\Sigma}\subset\mathcal H$, and choose the
unit normal $\nu$ along $\Sigma$ to point out of
$\widehat\Sigma$. 

A complete umbilical hypersurface in a space form has constant principal curvature $\kappa\in \mathbb{R}$. After choosing the orientation appropriately, one may assume $\kappa\geq0$. In Euclidean space, the complete umbilical hypersurfaces are only hyperplanes and round spheres, while in spherical space they are geodesic spheres, including totally geodesic great spheres. In hyperbolic space, however, they comprise four classes: geodesic spheres $(\kappa>1)$, horospheres $(\kappa=1)$, equidistant hypersurfaces $(0<\kappa<1)$, and totally geodesic hyperplanes $(\kappa=0)$. Unlike geodesic spheres, the other three cases are non-compact; see, e.g., \cite[Chapter 10]{RAF3}. Thus, capillary hypersurfaces supported on a horosphere exhibit features not present in either the geodesic-ball or the totally geodesic half-space setting.

\begin{defn} 
A hypersurface $\Sigma$ has $\theta$-capillary boundary in
$\mathcal H_+$ for constant angle $\theta\in(0,\pi)$, if
\begin{equation}\label{angle}
  \bar g(\bar N,\nu)=-\cos\theta
  \qquad\text{ on  }\partial\Sigma.
\end{equation}
The case $\theta=\pi/2$ is called a free boundary hypersurface.
\end{defn}

 The model hypersurfaces are the umbilical caps of Euclidean
radius $r>0$ given by
\begin{equation*}
  C_{\theta,r}(E_{n+1})
  \coloneqq \{x\in\mathbb R^{n+1}_+:
  |x+(r\cos\theta-1)E_{n+1}|=r,\ x_{n+1}\geq1\},
\end{equation*}
which enclose the domains
\begin{equation*}
  \widehat{C_{\theta,r}(E_{n+1})}
  \coloneqq \{x\in\mathbb R^{n+1}_+:
  |x+(r\cos\theta-1)E_{n+1}|\leq r,\ x_{n+1}\geq1\}.
\end{equation*}
Their principal curvature is $\kappa=\frac{1}{r}-\cos\theta$.
In particular, they are strictly convex precisely when
$r\cdot\cos\theta<1$. On each model cap one has
\begin{equation}\label{dfec}
  \frac{1}{x_{n+1}}-\cos\theta\bar g(x,\nu)
  =\kappa~\bar g(x-E_{n+1},\nu)
  \qquad\text{on }C_{\theta,r}(E_{n+1}),
\end{equation}
see, e.g., \cite{Guo1}. For the flow considered below (cf. Eq. \eqref{eqn:intro-equationflow}), we introduce
the conformal Killing field $X$ and the function $V$ by
\begin{equation}\label{eqn:X-V}
  X\coloneqq x-E_{n+1},\qquad V\coloneqq \frac{1}{x_{n+1}}.
\end{equation}

For $\theta\in(0,\pi/2]$, we define model functions as
\begin{equation*}
  f_k(r)\coloneqq
  W_{k,\theta}\left(\widehat{C_{\theta,r}(E_{n+1})}\right),
  \qquad 1\leq k\leq n,\quad 0<r<\frac{1}{\cos\theta},
\end{equation*}
where the capillary quermassintegrals $W_{k,\theta}$ are defined in Section \ref{sec2.4:quermassintergal}, and
$1/\cos\theta=+\infty$ when $\theta=\pi/2$.
The functions $f_k$ are strictly increasing in this interval and 
their inverses are taken in their respective images.
Our first result compares these quermassintegrals for
horo-convex hypersurfaces.

\begin{theorem}\label{mainthm-horo}
For $\theta\in(0,\pi/2]$. Let $\Sigma$ be a compact
horo-convex hypersurface with $\theta$-capillary boundary in a horoball
$\mathcal H_+$. Assume that
$E_{n+1}\in\operatorname{int}_{\mathcal H}
(\widehat{\partial\Sigma})$. Then for $1\leq l<k\leq n$,
\begin{equation}\label{AF-horo}
  W_{k,\theta}(\widehat\Sigma)
  \geq(f_k\circ f_l^{-1})
  \bigl(W_{l,\theta}(\widehat\Sigma)\bigr).
\end{equation}
Equality holds if and only if $\Sigma$ is an umbilical cap
with $\theta$-capillary boundary.
\end{theorem}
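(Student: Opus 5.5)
The plan is to use the locally constrained inverse curvature flow from the abstract, i.e. the flow \eqref{eqn:intro-equationflow} built from $X=x-E_{n+1}$ and $V=1/x_{n+1}$ of \eqref{eqn:X-V}. Concretely, I would flow $\Sigma$ by
\begin{equation*}
  \partial_t x=\left(\frac{V-\cos\theta\,\bar g(x,\nu)}{F}-\bar g(X,\nu)\right)\nu,
  \qquad F=\frac{H_k}{H_{k-1}},
\end{equation*}
with the $\theta$-capillary condition \eqref{angle} imposed on $\partial\Sigma\subset\mathcal H$. Identity \eqref{dfec} shows that every umbilical cap $C_{\theta,r}(E_{n+1})$ with $\kappa=F$ is a static solution, so caps are the expected limits.

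The first ingredients are capillary Minkowski formulae. For $1\le j\le n$ I would derive
\begin{equation*}
  \int_\Sigma \bigl(V-\cos\theta\,\bar g(x,\nu)\bigr)H_{j-1}\,dA=\int_\Sigma \bar g(X,\nu)H_j\,dA.
\end{equation*}
This should follow by integrating the divergence of the tangential part of the conformal Killing field $X$, corrected by a boundary term. The $\cos\theta$ term is exactly what cancels the boundary contribution, using that $\mathcal H$ is umbilic with $\kappa=1$ and the angle condition.

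Combining these formulae with the first variation of $W_{j,\theta}$, which carries speed $f$ times $H_j$ up to a positive constant, gives
\begin{equation*}
  \tfrac{d}{dt}W_{k,\theta}\propto\int_\Sigma\bigl(V-\cos\theta\,\bar g(x,\nu)\bigr)H_{k-1}-\bar g(X,\nu)H_k=0.
\end{equation*}
For $l<k$, the Newton--Maclaurin inequality $H_{l}H_{k-1}\ge H_{l-1}H_k$, together with positivity of $V-\cos\theta\,\bar g(x,\nu)$, yields
\begin{equation*}
  \tfrac{d}{dt}W_{l,\theta}\ge\int_\Sigma\bigl(V-\cos\theta\,\bar g(x,\nu)\bigr)H_{l-1}-\bar g(X,\nu)H_l=0,
\end{equation*}
where the final equality is again a Minkowski formula. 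Equality holds only when the surface is umbilic. Positivity of $V-\cos\theta\,\bar g(x,\nu)$ should follow from horo-convexity and $\theta\le\pi/2$. The hypothesis $E_{n+1}\in\operatorname{int}(\widehat{\partial\Sigma})$ gives star-shapedness with respect to $E_{n+1}$, i.e. $\bar g(X,\nu)>0$, which should also hold along the flow.

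Next I need long-time existence and convergence. I would write the flow as a scalar parabolic equation with oblique (Neumann-type) boundary condition for a radial graph function over a spherical cap centered at $E_{n+1}$. The estimates come in stages:
\begin{itemize}
  \item $C^0$ bounds, by comparison with the static caps as barriers.
  \item Gradient bounds, from star-shapedness via the maximum principle on the support function; at the boundary, the capillary condition gives the sign of the normal derivative when $\theta\le\pi/2$.
  \item Curvature bounds: a bound on $F$ from its evolution equation.
  \item Preservation of horo-convexity, the key step, discussed below.
\end{itemize}
Once horo-convexity is preserved, the principal curvatures are pinched between $1$ and a bound controlled by $F$ and $H_k$, which gives uniform parabolicity. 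Krylov--Safonov and oblique-derivative Schauder theory then give higher regularity and long-time existence. Monotonicity of $W_{l,\theta}$ with equality only at umbilical caps, via a standard subsequence argument, gives convergence to a cap $C_{\theta,r_\infty}$ with $W_{k,\theta}=f_k(r_\infty)$. Then
\begin{equation*}
  W_{l,\theta}(\widehat\Sigma)\le f_l(r_\infty),
\end{equation*}
and since the $f_j$ are increasing this rearranges to \eqref{AF-horo}. Equality forces $\frac{d}{dt}W_{l,\theta}\equiv0$ at $t=0$, hence umbilicity, so $\Sigma$ is a cap.

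The main obstacle is preserving horo-convexity, i.e. $h_{ij}-g_{ij}\ge0$, up to the boundary. In the interior I would apply Andrews' tensor maximum principle to the evolution of $h_{ij}-g_{ij}$. This relies on $F=H_k/H_{k-1}$ being concave and inverse-concave, as in Hu--Li--Wei \cite{HLW}. At the boundary I would use that the horosphere $\mathcal H$ is umbilic with $\kappa=1$ and the capillary angle is constant. These give $\nabla_\mu h(e,e)$ in the conormal direction $\mu$ for tangential directions $e$, and a computation of $\nabla_\mu h(\mu,\mu)$. The aim is that, at a boundary point where the minimal eigenvalue of $h_{ij}-g_{ij}$ first reaches $0$, Hopf's lemma is contradicted, in the spirit of the boundary tensor maximum principle of \cite{HWYZhalf}. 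I expect the sign to work out precisely because $\theta\le\pi/2$ and $\mathcal H$ has curvature exactly $1$, the horo-convex threshold. If the direct argument fails at mixed directions, I would try the modified tensor $h_{ij}-g_{ij}+\varepsilon(\cdot)$ with a boundary-adapted correction.
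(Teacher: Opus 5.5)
Your architecture matches the paper's: the same flow \eqref{equationflow}, the Minkowski formulae, preservation of $W_{k,\theta}$ and monotonicity of $W_{l,\theta}$, cap barriers, the boundary tensor maximum principle of \cite{HWYZhalf}, and convergence to a cap. However, the step you call the key one is not carried by the ingredients you name, and you have placed the difficulty in the wrong spot.

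\textbf{The interior null-vector condition.} At a null direction $e_1$ of $S=h-g$ (so $\kappa_1=1$), the factor $A_\theta$ in the speed produces the cross term $2F^{-2}\nabla_1F\,\nabla_1A_\theta$ next to $-2A_\theta F^{-3}|\nabla_1F|^2$. Suppose you use inverse-concavity of $F$ itself, as your appeal to Hu--Li--Wei suggests. Then the $F^{kl;rs}$ term together with the supremum over $\Lambda$ yields at most $2A_\theta F^{-3}|\nabla_1F|^2$. This exactly cancels the negative term and leaves nothing to absorb the cross term. The paper needs two further inputs.
\begin{itemize}
\item[(i)] Inverse-concavity of the shifted function $G(\lambda)=F(1+\lambda)-1$ (Proposition \ref{lem:shifted-inverse}). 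This improves the coefficient to $\frac{2}{F-1}$. After completing the square, what remains is $(1-F^{-1})\bigl(V-\bar\nabla_\nu V-(2A_\theta)^{-1}|\nabla_1A_\theta|^2\bigr)$.
\item[(ii)] The geometric inequality $2A_\theta(V-\bar\nabla_\nu V)\geq|\bar\nabla_{e_1}A_\theta|^2$ in curvature-one directions (Lemma \ref{lem:geometric-gradient}).
\end{itemize}
For $\theta=\pi/2$, (ii) is automatic since $|\bar\nabla V|=V$. For $\theta<\pi/2$, the term $\cos\theta\,\bar g(x,\nu)$ in $A_\theta$ makes it a genuinely geometric statement. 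Its proof uses a supporting horoball $\mathbb B_\rho(q)$ at each point, the identity $\rho=(V-\bar\nabla_\nu V)^{-1}$, and the bound $V-\bar\nabla_\nu V\geq 1+\cos\theta$ (Lemma \ref{horo-bate}). It also uses $|q-E_{n+1}|\leq\rho$, which comes from $E_{n+1}\in\operatorname{int}_{\mathcal H}(\widehat{\partial\Sigma})$. Nothing in your plan supplies this.

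\textbf{The boundary condition is the easy part.} It reduces to $h_{\alpha\alpha;\mu}=\tilde h_{\alpha\alpha}(h_{\mu\mu}-h_{\alpha\alpha})$ with $\tilde h\geq 0$ (Corollary \ref{cor222}), together with $\nabla_\mu F=0$. The latter holds because $f$, $A_\theta$ and $\bar g(X,\nu)$ all satisfy $\nabla_\mu(\cdot)=(\csc\theta+\cot\theta\, h_{\mu\mu})(\cdot)$ on $\partial\Sigma_t$.

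\textbf{The curvature estimates.} For $k\geq 2$, the bounds $\kappa_i\geq 1$ and $F\leq C$ do not bound $\kappa_{\max}$. For instance, when $k=n$ one has $F\leq n\kappa_{\min}$ whatever $\kappa_{\max}$ is. You give no mechanism for bounding $H_k$. The paper proves a separate maximum-principle bound for $H$ (Proposition \ref{4.8}). That proof uses the boundary sign $\nabla_\mu H\leq 0$, which follows from $\nabla_\mu F=0$, concavity of $F$ and $\tilde h\geq 0$. It also uses the positivity $3A_\theta^2-\cos^2\theta\,\bar g(x,x)+V^2>0$, which comes from $\widehat{\Sigma_t}\subseteq\widehat{C_{\theta,R_2}(E_{n+1})}$ with $R_2\cos\theta<1$.

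Even the upper bound on $F$ does not come from the evolution of $F$ alone. Its zeroth-order term $(\mathcal F-1)\bar g(E_{n+1},\nu)\geq 0$ only yields linear growth in $t$; for $k=n$ the other zeroth-order term vanishes identically. The paper instead applies the maximum principle to $\log F-V+\frac12|X|^2$, exploiting the $-|X|^2$ term in \eqref{Phsi}.

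\textbf{The containment in a cap.} You need containment in a cap with $R\cos\theta<1$, both for $A_\theta>0$ and for the outer barrier. This is not a direct consequence of $\theta\leq\pi/2$. It is Lemma \ref{lemma-horo-umbilical-cap}, obtained from supporting horoballs of Euclidean radius $(1+\cos\theta)^{-1}$ at boundary points via Blaschke's rolling theorem.
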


In order to establish \eqref{AF-horo}, we construct a locally constrained curvature flow for $2\leq k\leq n$. Let $x(\cdot,t):\Sigma\to\overline{\mathcal H_+}$ be a family of evolving embeddings with $x(\partial\Sigma,t)\subset\mathcal H$, satisfying
\begin{equation}\label{eqn:intro-equationflow}
\begin{cases}
  (\partial_t x)^\perp
  =\left(\dfrac{V-\cos\theta\bar g(x,\nu)}{H_k/H_{k-1}}
  -\bar g(X,\nu)\right)\nu,
  &\text{in }\Sigma\times[0,T),\\
  \bar g(\nu,\bar N\circ x)=-\cos\theta,
  &\text{on }\partial\Sigma\times[0,T),\\
  x(\cdot,0)=x_0,
  &\text{on }\Sigma.
\end{cases}
\end{equation}
This flow preserves $W_{k,\theta}$ and makes $W_{l,\theta}$ nondecreasing for $1\leq l<k$; see Proposition \ref{Wk-Wn}. For horo-convex initial hypersurfaces, we obtain long-time existence and smooth convergence to an umbilical cap. The resulting comparison with the limit gives \eqref{AF-horo}.

For $k=n$, the flow \eqref{eqn:intro-equationflow} also derives inequalities under convexity and a containment condition. We include the Gauss--Bonnet--Chern type theorem for the highest-order quermassintegral $W_{n+1,\theta}$ in the following statement.

\begin{theorem}\label{mainthm}
For $\theta\in(0,\pi/2]$. Let $\Sigma$ be a compact convex hypersurface with $\theta$-capillary boundary in a horoball $\mathcal H_+$. Assume that $E_{n+1}\in\operatorname{int}_{\mathcal H}(\widehat{\partial\Sigma})$ and there is a constant $R>0$ such that
\begin{equation}\label{assump}
  \widehat\Sigma\subseteq
  \widehat{C_{\theta,R}(E_{n+1})},
  \qquad R\cdot\cos\theta<1.
\end{equation}
Then for $1\leq l\leq n-1$,
\begin{equation*}
  W_{n,\theta}(\widehat\Sigma)
  \geq(f_n\circ f_l^{-1})
  \bigl(W_{l,\theta}(\widehat\Sigma)\bigr).
\end{equation*}
Equality holds if and only if $\Sigma$ is an umbilical cap with $\theta$-capillary boundary.

Moreover, the highest-order quermassintegral given by \eqref{Wn-1113} satisfies  \begin{equation}\label{eq:top-order-GB}
  W_{n+1,\theta}(\widehat\Sigma)
  =\frac{\omega_{n-1}}{n+1}
  \int_0^\theta\sin^{n-1}s~ds,
\end{equation}where $\omega_{n-1}=|\mathbb S^{n-1}|$. Thus, for fixed $n$ and $\theta$, this quantity is independent of $\Sigma$ within the stated convex class. \end{theorem}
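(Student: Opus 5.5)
The plan is to run the flow \eqref{eqn:intro-equationflow} with $k=n$, so $F=H_n/H_{n-1}$, starting from $x_0=\Sigma$. Proposition \ref{Wk-Wn} says this flow preserves $W_{n,\theta}$ and makes $W_{l,\theta}$ nondecreasing for $1\leq l\leq n-1$. If we can show that the flow exists for all time and converges smoothly to an umbilical cap $C_{\theta,r_\infty}(E_{n+1})$, then
\begin{equation*}
  f_l(r_\infty)\geq W_{l,\theta}(\widehat\Sigma), \qquad f_n(r_\infty)=W_{n,\theta}(\widehat\Sigma).
\end{equation*}
Since $f_l$ is increasing, this gives $W_{n,\theta}(\widehat\Sigma)=f_n(r_\infty)\geq (f_n\circ f_l^{-1})(W_{l,\theta}(\widehat\Sigma))$. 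For rigidity, suppose equality holds. Then $\frac{d}{dt}W_{l,\theta}\equiv0$ at $t=0$, and the Minkowski-type identity behind Proposition \ref{Wk-Wn} (a Newton--Maclaurin deficit) forces $\Sigma$ to be umbilic, hence a cap.

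The a priori estimates would be done in the following order.

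\emph{(i) $C^0$.} By \eqref{dfec}, every cap $C_{\theta,r}(E_{n+1})$ is a static solution of the flow. The maximum principle, together with the capillary boundary condition (as in the comparison arguments of \cite{Guo1,MW}), then keeps $\widehat{\Sigma_t}$ inside $\widehat{C_{\theta,R}(E_{n+1})}$. A small cap around $E_{n+1}$ inside $\widehat\Sigma$, available because $E_{n+1}\in\operatorname{int}_{\mathcal H}(\widehat{\partial\Sigma})$, stays inside $\widehat{\Sigma_t}$. This is exactly where \eqref{assump} with $R\cos\theta<1$ enters.

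\emph{(ii) Star-shapedness.} We need the uniform positivity of $\bar g(X,\nu)$ with respect to $E_{n+1}$. This follows from convexity together with the inner barrier, and the boundary derivative is handled via $\theta\leq\pi/2$.

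\emph{(iii) Bounds on $F$.} We need upper and lower bounds on $F$ and positivity of $V-\cos\theta\,\bar g(x,\nu)$, obtained by applying the maximum principle to the speed, with Hopf-lemma control at $\partial\Sigma$.

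\emph{(iv) Preservation of convexity.} This is the main point. For $k=n$ the operator $F=H_n/H_{n-1}$ is inverse-concave, which allows a tensor maximum principle for the inverse second fundamental form in the interior. At the boundary we would use the capillary boundary relations on the horosphere: $\mathcal H$ is umbilic with $\kappa=1$, so $\nabla_\mu h(e_\alpha,e_\alpha)$ can be expressed through $h_{\mu\mu}$ and $\cos\theta$. As in \cite{WWX1,WengX}, the sign of $\cos\theta\geq0$ should make the minimum principal curvature direction satisfy the right Hopf inequality; the boundary tensor maximum principle of \cite{HWYZhalf} is the tool I would try first.

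Once uniform curvature pinching is in hand, the equation is uniformly parabolic with oblique (capillary) boundary condition. Krylov--Safonov and Schauder theory then give higher regularity and long-time existence. Convergence to a cap follows from the monotone, bounded quantity $W_{l,\theta}$: its time derivative tends to zero along the flow, so any subsequential limit is umbilic, hence a cap, and the $C^0$ barriers pin down uniqueness of the limit.

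For \eqref{eq:top-order-GB}, I would compute $\frac{d}{dt}W_{n+1,\theta}$ along an arbitrary capillary variation using the first variation formula for capillary quermassintegrals from Section \ref{sec2.4:quermassintergal}. The coefficient of the normal speed should involve $H_{n+1}\equiv0$ together with a boundary term that cancels because the horosphere is umbilic with $\kappa=1$ and the ambient curvature is $-1$, so the derivative vanishes. Convex capillary hypersurfaces in this class can be deformed to a cap through such variations, for instance by the flow above. It then suffices to evaluate \eqref{Wn-1113} on a small cap: as $r\to0$ it approaches a Euclidean spherical cap meeting a hyperplane at angle $\theta$, whose Gauss–Kronecker integral plus boundary contribution gives $\frac{\omega_{n-1}}{n+1}\int_0^\theta\sin^{n-1}s\,ds$.

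I expect the boundary part of the convexity preservation in step (iv) to be the main obstacle.
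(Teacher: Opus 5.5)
Your flow argument is the paper's proof when $\Sigma$ is \emph{strictly} convex. However, the theorem is stated for convex $\Sigma$ in the weak sense $h\geq0$, and there the flow cannot be started. Indeed, $F=H_n/H_{n-1}=n/\sum_i\kappa_i^{-1}$ vanishes where a principal curvature vanishes, so the speed $A_\theta/F$ is undefined, and the short-time existence of Section~\ref{sec4} requires strict convexity. You therefore need strictly convex approximations that keep three things: the $\theta$-capillary condition, the normalization $E_{n+1}\in\operatorname{int}_{\mathcal H}(\widehat{\partial\Sigma})$, and the containment with some $R_2>R$, $R_2\cos\theta<1$.

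For $\theta<\pi/2$ the homotheties $y=E_{n+1}+t(x-E_{n+1})$, $t<1$, work. By \eqref{conformal-relation} the new curvatures are $\frac{1+t(x_{n+1}-1)}{tx_{n+1}}\kappa_i+\frac{1-t}{tx_{n+1}}\langle E_{n+1},\nu_\delta\rangle$, and $\langle E_{n+1},\nu_\delta\rangle\geq\cos\theta>0$. For $\theta=\pi/2$ that last term vanishes on $\partial\Sigma$. The paper instead inverts in the unit sphere about $E_{n+1}$, takes Euclidean parallel hypersurfaces and inverts back. This keeps orthogonality to $\mathcal H$ and gives $\partial_s\kappa_i=2+(\bar\nabla_\nu f)\kappa_i-f\kappa_i^2$ with $f\leq0$, hence strict convexity for small $s>0$. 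The inequality then passes to the limit.

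The homotheties also give $W_{l,\theta}(\widehat\Sigma)=\frac{n+1-l}{n+1}\int_0^1t^{-1}\int_{\Sigma_t}H_l\bar g(X,\nu)\,dA_t\,dt>0$. You need this for $f_l^{-1}(W_{l,\theta}(\widehat\Sigma))$ to be defined, and your outline does not supply it. The same issue affects your Gauss--Bonnet step. Deforming $\Sigma$ to a cap by the flow needs strict convexity, whereas the homotheties toward $E_{n+1}$ are admissible capillary deformations for every convex $\Sigma$. Letting $t\to0$ in \eqref{Wn-1113} leaves only the Euclidean total Gauss curvature, which is the area of the spherical cap of polar angle $\theta$.

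The more serious gap is rigidity. Equality is not inherited through the approximation, and your argument via $\frac{d}{dt}W_{l,\theta}=0$ has no flow to act on when $\Sigma$ is only weakly convex. The paper uses a local variational argument as in \cite{SWX}, in the following steps.
\begin{enumerate}
\item Comparison with the smallest centred cap enclosing $\widehat\Sigma$ shows that $\Sigma_+=\{p\in\operatorname{int}\Sigma: h(p)>0\}$ is nonempty.
\item In the equality case, $W_{n,\theta}-f_n\circ f_l^{-1}(W_{l,\theta})$ attains its minimum $0$ at $\Sigma$ among convexity-preserving variations supported in $\Sigma_+$.
\item Hence \eqref{eqn:lower-order-1vf} gives $H_n=(r^{-1}-\cos\theta)^{n-l}H_l$ on $\Sigma_+$, with $r=f_l^{-1}(W_{l,\theta}(\widehat\Sigma))$.
\item Newton--MacLaurin then gives $H_n\geq(r^{-1}-\cos\theta)^n>0$ there. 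Since the curvatures of the fixed $\Sigma$ are bounded above, this is a uniform lower bound on $\kappa_{\min}$ on $\Sigma_+$.
\item Hence $\Sigma_+$ is closed as well as open, $\Sigma$ is strictly convex, and the flow rigidity applies.
\end{enumerate}

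Your anticipated main obstacle, step (iv), is actually easy for $k=n$. Since $F^{ij}(h^2)_{ij}=F^2$, \eqref{F88} gives $\mathcal LF\geq0$ modulo $\nabla F$, with $\nabla_\mu F=0$ on the boundary. So $\min F$ does not decrease, and $F\leq n\min_i\kappa_i$ bounds the curvatures from below with no tensor maximum principle. What your outline does lack is an upper bound on $H$, because bounds on $F$ do not control $\kappa_{\max}$. The paper obtains this bound from $\nabla_\mu H\leq0$ on $\partial\Sigma_t$ together with the containment condition \eqref{assump}.
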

\begin{remark}
\

\begin{itemize}
\item [(a)]The normalization $E_{n+1}\in\operatorname{int}_{\mathcal H} (\widehat{\partial\Sigma})$ can always be achieved by a horizontal translation.
\item [(b)] The containment condition \eqref{assump} ensures that $V-\cos\theta\bar g(x,\nu)>0$ by Proposition \ref{bounded}, and holds automatically for horo-convex hypersurfaces by Lemma \ref{lemma-horo-umbilical-cap}.
\item [(c)]For $\theta\in(0,\pi/2]$ and convex hypersurface $\Sigma$, the boundary $\partial\Sigma$ is convex in both $\mathcal H$ and $\Sigma$ by Corollary \ref{cor222}. This boundary convexity enters the mean curvature estimate \eqref{ffff}.
\end{itemize}
\end{remark}

In the free boundary case $\theta=\pi/2$, the containment
condition \eqref{assump} holds for all sufficiently large $R$. Theorem \ref{mainthm} therefore gives the following result.

\begin{corollary}
Let $\Sigma$ be a compact convex hypersurface with free boundary in $\mathcal H_+$. Assume that $E_{n+1}\in\operatorname{int}_{\mathcal H}(\widehat{\partial\Sigma})$. Then for $1\leq l\leq n-1$,
\begin{equation*}
  W_{n,\pi/2}(\widehat\Sigma)
  \geq(f_n\circ f_l^{-1})
  \bigl(W_{l,\pi/2}(\widehat\Sigma)\bigr),
\end{equation*}
where the model functions are taken with $\theta=\pi/2$. Equality holds if and only if $\Sigma$ is a free boundary umbilical cap. \end{corollary}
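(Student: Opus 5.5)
The corollary follows from Theorem \ref{mainthm} with $\theta=\pi/2$. So the plan is to check that the hypotheses of Theorem \ref{mainthm} hold automatically in the free boundary case. Convexity and the normalization $E_{n+1}\in\operatorname{int}_{\mathcal H}(\widehat{\partial\Sigma})$ are assumed directly. Since $\cos(\pi/2)=0$, the condition $R\cdot\cos\theta<1$ holds for every $R>0$. It remains only to find some $R>0$ with $\widehat\Sigma\subseteq\widehat{C_{\pi/2,R}(E_{n+1})}$.

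For $\theta=\pi/2$ the model caps are
\[
C_{\pi/2,R}(E_{n+1})=\{x:\ |x-E_{n+1}|=R,\ x_{n+1}\geq 1\}.
\]
These are Euclidean hemispheres centered at $E_{n+1}\in\mathcal H$, and the enclosed domain is the Euclidean half-ball $\{|x-E_{n+1}|\leq R,\ x_{n+1}\geq1\}$. The hypersurface $\Sigma$ is compact and lies in $\overline{\mathcal H_+}$, so $\widehat\Sigma$ is a bounded subset of $\mathbb R^{n+1}$ contained in $\{x_{n+1}\geq1\}$. Taking $R>\sup_{x\in\widehat\Sigma}|x-E_{n+1}|$, which is finite, gives the containment \eqref{assump}.

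Theorem \ref{mainthm} then yields, for $1\leq l\leq n-1$,
\[
W_{n,\pi/2}(\widehat\Sigma)\geq(f_n\circ f_l^{-1})\bigl(W_{l,\pi/2}(\widehat\Sigma)\bigr),
\]
where the model functions $f_k$ are defined on $0<r<\infty$ because $1/\cos(\pi/2)=+\infty$. The equality case also transfers directly: equality holds if and only if $\Sigma$ is an umbilical cap with $\pi/2$-capillary boundary, that is, a free boundary umbilical cap.

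There is no real obstacle here. The one point to watch is that the containment radius $R$ is unrestricted precisely because $\cos\theta=0$. The same argument would fail for $\theta<\pi/2$, where large $R$ violates $R\cos\theta<1$.
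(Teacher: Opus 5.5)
Your proposal is correct and matches the paper's own argument. The paper also observes that for $\theta=\pi/2$ the containment condition \eqref{assump} holds for all sufficiently large $R$, and then applies Theorem \ref{mainthm} directly, including its equality case.
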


When $n=2$, expanding the quermassintegrals in Theorem
\ref{mainthm} gives the following Minkowski type inequality
and the Gauss--Bonnet type formula for the total Gauss curvature.
 
\begin{corollary}
Let $\Sigma$ be a surface in $\mathcal{H}_{+}$ that satisfies the assumptions of Theorem \ref{mainthm}. Then
\begin{eqnarray*}
 2\int_\Sigma H_1dA
\geq 6(f_2\circ f_1^{-1})
  \left(\frac13\bigl(|\Sigma|
  -\cos\theta|\widehat{\partial\Sigma}|\bigr)\right)+2|\widehat\Sigma|-(1+\cos^2\theta)|\widehat{\partial\Sigma}|+\sin\theta\cos\theta|\partial\Sigma|.
\end{eqnarray*}
Equality holds if and only if $\Sigma$ is an umbilical cap
with $\theta$-capillary boundary. Moreover, there holds
\begin{equation*}
  3W_{3,\theta}(\widehat\Sigma)
  =\int_\Sigma H_2dA-|\Sigma|+\sin\theta|\partial\Sigma|
  =2\pi(1-\cos\theta).
\end{equation*}

\end{corollary}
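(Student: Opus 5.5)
The corollary follows by specializing Theorem \ref{mainthm} to $n=2$ and expanding the capillary quermassintegrals $W_{1,\theta},W_{2,\theta},W_{3,\theta}$ from their definitions in Section \ref{sec2.4:quermassintergal}. No new analysis is needed; the work is bookkeeping of boundary terms.

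For $n=2$ and $l=1$, Theorem \ref{mainthm} gives $W_{2,\theta}(\widehat\Sigma)\ge (f_2\circ f_1^{-1})(W_{1,\theta}(\widehat\Sigma))$. The equality characterization is inherited directly.

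Next I write out the definitions with the normalization $W_{1,\theta}=\frac{1}{n+1}(|\Sigma|-\cos\theta|\widehat{\partial\Sigma}|)$, i.e. $W_{1,\theta}=\frac13(|\Sigma|-\cos\theta|\widehat{\partial\Sigma}|)$. The capillary analogue of the recurrence $W_{j+1}=\frac{1}{n+1}\int H_j-\frac{j}{n+2-j}W_{j-1}$ carries an extra boundary term. For $j=1$ it should take the form
\[
W_{2,\theta}=\frac13\Big(\int_\Sigma H_1\,dA-\frac{\sin\theta\cos\theta}{2}|\partial\Sigma|\Big)-\frac{1}{3}W_{0,\theta}+(\text{horosphere correction in }|\widehat{\partial\Sigma}|).
\]
Here $W_{0,\theta}=|\widehat\Sigma|$. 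The correction involving $(1+\cos^2\theta)|\widehat{\partial\Sigma}|$ arises because $\mathcal H$ has principal curvatures equal to one, so the wetting region itself contributes curvature. I would read off the exact coefficients from the definition (formula \eqref{Wn-1113} and its lower-order analogues). Multiplying the inequality by $6$ and rearranging should then give exactly the displayed form, with $2\int H_1$ on the left.

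For the second statement I apply \eqref{eq:top-order-GB} with $n=2$. Since $\omega_1=2\pi$ and $\int_0^\theta\sin s\,ds=1-\cos\theta$, we get $3W_{3,\theta}=2\pi(1-\cos\theta)$. Expanding $W_{3,\theta}$ through the top-order definition (the $j=2$ recurrence, with $W_{1,\theta}$ substituted) gives $3W_{3,\theta}=\int_\Sigma H_2-|\Sigma|+\sin\theta|\partial\Sigma|$. The terms in $|\widehat{\partial\Sigma}|$ should cancel: this is consistent with Gauss–Bonnet, where the Gauss curvature is $H_2-1$ and the geodesic curvature term involves $\sin\theta$ because the boundary curve in the flat horosphere $\mathcal H$ has its total curvature $2\pi$ absorbed.

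The main obstacle is getting the constants right, in particular the coefficient $(1+\cos^2\theta)$ and the sign of $\sin\theta\cos\theta|\partial\Sigma|$. I would check them against an explicit umbilical cap $C_{\theta,r}(E_{3})$, where every quantity is computable.
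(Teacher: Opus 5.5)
Your proposal is correct and follows the paper's own argument: specialize Theorem~\ref{mainthm} to $n=2$, $l=1$, expand the definitions, and evaluate \eqref{eq:top-order-GB} with $\omega_1=2\pi$. The constants you left open come out exactly as you guessed: \eqref{Wk-formula} with $n=2$, $k=1$ gives $6W_{2,\theta}(\widehat\Sigma)=2\int_\Sigma H_1\,dA-\sin\theta\cos\theta|\partial\Sigma|+(1+\cos^2\theta)|\widehat{\partial\Sigma}|-2|\widehat\Sigma|$, while \eqref{Wn-1113} gives $3W_{3,\theta}(\widehat\Sigma)=\int_\Sigma H_2\,dA-\cos\theta|\widehat{\partial\Sigma}|+\sin\theta|\partial\Sigma|-3W_{1,\theta}(\widehat\Sigma)$, and there the $|\widehat{\partial\Sigma}|$ terms cancel against $3W_{1,\theta}(\widehat\Sigma)=|\Sigma|-\cos\theta|\widehat{\partial\Sigma}|$.
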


\medskip 

\noindent\textit{The paper is organized as follows.} In Section \ref{sec2}, we collect the geometric identities and first variation formulae. In Section \ref{sec4}, we construct the locally constrained curvature flow and derive its scalar formulation, basic estimates and evolution equations. Section \ref{curvature-sec} establishes curvature bounds, long-time existence and smooth convergence. Finally, Section \ref{sec5} proves the Alexandrov--Fenchel inequalities for horo-convex and convex hypersurfaces stated in Theorems \ref{mainthm-horo} and \ref{mainthm}, respectively.

\
 
\section{Preliminaries}\label{sec2}

Let $x:(\Sigma,g)\to(\overline{\mathcal H_+},\bar g)$ be an isometric immersion of a compact orientable $n$-manifold with boundary satisfying $x(\operatorname{int}\Sigma)\subset\mathcal H_+$ and $x(\partial\Sigma)\subset\mathcal H$.
This immersion is said to be supported on $\mathcal H$.

We denote by $\bar \n$, $\bar \Delta$ and $\bar \n^2$ the gradient, the Laplacian and the Hessian on $\mathcal{H}_{+}$ w.r.t. $\bar g$ respectively, while by $\n$, $\Delta$ and $\n^2$ the gradient, the Laplacian and the Hessian on $\Sigma$ w.r.t. its induced metric $g$ respectively. We retain the outward unit normal $\nu$ as above. We denote by $\bar N$ the unit outward normal to $\mathcal{H}$ in $\mathcal{H}_{+}$ and $\mu$ be the unit outward normal to $\p \Sigma$ in $\Sigma$.
Let $\bar \nu $ be the unit normal to $\p \Sigma$ in $\mathcal{H}$ such that the bases $\{\nu, \mu\}$ and $\{\bar \nu  , \bar N\}$ have the same orientation in the
normal bundle of $\p \Sigma\subset \mathcal{H}_{+}$. See Figure \ref{figure1} below. To be more precise, in the normal bundle of $\p \Sigma$, the following relations hold:  
 \begin{eqnarray}
&&\mu=\sin \th ~ \bar N+\cos \th~ \bar \nu ,  \label{mu0}
\\&&\nu=-\cos \th ~ \bar N+\sin \th~ \bar \nu  .\label{nu0}
\end{eqnarray}
\begin{figure}[H]
	\centering
\includegraphics[height=7cm,width=12cm]{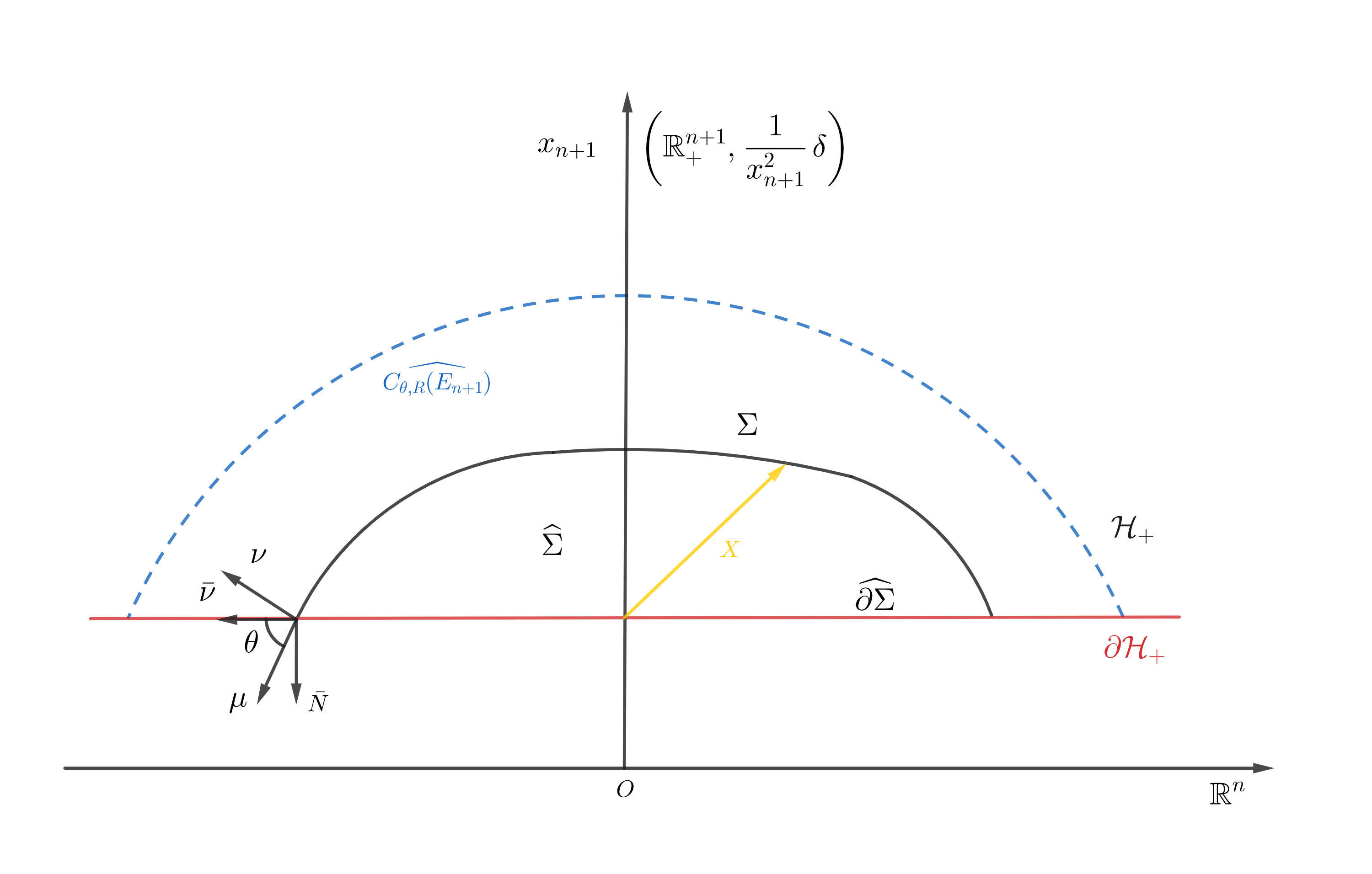}
	\caption{Hypersurface $\Sigma$ with $\theta$-capillary boundary in a horoball $\mathcal{H}_{+}$.} \label{figure1}
\end{figure}

Denote $x$ by the position vector in $\hh^{n+1}$ and $\bar{\nabla}$  the Levi-Civita connection of $\hh^{n+1}$. Let $\{E_i\}_{i=1}^{n+1}$ be the canonical basis of $\rr^{n+1}$.
We use $\<\cdot ,\cdot\>$ and $\bar g$ to denote the inner product of $\rr^{n+1}$ and $\hh^{n+1}$ respectively,  $D$ and $\bar \nabla$ to denote the Levi-Civita connection of $\rr^{n+1}$ and $\hh^{n+1}$ respectively.  Let $\bar E_i = x_{n+1} E_i$. Then $\{ \bar E_i\}_{i=1}^{n+1}$  is an orthonormal basis of $\hh^{n+1}$.
The relationship of $\bar{\nabla}$ and $D$ is given by
\begin{equation*}
  \bar{\nabla}_{Y}Z=D_{Y}Z-Y(\ln x_{n+1})Z-Z(\ln x_{n+1})Y+\langle Y,Z\rangle D(\ln x_{n+1}).
\end{equation*}
It follows that
\begin{eqnarray*}  
\bar \n_Y x &=& -\bar g (Y, \bar E_{n+1})x + \bar g (Y, x ) \bar E_{n+1}, \\ 
\bar \n_Y  E_i &=& -\bar g (Y, \bar E_{n+1}) E_i + \bar g (Y, \bar E_i) E_{n+1}, \quad \forall \,i=1,2, \cdots, n, \\
\bar \n_Y E_{n+1}  &=& - \frac 1{x_{n+1}} Y,
\end{eqnarray*}
for any vector $Y$ in $\hh^{n+1}$, which will be used later.

The following two facts play an important role in showing the Minkowski type formula in $\mathcal{H}_{+}$.
\begin{prop}[{\rm \cite[Proposition 2.3]{GWX}}]\label{lem2.1}
\

$(\rm i)$  The vector field  ~$x$ is a Killing vector field in $\hh^{n+1}$, i.e,
\begin{eqnarray}\label{xKilling}
\frac12\left( \bar g (\bar \n_i x, E_j) + \bar g (\bar \n_j x, E_i) \right ) =0.
\end{eqnarray}

$(\rm {ii})$~$E_{n+1}$ is a conformal Killing vector field in $\hh^{n+1}$, i.e,
\begin{eqnarray*}
\frac12\left (\bar g( \bar \n_i E_{n+1}, E_j)+
\bar g( \bar \n_j E_{n+1}, E_i)
\right)= -\frac{1}{x_{n+1}}\bar{g}_{ij}.
\end{eqnarray*}
Here $\bar \n_i = \bar \n _{E_i}$ and $\bar g_{ij}= \bar g(E_i, E_j)$.
\end{prop}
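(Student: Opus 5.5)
This is a direct computation using the connection formulas displayed just before the proposition. We take $Y=E_i$ in them and then pair the result with $E_j$, using $\bar g(E_i,E_j)=x_{n+1}^{-2}\delta_{ij}$ and $\bar E_{n+1}=x_{n+1}E_{n+1}$.

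For (i), the first formula with $Y=E_i$ gives
\begin{equation*}
\bar\nabla_i x=-\bar g(E_i,\bar E_{n+1})\,x+\bar g(E_i,x)\,\bar E_{n+1}.
\end{equation*}
Pairing with $E_j$ yields
\begin{equation*}
\bar g(\bar\nabla_i x,E_j)=-\bar g(E_i,\bar E_{n+1})\,\bar g(x,E_j)+\bar g(E_i,x)\,\bar g(\bar E_{n+1},E_j).
\end{equation*}
This expression is antisymmetric under $i\leftrightarrow j$: the two products simply exchange roles with opposite signs. Hence its symmetrization vanishes, which is \eqref{xKilling}.

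As a check, one can derive the first formula from the general relation between $\bar\nabla$ and $D$. Take $Z=x$ and use $D_Yx=Y$, $D\ln x_{n+1}=E_{n+1}/x_{n+1}$ and $Y(\ln x_{n+1})=Y_{n+1}/x_{n+1}$. The terms $Y-\langle Y,E_{n+1}\rangle x_{n+1}^{-1}x$ combine into the displayed form after rewriting the Euclidean pairings through $\bar g=x_{n+1}^{-2}\delta$. This step is routine, since the formula is already stated.

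For (ii), the third formula gives $\bar\nabla_iE_{n+1}=-x_{n+1}^{-1}E_i$, so $\bar g(\bar\nabla_iE_{n+1},E_j)=-x_{n+1}^{-1}\bar g_{ij}$. This is already symmetric in $i,j$, so averaging it with its transpose gives the claim.

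No step here is a real obstacle. The only care needed is bookkeeping between Euclidean and hyperbolic pairings, namely $\bar g(E_i,\bar E_{n+1})=x_{n+1}^{-1}\delta_{i,n+1}$. If one did not want to rely on the displayed connection formulas, one could instead verify (i) by noting that $x$ generates the dilations $x\mapsto e^sx$. These are isometries of $\bar g=\delta/x_{n+1}^2$, so $x$ is Killing.
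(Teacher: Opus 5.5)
Your computation is correct and is the intended verification: the paper gives no proof of its own (it cites GWX), but the connection formulas for $\bar\nabla_Y x$ and $\bar\nabla_Y E_{n+1}$ displayed just before the statement give (i) by antisymmetry of $\bar g(\bar\nabla_i x,E_j)$ and (ii) from $\bar\nabla_iE_{n+1}=-x_{n+1}^{-1}E_i$, exactly as you argue. One small slip in your optional check: the $Y$ coming from $D_Yx$ does not combine with anything but is cancelled by $-x(\ln x_{n+1})\,Y=-Y$, leaving $-x_{n+1}^{-1}Y_{n+1}\,x+x_{n+1}^{-1}\langle Y,x\rangle E_{n+1}$, which is the displayed formula.
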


For $1 \leq k \leq n$. Let $\sigma_{k}$ be the $k$-th elementary symmetric polynomial evaluated at the principal curvatures of $\Sigma$ and $H_{k}=\binom{n}{k}^{-1} \sigma_{k}$ for $1 \leq k \leq n$. We make the convention that $H_{0}=1$ and $H_{n+1}=0$.

We denote $\sigma_k(\kappa)$ with $\kappa_i$ omitted by $\sigma_k(\kappa|i)$, so that $\partial\sigma_{k+1}/\partial\kappa_i=\sigma_k(\kappa|i)$.
Set $\sigma_k^{ij}=\partial\sigma_k/\partial h_{ij}$,
the $(k-1)$-st Newton tensor. We recall the following basic identities.
\begin{prop}\label{formula2}For all $1 \leq i \leq n$, then
\begin{itemize}
 \item [(1)]$\sigma_{k}^{ij}$ is divergence-free, i.e., $\nabla_{i}\sigma_{k}^{ij}=0$;
\item [(2)]$\sigma_{k}(\kappa)=\sigma_{k}(\kappa |i)+\kappa_{i} \sigma_{k-1}(\kappa |i)$;
\item [(3)]$\sum_{i=1}^n\sigma_k(\kappa|i)=(n-k)\sigma_k(\kappa)$;
\item [(4)] $\sum_{i=1}^{n}\kappa_{i}\sigma_{k-1}(\kappa|i)=k\sigma_{k}(\kappa)$;
\item [(5)]$\sum_{i=1}^{n} \kappa_{i}^{2}\sigma_{k-1}(\kappa|i)=\sigma_{1}(\kappa)\sigma_{k}(\kappa)-(k+1) \sigma_{k+1}(\kappa)$.
\end{itemize}
\end{prop}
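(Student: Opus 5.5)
The statement to prove is Proposition~\ref{formula2}, a collection of standard identities for elementary symmetric polynomials and Newton tensors. I would handle items (2)--(5) purely algebraically, working at a point in a frame where $(h_{ij})$ is diagonal with eigenvalues $\kappa_1,\dots,\kappa_n$. Item (1) is the only genuinely geometric one.

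\emph{Items (2)--(5).} For (2), split the monomials of $\sigma_k(\kappa)$ according to whether they contain $\kappa_i$. Those without $\kappa_i$ sum to $\sigma_k(\kappa|i)$; those with $\kappa_i$ sum to $\kappa_i\sigma_{k-1}(\kappa|i)$. For (3), a fixed monomial of $\sigma_k$ appears in $\sigma_k(\kappa|i)$ exactly when $i$ is not among its $k$ indices. That happens for $n-k$ values of $i$, which gives $(n-k)\sigma_k$. For (4), sum (2) over $i$ and use (3):
\[
\sum_i\kappa_i\sigma_{k-1}(\kappa|i)=n\sigma_k-(n-k)\sigma_k=k\sigma_k .
\]
Alternatively, (4) is Euler's relation for the degree-$k$ homogeneous polynomial $\sigma_k$, since $\partial\sigma_k/\partial\kappa_i=\sigma_{k-1}(\kappa|i)$. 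For (5), multiply (2) at level $k+1$ by nothing new. Instead write $\kappa_i\sigma_{k-1}(\kappa|i)=\sigma_k(\kappa)-\sigma_k(\kappa|i)$, so that
\[
\sum_i\kappa_i^2\sigma_{k-1}(\kappa|i)=\sigma_1\sigma_k-\sum_i\kappa_i\sigma_k(\kappa|i).
\]
Then apply (4) with $k$ replaced by $k+1$ to get $(k+1)\sigma_{k+1}$.

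\emph{Item (1).} This is the classical divergence-free property of Newton tensors, and it requires the ambient space to have constant curvature. Here the ambient is $\mathbb H^{n+1}$, so it holds. The plan is as follows.

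\begin{enumerate}
\item Write
\[
\sigma_k^{ij}=\frac{1}{(k-1)!}\,\delta^{i i_1\cdots i_{k-1}}_{j j_1\cdots j_{k-1}}\,h_{i_1}^{j_1}\cdots h_{i_{k-1}}^{j_{k-1}},
\]
using the generalized Kronecker delta.
\item Differentiate. The divergence $\nabla_i\sigma_k^{ij}$ becomes a sum of terms of the form
\[
\delta^{i i_1\cdots}_{j j_1\cdots}\,\nabla_i h_{i_1}^{j_1}\,(\cdots).
\]
\item By the Codazzi equation in a space form, $\nabla_i h_{i_1 j_1}$ is symmetric in $i$ and $i_1$.
\item Contracting this symmetric expression against the delta, which is antisymmetric in the upper indices $i,i_1$, gives zero.
\end{enumerate}

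The only point needing care is the Codazzi symmetry. Since $\bar R(\cdot,\cdot,\cdot,\nu)=0$ in hyperbolic space, full symmetry of $\nabla h$ holds, and this is the step I would state explicitly. Everything else is bookkeeping.
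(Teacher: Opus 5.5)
Your proposal is correct and matches what the paper relies on. The paper gives no proof of its own: it cites Spruck's Lemma 2.14 for (2)--(5) and the Codazzi equation, as in Spruck's Corollary 1.7, for (1), which are exactly your monomial-counting identities and your Codazzi-plus-antisymmetry argument. The phrase ``multiply (2) at level $k+1$ by nothing new'' is confusing and should simply be deleted; the computation after it is right.
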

Denote $$\Gamma_{+}\coloneqq \left\{\kappa \in \mathbb{R}^{n}: \kappa_{i}>0,1 \leq i \leq n\right\}.$$ % be a G{\aa}rding cone.
\begin{prop}\label{Hk} For $1 \leq l<k \leq n$ and $\kappa \in \Gamma_{+}$, then
\begin{equation}\label{Newton}
  H_{l} H_{k-1} \geq H_{l-1} H_{k},
\end{equation}
with equality if and only if $\kappa=c(1,\ldots,1)$ for some $c>0$. 
\end{prop}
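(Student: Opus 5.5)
The plan is to reduce \eqref{Newton} to the classical Newton--Maclaurin inequality for consecutive indices and then chain these over the indices $l,l+1,\ldots,k-1$. First I will establish the basic Newton inequality: for any $\kappa\in\mathbb R^n$ and $1\le j\le n-1$,
\begin{equation*}
H_j^2\geq H_{j-1}H_{j+1}.
\end{equation*}
To prove it, I take the polynomial $p(t)=\prod_{i=1}^n(t+\kappa_i)=\sum_{j}\binom{n}{j}H_jt^{n-j}$, which has only real roots. By Rolle's theorem, repeated differentiation and the reflection $t\mapsto 1/t$ preserve real-rootedness. This reduces the claim to a real-rooted quadratic $H_{j-1}t^2+2H_jt+H_{j+1}$, whose nonnegative discriminant is exactly the inequality above. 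For equality, I track the multiplicity of roots: equality forces the reduced quadratic to have a double root, and I will trace this back to conclude that all $\kappa_i$ coincide when $\kappa\in\Gamma_+$. I expect this equality characterization to be the step requiring the most care. If the Rolle-based tracking becomes messy, I will instead fall back on the standard references (Hardy--Littlewood--P\'olya), which treat the positive case with equality exactly when all entries are equal.

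Next, for $\kappa\in\Gamma_+$ every $H_j>0$, so the inequality above can be rewritten as
\begin{equation*}
\frac{H_j}{H_{j-1}}\geq\frac{H_{j+1}}{H_j}.
\end{equation*}
Thus the sequence $j\mapsto H_j/H_{j-1}$ is nonincreasing for $1\le j\le n$. Given $1\le l<k\le n$, we have $l\le k-1$. Monotonicity then yields $H_l/H_{l-1}\ge H_{k}/H_{k-1}$, since I can chain from index $l$ up to index $k$ through the consecutive ratios. Cross-multiplying by the positive quantity $H_{l-1}H_{k-1}$ gives \eqref{Newton}.

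For equality in \eqref{Newton}, every intermediate inequality in the chain must be an equality. In particular $H_l^2=H_{l-1}H_{l+1}$ must hold when $l+1\le k$, that is, when $l\le n-1$, which is always the case here. By the equality case of the basic Newton inequality in $\Gamma_+$, this forces $\kappa=c(1,\ldots,1)$, and positivity gives $c>0$. Conversely, if $\kappa=c(1,\ldots,1)$ then $H_j=c^j$ for every $j$, and \eqref{Newton} holds with equality.
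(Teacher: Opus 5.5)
Your proposal is correct. Newton's inequality $H_j^2\ge H_{j-1}H_{j+1}$ via real-rootedness of $\prod_i(t+\kappa_i)$, then chaining the nonincreasing ratios $H_j/H_{j-1}$ from $j=l$ to $j=k$ and reading off the equality case from the first link $H_l^2=H_{l-1}H_{l+1}$, is the classical proof, and the paper proves nothing here beyond citing Spruck's notes for this standard fact. Your Rolle-based equality tracking does close in $\Gamma_+$: every intermediate polynomial has positive coefficients, so it has no roots at $0$ or $\infty$. A root of multiplicity $\mu\ge 2$ of the derivative of a real-rooted polynomial is a root of multiplicity $\mu+1$ of the polynomial itself, so a double root of the final quadratic lifts to an $n$-fold root of $\prod_i(t+\kappa_i)$, which gives $\kappa=c(1,\ldots,1)$.
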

\begin{prop}\label{garding-cone}
For $1\leq k\leq n$, the function
$F=H_k/H_{k-1}$ is smooth, positive, strictly increasing in each
principal curvature, and concave on $\Gamma_+$.
\end{prop}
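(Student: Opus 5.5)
Smoothness and positivity of $F=H_k/H_{k-1}=\binom{n}{k}^{-1}\binom{n}{k-1}\,\sigma_k/\sigma_{k-1}$ on $\Gamma_+$ are immediate: $\sigma_j(\kappa)>0$ for every $\kappa\in\Gamma_+$ and $0\le j\le n$, so $F$ is a quotient of polynomials with nonvanishing denominator. Since $F$ is a symmetric function of $\kappa$, it also defines a smooth function of $(h_{ij})$ with eigenvalues in $\Gamma_+$. Hence only monotonicity and concavity need proof, and the constant $\binom{n}{k}^{-1}\binom{n}{k-1}>0$ plays no role, so we work with $q=\sigma_k/\sigma_{k-1}$.

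For strict monotonicity we differentiate in $\kappa_i$ and use Proposition \ref{formula2}(2) twice. Write $\sigma_k=\sigma_k(\kappa|i)+\kappa_i\sigma_{k-1}(\kappa|i)$ and $\sigma_{k-1}=\sigma_{k-1}(\kappa|i)+\kappa_i\sigma_{k-2}(\kappa|i)$. The $\kappa_i$-terms cancel in
\[
\partial_i q=\frac{\sigma_{k-1}(\kappa|i)\sigma_{k-1}-\sigma_k\sigma_{k-2}(\kappa|i)}{\sigma_{k-1}^2}
=\frac{\sigma_{k-1}(\kappa|i)^2-\sigma_k(\kappa|i)\sigma_{k-2}(\kappa|i)}{\sigma_{k-1}^2}.
\]
The numerator is positive on $\Gamma_+$ by Newton's inequality applied to the $(n-1)$-tuple $\kappa|i$. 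The normalized form is $\tilde H_{k-1}^2\ge \tilde H_k\tilde H_{k-2}$; converting back to unnormalized $\sigma$'s gives a strict factor $>1$ from the binomial coefficients. When $k=n$ we have $\sigma_n(\kappa|i)=0$, and positivity is trivial. When $k=1$, $q=\sigma_1$ and everything is trivial.

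For concavity we use the classical route of Marcus--Lopes, which is the main step. We prove that $q$ is concave on $\Gamma_+$ by induction on $k$. The statement is homogeneous of degree one, so concavity is equivalent to superadditivity, $q(\kappa+\eta)\ge q(\kappa)+q(\eta)$. We use the recursion from Proposition \ref{formula2}, in the form
\[
\frac{\sigma_k}{\sigma_{k-1}}(\kappa)=\frac1{n-k+1}\Bigl(\sigma_1(\kappa)-\sum_i\frac{\sigma_{k-1}(\kappa|i)}{\sigma_{k-2}(\kappa|i)}\cdot(\text{weight})\Bigr).
\]
The cleaner identity is $\sigma_k/\sigma_{k-1}=\kappa_i\cdot$ (mixing) and is obtained from (2) and (3): $\sigma_{k}/\sigma_{k-1}=\frac{1}{n-k+1}\sum_i\bigl(\kappa_i - \kappa_i^2\sigma_{k-2}(\kappa|i)/\sigma_{k-1}\bigr)$. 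Alternatively, and more robustly, we check concavity directly from the Hessian. Restricting $q$ to a line $\kappa+t\xi$ gives $\sigma_k,\sigma_{k-1}$ as polynomials in $t$ with only real roots, since they are hyperbolic polynomials in the Gårding sense. Gårding's theory then implies that $\sigma_k^{1/k}$ and the quotient of consecutive hyperbolic polynomials, being the derivative relation $\sigma_{k-1}\propto D_{\mathbf 1}\sigma_k$, yield concavity of $\sigma_k/\sigma_{k-1}$. Concretely, for $p$ hyperbolic with respect to $e=(1,\dots,1)$, $p/D_ep$ is concave on the hyperbolicity cone (a theorem of Gårding/Bauschke et al.), and $D_e\sigma_k=(n-k+1)\sigma_{k-1}$.

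Finally, concavity in $\kappa$ transfers to concavity of $F$ as a function of the symmetric matrix $(h_{ij})$ by the standard result of Ball/Caffarelli--Nirenberg--Spruck for symmetric concave functions of eigenvalues. The obstacle is thus only the concavity step, which we settle by citing the hyperbolic polynomial result.
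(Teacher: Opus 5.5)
Your proposal is correct. It differs from the paper mainly in what is proved rather than cited: the paper gives no argument of its own and refers smoothness, positivity, monotonicity and concavity to Spruck's notes (Proposition 2.7, Theorems 2.11 and 2.16).

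\textbf{Monotonicity.} You prove strict monotonicity directly. The cancellation giving $\partial_i(\sigma_k/\sigma_{k-1})=\bigl(\sigma_{k-1}(\kappa|i)^2-\sigma_k(\kappa|i)\sigma_{k-2}(\kappa|i)\bigr)/\sigma_{k-1}^2$ is right. Newton's inequality for the $(n-1)$-tuple $\kappa|i$, whose binomial factor exceeds one, then gives strict positivity. Your separate treatment of $k=1$ and $k=n$ covers the remaining cases. As a bonus, you get an explicit formula for $\dot F^i$.

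\textbf{Concavity.} Here you use hyperbolic polynomials rather than elementary symmetric-function arguments. To make the citation precise, write $\sigma_k(x+te)=\binom nk\prod_{j=1}^k(t+\lambda_j(x))$. Then $\sigma_k/D_e\sigma_k=\bigl(\sum_j\lambda_j^{-1}\bigr)^{-1}$, and you can apply the Bauschke--G\"uler--Lewis--Sendov theorem (symmetric convex functions of the hyperbolic eigenvalues are convex) to minus the harmonic mean. This gives concavity on the whole G{\aa}rding cone $\Gamma_k\supset\Gamma_+$ with a transparent reason behind it, at the price of invoking a deeper theorem than is needed.

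\textbf{Abandoned Marcus--Lopes lines.} These should be deleted or repaired before the proof is written up. The first display is incomplete (it contains a placeholder ``weight''). The ``cleaner identity'' has the wrong constant: its right-hand side actually equals $\frac{k}{n-k+1}\,\sigma_k/\sigma_{k-1}$, so it is false unless $2k=n+1$. It also relies on Proposition~\ref{formula2}(5), not on (2) and (3). The correct version, for $2\le k\le n$, is
\[
\frac{\sigma_k}{\sigma_{k-1}}=\frac1k\sum_i\Bigl(\kappa_i-\frac{\kappa_i^2\,\sigma_{k-2}(\kappa|i)}{\sigma_{k-1}}\Bigr)=\frac1k\sum_i\frac{\kappa_i\,E_{k-1}(\kappa|i)}{\kappa_i+E_{k-1}(\kappa|i)},\qquad E_{k-1}=\frac{\sigma_{k-1}}{\sigma_{k-2}}.
\]
The map $(a,b)\mapsto ab/(a+b)$ is concave and nondecreasing on the positive quadrant. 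Hence the second expression closes an induction on $k$, taken over all numbers of variables simultaneously, and proves concavity on $\Gamma_+$ with no hyperbolic-polynomial input. That would make your whole argument self-contained.
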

The algebraic identities in Proposition \ref{formula2} follow
from \cite[Lemma 2.14]{Spruck}; the divergence-free property of
the Newton tensor follows from the Codazzi equation, as in
\cite[Corollary 1.7]{Spruck}.
Propositions \ref{Hk} and \ref{garding-cone} follow from
\cite[Proposition 2.7 and Theorems 2.11 and 2.16]{Spruck}.

\begin{prop}\label{lem:shifted-inverse}
If $\kappa_{i}>1$ for every $1 \leq i \leq n$ and $B\coloneqq (B_{ij})$ is a real symmetric matrix, then
\begin{equation*}%\label{eq:inverse-matrix}
 F^{ij,kl}B_{ij}B_{kl}
 +2\sum_i\sum_{\kappa_p>1}\frac{\dot{F}^i}{\kappa_p-1}B_{ip}^2
 \ge\frac{2}{F-1}\left(\sum_i\dot{F}^iB_{ii}\right)^2,
\end{equation*}
where $\dot{F}^i(\kappa)\coloneqq \partial F/\partial\kappa_i$ and $F^{ij,kl}$ denotes the
second derivative of the spectral matrix function at a diagonal matrix.
\end{prop}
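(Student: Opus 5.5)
The plan is to reduce the inequality to Andrews' characterization of inverse-concave curvature functions, applied to a \emph{shifted} function. Set $\lambda_i\coloneqq\kappa_i-1>0$ and define
\begin{equation*}
  G(\lambda)\coloneqq F(\lambda+(1,\ldots,1))-1,\qquad F=\frac{H_k}{H_{k-1}}.
\end{equation*}
Since the shift is a translation of the argument, $\dot G^i(\lambda)=\dot F^i(\kappa)$ and $G^{ij,kl}=F^{ij,kl}$. Therefore the claimed inequality is literally
\begin{equation*}
  G^{ij,kl}B_{ij}B_{kl}+2\sum_i\sum_{\lambda_p>0}\frac{\dot G^i}{\lambda_p}B_{ip}^2\ \ge\ \frac{2}{G}\Bigl(\sum_i\dot G^iB_{ii}\Bigr)^2 .
\end{equation*}
This is exactly the condition that $G$ is inverse-concave on $\Gamma_+$, i.e. that $G_*(\tau)\coloneqq 1/G(\tau_1^{-1},\ldots,\tau_n^{-1})$ is concave. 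This follows Andrews' computation, which uses only positivity and monotonicity of $G$, not homogeneity.

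The first step is positivity. By Proposition \ref{garding-cone}, $F$ is increasing in each $\kappa_i$. It is also $1$-homogeneous with $F(1,\ldots,1)=1$. Hence $\kappa_i>1$ gives $F(\kappa)>F(1,\ldots,1)=1$, so $G>0$ and $\dot G^i>0$.

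The second step is to split the quadratic form. Use the standard formula for second derivatives of spectral functions,
\begin{equation*}
  F^{ij,kl}B_{ij}B_{kl}=\sum_{i,j}\ddot F^{ij}B_{ii}B_{jj}+\sum_{i\neq j}\frac{\dot F^i-\dot F^j}{\kappa_i-\kappa_j}B_{ij}^2 ,
\end{equation*}
interpreted as a limit when $\kappa_i=\kappa_j$. The inequality then separates into two parts.
\begin{itemize}
\item[(a)] The off-diagonal part requires, for $i\ne j$,
\begin{equation*}
  \frac{\dot G^i-\dot G^j}{\lambda_i-\lambda_j}+\frac{\dot G^i}{\lambda_j}+\frac{\dot G^j}{\lambda_i}\ \ge\ 0 .
\end{equation*}
This reduces to $\lambda_i\dot G^i\ge\lambda_j\dot G^j$ when, say, $\lambda_i>\lambda_j$. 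That in turn amounts to $\lambda\mapsto\lambda_i\dot G^i$ being ordered like $\lambda_i$.
\item[(b)] The diagonal part is the concavity of the scalar function $G_*$ on $\Gamma_+$.
\end{itemize}

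The third step, proving (a) and (b), is the main obstacle. I would first expand $\sigma_m(\lambda+\mathbf 1)=\sum_{j\le m}\binom{n-j}{m-j}\sigma_j(\lambda)$, which in normalized form reads
\begin{equation*}
  H_m(\lambda+\mathbf 1)=\sum_{j=0}^m\binom{m}{j}H_j(\lambda).
\end{equation*}
This writes
\begin{equation*}
  G=\frac{\sum_{j}\bigl[\binom{k}{j}-\binom{k-1}{j}\bigr]H_j(\lambda)}{\sum_j\binom{k-1}{j}H_j(\lambda)}=\frac{\sum_{j\ge1}\binom{k-1}{j-1}H_j(\lambda)}{\sum_{j\ge0}\binom{k-1}{j}H_j(\lambda)} .
\end{equation*}
In the variables $\tau=1/\lambda$ one has $H_j(\lambda)=H_{n-j}(\tau)/H_n(\tau)$, so
\begin{equation*}
  G_*(\tau)=\frac{\sum_j\binom{k-1}{j}H_{n-j}(\tau)}{\sum_j\binom{k-1}{j-1}H_{n-j}(\tau)} .
\end{equation*}
The goal is to show this ratio of positive combinations of consecutive $H_m$'s is concave on $\Gamma_+$. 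I would try to write it as a sum of terms of the form $H_{m}/H_{m-1}$, or as a telescoping combination of such quotients, each of which is concave by Proposition \ref{garding-cone}. The Newton--Maclaurin inequality \eqref{Newton} would handle the cross terms. If this direct route fails, the fallback is to cite the known inverse-concavity of the shifted quotient used by Hu--Li--Wei \cite{HLW} in the horo-convex setting. The off-diagonal monotonicity in (a) should follow from the same representation, since $\lambda_i\dot G^i$ is increasing in $\lambda_i$ for such ratios.
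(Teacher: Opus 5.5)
Your route is the paper's. It too sets $G(\lambda)=F(1+\lambda_1,\ldots,1+\lambda_n)-1$, asserts that $G$ is positive and inverse-concave, and obtains the inequality by applying \cite[Lemma 2.2]{WWZ} and \cite[Section 2]{AndrewsMcCoyZheng} to $G$. Your step (a), however, has an algebraic slip that points you at a false statement. Multiplying $\frac{\dot G^i-\dot G^j}{\lambda_i-\lambda_j}+\frac{\dot G^i}{\lambda_j}+\frac{\dot G^j}{\lambda_i}$ by $\lambda_i\lambda_j(\lambda_i-\lambda_j)$ gives $\lambda_i^2\dot G^i-\lambda_j^2\dot G^j$, not $\lambda_i\dot G^i-\lambda_j\dot G^j$. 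The monotonicity of $\lambda_i\dot G^i$ that you propose to prove is false. For $n=k=2$ at $\kappa=(4,3)$, i.e.\ $\lambda=(3,2)$, one has $\dot G^1=18/49$ and $\dot G^2=32/49$, so $\lambda_1\dot G^1=54/49<64/49=\lambda_2\dot G^2$. More generally, for $k=n$ one has $\lambda_i\dot G^i=\frac{F^2}{n}\frac{\lambda_i}{(1+\lambda_i)^2}$, which decreases once $\lambda_i>1$.

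The correct off-diagonal condition, $(\lambda_i^2\dot G^i-\lambda_j^2\dot G^j)(\lambda_i-\lambda_j)\ge0$, is not an independent requirement. Since $\partial G_*/\partial\tau_i=G^{-2}\lambda_i^2\dot G^i$ and $\tau_i=1/\lambda_i$, it is exactly the inequality $(\partial G_*/\partial\tau_i-\partial G_*/\partial\tau_j)(\tau_i-\tau_j)\le0$. Every symmetric concave function satisfies this: apply the tangent-plane inequality at $\tau$ to the point obtained by swapping $\tau_i$ and $\tau_j$. So (a) follows from (b), and the whole statement reduces to concavity of $G_*$, i.e.\ inverse-concavity of $G$.

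Your direct attack on (b) is only a heuristic. A ratio of binomially weighted sums of the $H_m$ is not a sum or telescoping combination of quotients $H_m/H_{m-1}$, and you give no mechanism by which Newton--Maclaurin would control the cross terms. In the end you therefore rest, as the paper does, on the known inverse-concavity of the shifted quotient. With (a) corrected in this way, your argument coincides with the paper's.
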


\begin{proof}
Due to $\kappa_{i}>1$, define %we can define a positive and inverse-concave function by
\begin{equation*}%\label{eq:G-a}
 G(\lambda)\coloneqq F(1+\lambda_1,\ldots,1+\lambda_n)-1,
 \qquad \lambda\in \Gamma_+,
\end{equation*} which is a positive and inverse-concave function. From \cite[lemma 2.2]{WWZ} and \cite[Section 2]{AndrewsMcCoyZheng} for $G$, the assertion follows.

\end{proof}

\subsection{Basic properties for capillary hypersurfaces in horoball}\

Let $\Sigma$ be a smooth, properly embedded hypersurface with $\theta$-capillary boundary, parametrized by $x:\overline{\mathbb B}^{n}\to\overline{\mathcal H_+}$. We keep the normal vector field as above and the relations \eqref{mu0}--\eqref{nu0} to compare the boundary curvatures. The boundary $\partial \Sigma$ can be viewed as a smooth closed hypersurface in the horosphere $\mathcal{H}$, which bounds $\widehat{\partial \Sigma}$ inside $\mathcal{H}$.

The second fundamental form of $\partial \Sigma$ in $\mathcal{H}$ is given by
\begin{equation*}
\widehat{h}(X, Y)\coloneqq -\bar{g}(\nabla_{X}^{\mathcal{H}} Y, \bar{\nu})=-\bar{g}(\bar{\nabla}_{X} Y, \bar{\nu}), \quad X, Y \in T(\partial \Sigma).
\end{equation*}
The second equality holds since $\bar{g}(\bar{\nu}, \bar N \circ x)=0$. 

The second fundamental form of $\partial \Sigma$ in $\Sigma$ is given by
\begin{equation*}
\tilde{h}(X, Y)\coloneqq -\bar{g}(\nabla_{X} Y, \mu)=-\bar{g}(\bar{\nabla}_{X} Y, \mu), \quad X, Y \in T(\partial \Sigma).
\end{equation*}
The second equality holds since $\bar{g}(\nu, \mu)=0$.
\begin{prop}\label{formula11}
Let $\Sigma\subseteq\mathcal{H}_{+}$ be a hypersurface with $\theta$-capillary boundary and $\{e_{\alpha}\}_{\alpha=1}^{n-1}$ be
an orthonormal frame of $\partial\Sigma$. Then along $\partial\Sigma$,
\begin{itemize}
  \item [(1)]$\mu$ is a principal direction of $\Sigma$, that is, $h\left(\mu, e_{\alpha}\right)=0$;
 \item [(2)]$h_{\alpha \beta}=\sin \theta \widehat{h}_{\alpha \beta}-\cos \theta \delta_{\alpha \beta}$;
 \item [(3)]$\tilde{h}_{\alpha \beta}=\cos \theta \widehat{h}_{\alpha \beta}+\sin \theta \delta_{\alpha \beta}=\frac{1}{\sin \theta} \delta_{\alpha \beta}+\cot \theta h_{\alpha \beta}$;
\item [(4)]$h_{\alpha \beta ; \mu}=\tilde{h}_{\beta \gamma}\left(h_{\mu \mu} \delta_{\alpha \gamma}-h_{\alpha \gamma}\right)$.
\end{itemize}
\end{prop}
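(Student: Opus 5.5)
We prove the four items in the order (1), (2), (3), (4), using only the decompositions \eqref{mu0}--\eqref{nu0}, the fact that the horosphere $\mathcal H$ is totally umbilical with all principal curvatures $1$ with respect to $\bar N$, and the Codazzi equation in the space form $\hh^{n+1}$.

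\textbf{Step 1: the capillary condition and (1).} Differentiate \eqref{angle} along a tangent direction $e_\alpha$ of $\partial\Sigma$. Since $\bar g(\nu,\bar N)=-\cos\theta$ is constant,
\[
0=\bar g(\bar\nabla_{e_\alpha}\nu,\bar N)+\bar g(\nu,\bar\nabla_{e_\alpha}\bar N).
\]
Umbilicity of $\mathcal H$ with unit principal curvatures gives $\bar\nabla_{e_\alpha}\bar N=e_\alpha$ (with the sign convention that makes the curvatures equal to $+1$). Hence the second term vanishes, because $\nu\perp e_\alpha$. The first term is $h(e_\alpha,e_\beta)\bar g(e_\beta,\bar N)+h(e_\alpha,\mu)\bar g(\mu,\bar N)=h(e_\alpha,\mu)\sin\theta$, using $e_\beta\perp\bar N$ and $\bar g(\mu,\bar N)=\sin\theta$ from \eqref{mu0}. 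Since $\sin\theta\neq0$, we get $h(\mu,e_\alpha)=0$.

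\textbf{Step 2: (2) and (3).} For $X,Y\in T(\partial\Sigma)$ we have $h(X,Y)=-\bar g(\bar\nabla_XY,\nu)$. Insert \eqref{nu0}: $-\bar g(\bar\nabla_XY,\nu)=\cos\theta\,\bar g(\bar\nabla_XY,\bar N)-\sin\theta\,\bar g(\bar\nabla_XY,\bar\nu)$. Here $\bar g(\bar\nabla_XY,\bar N)=-\bar g(Y,\bar\nabla_X\bar N)=-\delta_{\alpha\beta}$ and $-\bar g(\bar\nabla_XY,\bar\nu)=\widehat h$. This yields $h_{\alpha\beta}=\sin\theta\,\widehat h_{\alpha\beta}-\cos\theta\,\delta_{\alpha\beta}$. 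The same computation with \eqref{mu0} gives $\tilde h_{\alpha\beta}=\cos\theta\,\widehat h_{\alpha\beta}+\sin\theta\,\delta_{\alpha\beta}$. Eliminating $\widehat h$ through (2) gives $\tilde h=\frac{1}{\sin\theta}\delta+\cot\theta\, h$, since $\cot\theta\cos\theta+\sin\theta=1/\sin\theta$. The one point to check carefully is the sign convention for $\bar\nabla\bar N$ relative to the outward normal $\bar N=-E_{n+1}$. I would verify it directly with the connection formula $\bar\nabla_YE_{n+1}=-Y/x_{n+1}$ at $x_{n+1}=1$, which gives $\bar\nabla_Y\bar N=Y$.

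\textbf{Step 3: (4).} Differentiate the identity $h(e_\alpha,\mu)=0$ from (1) along $e_\beta$ on $\partial\Sigma$:
\[
0=h_{\alpha\mu;\beta}+h(\nabla_{e_\beta}e_\alpha,\mu)+h(e_\alpha,\nabla_{e_\beta}\mu).
\]
Write $\nabla_{e_\beta}e_\alpha=\nabla^{\partial\Sigma}_{e_\beta}e_\alpha-\tilde h_{\alpha\beta}\mu$ and $\nabla_{e_\beta}\mu=\tilde h_{\beta\gamma}e_\gamma$. Working in normal coordinates on $\partial\Sigma$ and using (1) again, this becomes $h_{\alpha\mu;\beta}=\tilde h_{\alpha\beta}h_{\mu\mu}-\tilde h_{\beta\gamma}h_{\alpha\gamma}$. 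The Codazzi equation in $\hh^{n+1}$ (constant curvature, so no ambient curvature term) gives $h_{\alpha\beta;\mu}=h_{\alpha\mu;\beta}$, which yields (4). The main obstacle is the bookkeeping of signs and conventions in this step, in particular making sure the covariant derivative used in (4) is the full derivative on $\Sigma$ and that the $\tilde h$ terms come with consistent signs. I would cross-check the final formula on an umbilical cap $C_{\theta,r}$, where both sides vanish because $h$ is a multiple of the identity.
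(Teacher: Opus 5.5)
Your proposal is correct and follows the paper's proof essentially step by step. For (2)--(3) you decompose $\bar\nabla_{e_\alpha}e_\beta$ along $\bar\nu,\bar N$ using $\bar\nabla_{e_\alpha}\bar N=e_\alpha$ and \eqref{mu0}--\eqref{nu0}, and for (4) you differentiate $h(\mu,e_\alpha)=0$ along $e_\beta$ and apply Codazzi, exactly as the paper does. The only difference is (1): the paper cites \cite[Proposition 2.1]{WX2}, whereas you give the standard direct argument (differentiate the contact-angle condition along $\partial\Sigma$ and use umbilicity of $\mathcal H$).
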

\begin{proof}
The first assertion (1) follows from \cite[Proposition 2.1]{WX2}. For (2), it follows from
\begin{equation*}
  h_{\alpha \beta}=-\bar{g}(\bar{\nabla}_{e_{\alpha}} e_{\beta}, \nu)=\bar{g}(\widehat{h}_{\alpha \beta} \bar{\nu}+\delta_{\alpha \beta} \bar{N}, \nu)=\sin \theta \widehat{h}_{\alpha \beta}-\cos \theta \delta_{\alpha \beta}.
\end{equation*}
Similarly for (3), there holds
\begin{equation*}
  \tilde{h}_{\alpha \beta}=-\bar{g}( \bar{\nabla}_{e_{\alpha}} e_{\beta}, \mu)=\bar{g}(\widehat{h}_{\alpha \beta} \bar{\nu}+\delta_{\alpha \beta} \bar{N}, \mu)=\cos \theta \widehat{h}_{\alpha \beta}+\sin \theta \delta_{\alpha \beta}.
\end{equation*}
To show the last assertion (4), by taking the derivative of $h\left(\mu, e_{\alpha}\right)=0$ with respect to $e_{\beta}$, using the Codazzi equation in $\mathbb{H}^{n+1}$ and the fact (1), we get
\begin{equation*}
\begin{aligned}
0=e_{\beta}\left(h\left(\mu, e_{\alpha}\right)\right) &=h_{\alpha \mu ; \beta}+h\left(\nabla_{e_{\beta}} e_{\alpha}, \mu\right)+h\left(\nabla_{e_{\beta}} \mu, e_{\alpha}\right) \\
&=h_{\alpha \beta ; \mu}+\bar{g}(\nabla_{e_{\beta}} e_{\alpha}, \mu) h_{\mu \mu}+\bar{g}(\nabla_{e_{\beta}} \mu, e_{\gamma}) h_{\alpha \gamma} \\
&=h_{\alpha \beta ; \mu}-\tilde{h}_{\beta \gamma}\left(h_{\mu \mu} \delta_{\alpha \gamma}-h_{\alpha \gamma}\right).
\end{aligned}
\end{equation*}
Hence we complete the proof. 
\end{proof}
As a direct consequence of Proposition \ref{formula11}, we have the following result.
\begin{corollary}\label{cor222}
Let $\Sigma$ be a hypersurface with $\theta$-capillary boundary for $\theta \in\left(0, \pi/2 \right]$ in $\mathcal{H}_{+}$. If $\Sigma$ is convex (in the sense $h$ is nonnegative definite), then $\partial \Sigma$ is convex in both $\mathcal{H}$ and $\Sigma$.
\end{corollary}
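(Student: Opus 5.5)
The plan is to read off both convexity statements directly from items (2) and (3) of Proposition \ref{formula11}, using only that $h\geq 0$ and that $\cos\theta\geq 0$, $\sin\theta>0$ for $\theta\in(0,\pi/2]$.

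\emph{Convexity in $\mathcal H$.} Fix a point of $\partial\Sigma$ and an orthonormal frame $\{e_\alpha\}_{\alpha=1}^{n-1}$ of $T(\partial\Sigma)$ there. Since $h$ is nonnegative definite on $T\Sigma$, its restriction $(h_{\alpha\beta})$ to $T(\partial\Sigma)$ is also nonnegative definite. By Proposition \ref{formula11}(2),
\begin{equation*}
  \widehat h_{\alpha\beta}=\frac{1}{\sin\theta}\bigl(h_{\alpha\beta}+\cos\theta\,\delta_{\alpha\beta}\bigr).
\end{equation*}
Both $h_{\alpha\beta}$ and $\cos\theta\,\delta_{\alpha\beta}$ are nonnegative definite, and $1/\sin\theta>0$. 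Hence $\widehat h\geq 0$. When $\theta<\pi/2$ we even get $\widehat h\geq\cot\theta\,\delta>0$, so $\partial\Sigma$ is strictly convex in $\mathcal H$.

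\emph{Convexity in $\Sigma$.} Proposition \ref{formula11}(3) gives
\begin{equation*}
  \tilde h_{\alpha\beta}=\cos\theta\,\widehat h_{\alpha\beta}+\sin\theta\,\delta_{\alpha\beta}.
\end{equation*}
The first term is nonnegative definite because $\cos\theta\geq0$ and $\widehat h\geq 0$ by the previous step. The second term is positive definite. So $\tilde h\geq\sin\theta\,\delta>0$, and $\partial\Sigma$ is in fact strictly convex in $\Sigma$. For $\theta<\pi/2$ one can also use the second form $\tilde h_{\alpha\beta}=\frac{1}{\sin\theta}\delta_{\alpha\beta}+\cot\theta\,h_{\alpha\beta}$, which avoids the first step altogether.

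\emph{Main obstacle.} There is essentially no obstacle; the only care needed is with signs. The key point is that $\theta\leq\pi/2$ makes $\cos\theta\geq 0$, and this is exactly where the acute-angle hypothesis enters. Conventions must also match: $\widehat h$ and $\tilde h$ are taken with respect to the outward normals $\bar\nu$ and $\mu$, so nonnegativity means convexity of $\widehat{\partial\Sigma}$ in $\mathcal H$ and of $\Sigma$ near $\partial\Sigma$. These are the conventions fixed before Proposition \ref{formula11}.
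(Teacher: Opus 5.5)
Your proposal is correct and follows exactly the route the paper intends: it states the corollary as a direct consequence of Proposition \ref{formula11}(2)--(3), giving $\widehat h=\frac{1}{\sin\theta}\bigl(h+\cos\theta\,\delta\bigr)\geq\cot\theta\,\delta\geq0$ and $\tilde h=\frac{1}{\sin\theta}\delta+\cot\theta\,h>0$ once $h\geq0$ and $\cos\theta\geq0$. One small correction: the second form of $\tilde h$ also holds at $\theta=\pi/2$, where $\cot(\pi/2)=0$ gives $\tilde h=\delta$, so your restriction to $\theta<\pi/2$ for that shortcut is unnecessary.
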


For the flow \eqref{eqn:intro-equationflow}, positivity of the
numerator ensures parabolicity on $\Gamma_+$, while positivity
of the support function allows a radial graph formulation. Denote 
\begin{equation*}
 A_\theta\coloneqq V-\cos\theta\bar g(x,\nu).
\end{equation*}
The following estimates are obtained both from comparison with two
strictly convex umbilical caps.
\begin{prop}\label{bounded} For $\theta \in\left(0, \pi/2 \right]$. Let $\Sigma$ be a convex hypersurface with $\theta$-capillary boundary in $\mathcal{H}_{+}$. Assume that $E_{n+1}\in\operatorname{int}_{\mathcal H}(\widehat{\partial\Sigma}) $ and there exist $0<R_{1}<R_{2}<\infty$ such that
\begin{equation*}
\widehat{C_{\theta, R_{1}}(E_{n+1})}\subseteq \widehat{\Sigma} \subseteq \widehat{C_{\theta, R_{2}}(E_{n+1})}\quad \text{and}~~~\ R_{2}  \cos\theta<1.
\end{equation*}
Then  
\begin{itemize}
  \item [{\rm(i)}]$1\leq x_{n+1}\leq R_{3}\coloneqq 1+R_{2}(1-\cos\theta)$;
  \item [{\rm(ii)}]$A_{\theta}\geq\frac{(1-\cos\theta)(1-R_{2}\cos\theta)}{R_{3}}$;
\item [{\rm(iii)}]$\bar{g}\left(E_{n+1}, \nu\right)\geq\frac{\cos\theta}{R_{3}}$;
  \item [{\rm(iv)}]$\bar{g}(X,\nu)\geq\frac{(1-\cos\theta)R_{1}}{R_{3}}$.
\end{itemize}
\end{prop}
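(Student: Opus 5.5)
The plan is to work in Euclidean terms in the upper half-space model and reduce everything to support-function comparisons with the two Euclidean balls that carry the caps $C_{\theta,R_1}(E_{n+1})$ and $C_{\theta,R_2}(E_{n+1})$. Write $\nu_E$ for the Euclidean unit normal, so that $\nu=x_{n+1}\nu_E$. Then
\begin{equation*}
\bar g(x,\nu)=\frac{\langle x,\nu_E\rangle}{x_{n+1}},\qquad \bar g(E_{n+1},\nu)=\frac{\langle E_{n+1},\nu_E\rangle}{x_{n+1}},\qquad A_\theta=\frac{1-\cos\theta\langle x,\nu_E\rangle}{x_{n+1}},\qquad \bar g(X,\nu)=\frac{\langle X,\nu_E\rangle}{x_{n+1}}.
\end{equation*}
Part (i) is immediate. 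The cap $C_{\theta,R_2}(E_{n+1})$ lies on the Euclidean sphere with center $c_2=(1-R_2\cos\theta)E_{n+1}$ and radius $R_2$, so its highest point has height $1+R_2(1-\cos\theta)=R_3$. Since $\widehat\Sigma\subseteq\widehat{C_{\theta,R_2}(E_{n+1})}$, this gives $x_{n+1}\le R_3$, and $x_{n+1}\ge 1$ holds because $\Sigma\subset\overline{\mathcal H_+}$.

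The key intermediate claim is that $\widehat\Sigma$ is convex in the Euclidean sense, with $\langle E_{n+1},\nu_E\rangle\ge\cos\theta$ on $\Sigma$; part (iii) then follows by dividing by $x_{n+1}\le R_3$. For a conformal metric $\bar g=e^{2\varphi}\delta$ with $\varphi=-\ln x_{n+1}$, the principal curvatures satisfy $\kappa_i=x_{n+1}\kappa^E_i-\langle E_{n+1},\nu_E\rangle$. Hence hyperbolic convexity gives Euclidean convexity wherever $\langle E_{n+1},\nu_E\rangle\ge 0$. On $\partial\Sigma$, relation \eqref{nu0} together with $\bar N=-E_{n+1}$ gives $\langle E_{n+1},\nu_E\rangle=\cos\theta\ge0$. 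I would show that $\psi\coloneqq\langle E_{n+1},\nu_E\rangle$ attains its minimum on $\partial\Sigma$. At an interior minimum $p$, one has $\nabla\psi=0$, i.e.\ $h^E(E_{n+1}^T,\cdot)=0$ at $p$. Moreover $\Delta_E\psi=-|h^E|^2\psi+\langle\nabla H^E,E_{n+1}^T\rangle$, and the hyperbolic Codazzi structure combined with $h\ge0$ should force $\Delta\psi\le0$ unless $\psi\ge\cos\theta$ already. An alternative route is a continuity or connectedness argument on the set $\{\psi<0\}$: on its closure $\kappa^E$ could be negative, but $\kappa_i\ge0$ gives $x_{n+1}\kappa^E_i\ge\psi$, and one argues via the Euclidean Gauss map that this set is empty. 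This is the step I expect to be the main obstacle, and I would first try the maximum principle for $\psi$ using $\kappa_i\ge 0$ and the boundary value $\cos\theta$.

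Granting Euclidean convexity, parts (ii) and (iv) follow from support functions. For (ii), $\widehat\Sigma$ lies in the ball $B(c_2,R_2)$, and the tangent plane of the convex body $\widehat\Sigma$ at $x\in\Sigma$ supports it. Hence $\langle x,\nu_E\rangle\le\langle c_2,\nu_E\rangle+R_2\le(1-R_2\cos\theta)+R_2$, using $|\nu_E|=1$ and $1-R_2\cos\theta>0$. Therefore
\begin{equation*}
1-\cos\theta\langle x,\nu_E\rangle\ge 1-\cos\theta\bigl(1-R_2\cos\theta+R_2\bigr)=(1-\cos\theta)(1-R_2\cos\theta),
\end{equation*}
and dividing by $x_{n+1}\le R_3$ gives (ii). For (iv), $\widehat\Sigma$ contains the part of $B(c_1,R_1)$ above $\mathcal H$, with $c_1=(1-R_1\cos\theta)E_{n+1}$. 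Together with $E_{n+1}\in\operatorname{int}_{\mathcal H}(\widehat{\partial\Sigma})$ and convexity, this should give $\langle x-c_1,\nu_E\rangle\ge R_1$ at points of $\Sigma$. I would justify this by noting that the supporting half-space at $x$ contains the inner cap, and that $\psi\ge\cos\theta$ prevents the supporting plane from cutting the ball below $\mathcal H$. Then
\begin{equation*}
\langle X,\nu_E\rangle=\langle x-c_1,\nu_E\rangle-R_1\cos\theta\,\psi\ge R_1-R_1\cos\theta=(1-\cos\theta)R_1,
\end{equation*}
using $\psi\le1$, and dividing by $x_{n+1}\le R_3$ gives (iv).
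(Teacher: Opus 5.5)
Your part (i) matches the paper, and so do your derivations of (iii) and (iv) from the key claim. In particular, (iv) is the paper's supporting-hyperplane argument with the test point $y=c_1+R_1\nu_E$; its height $1-R_1\cos\theta+R_1\psi\ge 1$ is exactly where $\psi\ge\cos\theta$ is needed. Part (ii) does not need convexity at all: $x\in\overline{B(c_2,R_2)}$ gives $\langle x-c_2,\nu_E\rangle\le |x-c_2|\le R_2$ by Cauchy--Schwarz, which is what the paper uses.

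The genuine gap is the key claim itself, $\psi=\langle E_{n+1},\nu_E\rangle\ge\cos\theta$ together with Euclidean convexity. You leave it open, and the maximum-principle route you propose cannot close it. $\Sigma$ is an arbitrary convex hypersurface satisfying no equation. The term $\langle\nabla H^E,E_{n+1}^T\rangle$ in your formula for $\Delta_E\psi$ therefore has no sign, and neither the hyperbolic Codazzi equation nor $h\ge 0$ controls it. So an interior minimum with $\psi<0$ yields no contradiction. The same happens in hyperbolic form: $\phi=\bar g(E_{n+1},\nu)$ satisfies $\Delta\phi=\langle\nabla H,E_{n+1}^T\rangle-VH-\phi|h|^2$.

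The first-order condition is equally empty. $h^E(E_{n+1}^T,\cdot)=0$ gives nothing where $\psi<0$, since $h^E$ is not yet known to be semidefinite there. Its hyperbolic analogue $h(E_{n+1}^T,\cdot)=0$ gives nothing when $h$ is only semidefinite, which the statement allows. Even for $h>0$ it only forces $\nu_E=-E_{n+1}$ at the minimum, and that still has to be excluded globally. The Gauss-map/connectedness alternative on $\{\psi<0\}$ is not an argument as written.

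The missing idea is that $\psi\ge 0$ is a global consequence of convexity of the body. The paper's route is as follows.
\begin{itemize}
\item It takes $\widehat\Sigma$ to be a hyperbolically convex body: $h\ge 0$ on $\Sigma$, the horospherical face is convex, and the interior dihedral angle at the edge is $\theta<\pi$.
\item At each edge point $p=(p',1)$, the totally geodesic hyperplane tangent to $\Sigma$ is the Euclidean hemisphere $|x'-p'+\tan\theta\,\bar\nu_\delta(p')|^2+x_{n+1}^2=\sec^2\theta$, or the vertical hyperplane $\langle x'-p',\bar\nu_\delta(p')\rangle=0$ when $\theta=\pi/2$. The body $\widehat\Sigma$ lies on one side of it.
\item Combining this with the Euclidean convexity of $\widehat{\partial\Sigma}\subset\mathcal H$ (Corollary~\ref{cor222}), a nearest-point argument shows that the vertical projection of $\widehat\Sigma$ lies in $\widehat{\partial\Sigma}$.
\item Vertical lines are hyperbolic geodesics, so each meets $\widehat\Sigma$ in a segment starting on $\mathcal H$. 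Hence $\Sigma$ is a vertical graph and $\psi\ge 0$.
\item Only now does $x_{n+1}h^E_{ij}=h_{ij}+\psi\,\delta_{ij}$ give Euclidean convexity.
\item Finally, $\psi\ge\cos\theta$ follows from Euclidean convexity and the boundary value $\psi=\cos\theta$. Every unit direction $\xi$ with $\langle\xi,E_{n+1}\rangle<\cos\theta$ already lies in the normal cone at an edge point (or is $-E_{n+1}$). It therefore cannot be the unique normal at an interior point of the smooth part $\Sigma$.
\end{itemize}
Without this chain, (iii) and (iv) are not established.
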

\begin{proof}
Due to $\Sigma\subseteq\widehat{C_{\theta, R_{2}}(E_{n+1})}\subseteq\mathcal{H}_{+}$, thus we have
\begin{equation*}
  1\leq x_{n+1}\leq R_{2}+(1-R_{2}\cos\theta)=1+R_{2}(1-\cos\theta),
\end{equation*}
hence (i) holds. By $\Sigma\subseteq\widehat{C_{\theta, R_{2}}(E_{n+1})}$ again, we imply
\begin{eqnarray*}
\langle x,\nu_{\delta}\rangle&=&\langle x-(1-R_{2}\cos\theta)E_{n+1},\nu_{\delta}\rangle+(1-R_{2}\cos\theta)\langle E_{n+1},\nu_{\delta}\rangle\\
&\leq&|x-(1-R_{2}\cos\theta)E_{n+1}|+(1-R_{2}\cos\theta)\nonumber\\
&\leq&R_{2}+(1-R_{2}\cos\theta).\nonumber
\end{eqnarray*}
Here $\nu_{\delta}$ is the unit normal vector on $\Sigma$ with respect to the Euclidean metric $\delta$.
It implies that
\begin{equation*}
  A_{\theta}=\frac{1-\cos\theta\langle x,\nu_{\delta}\rangle}{x_{n+1}}\geq\frac{1-\cos\theta(R_{2}+(1-\cos\theta R_{2}))}{R_{3}}
=\frac{(1-\cos\theta)(1-\cos\theta R_{2})}{R_{3}},
\end{equation*}
thus (ii) holds.

Since $\Sigma$ and $\mathcal{H}$ are both convex hypersurfaces in $\mathbb{H}^{n+1}$, which implies their enclosed region $\widehat{\Sigma}$ is a bounded convex body in $\mathbb{H}^{n+1}$. From Corollary \ref{cor222}, we know that the wetting domain $\widehat{\partial\Sigma}=\overline{\widehat{\Sigma}}\cap\mathcal{H}$ is a Euclidean convex body in $\mathcal{H}\simeq \mathbb{R}^{n}$.

We first claim that the vertical projection of $\widehat\Sigma$ is contained in
$\widehat{\partial\Sigma}$. In fact, the totally geodesic plane through $p=(p',1)\in{\partial\Sigma}$ and tangent to $\Sigma$ is
\begin{equation*}%\label{totally-geodesic-plane}
\Pi\coloneqq \begin{cases}\{(x',x_{n+1}):|x'-p'+\tan\theta\bar{\nu}_{\delta}(p')|^{2}+x^{2}_{n+1}=\sec^{2}\theta\}, & \text { when } \theta\in(0,\pi/2) \\
\{(x',x_{n+1}): \langle x'-p',\bar{\nu}_{\delta}(p')\rangle=0\}, & \text{ when } \theta=\pi/2,
\end{cases}
\end{equation*}
where $\bar{\nu}_{\delta}=\frac{\bar{\nu}}{x_{n+1}}$ is the unit normal vector of $\partial\Sigma$ in $\mathcal{H}$ w.r.t the Euclidean metric $\delta$. 

By the hyperbolic convexity, we know that $\widehat\Sigma$ always lies on one side of the supporting plane $\Pi$. Thus for any $(y',y_{n+1})\in\widehat{\Sigma}$, we see
\begin{equation}\label{disance}
  2\sin\theta\langle y'-p',\bar{\nu}_{\delta}(p')\rangle+\cos\theta(|y'-p'|^{2}+y^{2}_{n+1}-1)\leq0.
\end{equation}
By contradiction, if there exists a point $(y',y_{n+1})\in\widehat{\Sigma}$ such that $y'\notin\widehat{\partial\Sigma}$. We choose $(p',1)\in\partial\Sigma$ is the nearest point to $(y',1)$ and denote their distance by $d$. Thus $y'-p'=d\bar{\nu}_{\delta}(p')$ since $\widehat{\partial\Sigma}$ is a Euclidean convex body.

From \eqref{disance}, we have 
\begin{equation}\label{distant-cos}
    2d\sin\theta+\cos\theta(d^{2}+y^{2}_{n+1}-1)\leq0.
\end{equation}
However, the left-hand side of \eqref{distant-cos} is strictly positive since $\cos\theta\geq0$, which is a contradiction. Hence the claim holds.

For any $p=(p',1)\in\widehat{\partial\Sigma}$, we consider a hyperbolic geodesic line through point $p$ as follows
\begin{equation*}
  \gamma_{p}=\{(p',x_{n+1})\in\mathbb{R}^{n+1}_{+}:x_{n+1}>0\}.
\end{equation*}
Since $\widehat{\Sigma}$ is a hyperbolic convex body, thus the geodesic line segment $[\tilde{p},\tilde{q}]_{\mathbb{H}^{n+1}}\in\widehat{\Sigma}$ for any two points $\tilde{p},\tilde{q}\in\widehat{\Sigma}$, which implies that $\gamma_{p}\cap\widehat{\Sigma}$ is a connected interval. 
Due to $\theta \in\left(0, \pi/2 \right]$, we have $\Sigma$ is a vertical graph, hence $\bar{g}(E_{n+1},\nu)\geq0$ along $\Sigma$.

Let $h^{\delta}$ be the second fundamental form with respect to ${\nu}_{\delta}$ of ${\Sigma}$ in $\left(\mathbb{R}_{+}^{n+1}, \delta_{\mathbb{R}_{+}^{n+1}}\right)$.  Then $h^{\delta}$ and $h$ have the following relations
\begin{equation}\label{conformal-relation}
x_{n+1}h^{\delta}_{ij}=h_{ij}+\langle E_{n+1},{\nu}_{\delta}\rangle\delta_{ij}\geq0.
\end{equation}
Hence $\Sigma$ is a Euclidean convex hypersurface in $\left(\mathbb{R}_{+}^{n+1}, \delta_{\mathbb{R}_{+}^{n+1}}\right)$.
By Euclidean convexity and the boundary angle condition \eqref{angle}.
%$\langle E_{n+1},\nu_\delta\rangle\ge\cos\theta$.
Therefore,
\begin{equation*}
\bar g(E_{n+1},\nu)
=\frac{\langle E_{n+1},\nu_\delta\rangle}{x_{n+1}}
\ge\frac{\cos\theta}{R_3},
\end{equation*}
which proves (iii).

At $x\in\Sigma$, we choose the vector
\begin{equation*}
y=(1-\cos\theta R_{1})E_{n+1}+R_{1}\nu_{\delta}(x)\in C_{\theta, R_{1}}(E_{n+1}).
\end{equation*}
Since $\Sigma$ is a Euclidean convex hypersurface in $\left(\mathbb{R}_{+}^{n+1}, \delta_{\mathbb{R}_{+}^{n+1}}\right)$. Due to ${C_{\theta, R_{1}}(E_{n+1})}\subseteq \widehat{\Sigma}$. We utilize the supporting hyperplane inequality for the convex body $\widehat{\Sigma}$ to get that
\begin{equation*}
  \langle x-y,\nu_{\delta}\rangle\geq0,
\end{equation*}
and
\begin{equation*}
  \bar{g}(X,\nu)=\frac{\langle x-E_{n+1},\nu_{\delta}\rangle}{x_{n+1}}\geq\frac{\langle y-E_{n+1},\nu_{\delta}\rangle}{R_{3}}=\frac{(1-\cos\theta\langle E_{n+1},\nu_{\delta}\rangle)R_{1}}{R_{3}}\geq\frac{(1-\cos\theta)R_{1}}{R_{3}}.
\end{equation*}
\end{proof}

\subsection{General variation formulae} \

Before specifying the curvature speed, we record the variation formulae for a general capillary deformation. Let $\Sigma_t$, parametrized by $x(\cdot,t):\Sigma\to\overline{\mathcal H_+}$, be a smooth family of embedded hypersurfaces with $\theta$-capillary boundary, evolving by
%Let $\Sigma_t$ be a family of smoothly embedded hypersurfaces with $\theta$-capillary boundary in $\mathcal{H}_{+}$, given by $x(\cdot, t): \Sigma \rightarrow\mathcal{H}_{+}$ as the general flow
\begin{equation}\label{generalflow}
 \partial_t x=f \nu+\mathcal T,
\end{equation}
with some normal speed function $f$ and $\mathcal T \in T \Sigma_t$. Due to $x(\partial\Sigma, t)\subseteq\mathcal{H}$, thus
\begin{equation*}
  f \nu+\mathcal T|_{\partial \Sigma_{t}}=\partial_{t} x|_{\partial \Sigma_{t}} \in T \mathcal{H}.
\end{equation*}
By virtue of \eqref{mu0} and \eqref{nu0}, 
\begin{equation*}
  \nu=\frac{1}{\sin \theta} \bar{\nu}-\cot \theta \mu.
\end{equation*}
Since $\bar{\nu} \in T \mathcal{H}$, we see $(\mathcal T-f \cot \theta \mu) \in T \mathcal{H} \cap T \Sigma_{t}$. Up to a diffeomorphism of $\mathbb{S}^{n-1}$, we may always assume that
\begin{equation}\label{boundary2}
\mathcal T|_{\partial \Sigma_{t}}=f \cot \theta \mu .
\end{equation}

Denote by $h$ and $H$ the second fundamental form and the mean curvature of the immersion $x$, respectively. Precisely,
$h(X, Y)= \bar g(\bar \n_X \nu, Y)$ and $H=\tr_g(h)$. The evolution equations below have been shown in \cite{HP} for $\mathcal T=0$ and in \cite[Proposition A.1]{GLX} for a general $\mathcal T$. They will be used to derive the a priori
estimates.
\begin{prop}\label{formula111}
Along flow \eqref{generalflow}, there hold
\begin{itemize}
  \item [(1)]$\partial_{t} g_{i j}=2 f h_{i j}+\nabla_{i} \mathcal T_{j}+\nabla_{j} \mathcal T_{i}$.
  \item [(2)]$\partial_{t} d A_{t}=(f H+\operatorname{div}(\mathcal T)) d A_{t}$.
  \item [(3)]$\partial_{t} \nu=-\nabla f+h\left(e_{i},\mathcal T\right) e_{i}$.

  \item [(4)]$\partial_{t} h_{i j}=-\nabla_{i j}^{2} f+f (h_{i k} h_{j}^{k}+\bar{g}_{ij})+\nabla_{\mathcal T} h_{i j}+h_{j}^{k} \nabla_{i} \mathcal T_{k}+h_{i}^{k} \nabla_{j} \mathcal T_{k}$.

\item [(5)]$\partial_t h_j^i=-\nabla^i\nabla_jf-f(h_j^kh_k^i-\bar g_j^i)+\nabla_{\mathcal T}h_j^i+h_k^i\nabla_j\mathcal T^k-h_j^k\nabla_k\mathcal T^i$.

  \item [(6)]$\partial_{t} H=-\Delta f+(n-|h|^{2})f+\nabla_{\mathcal T} H$.

\item [(7)]$\partial_tF=-F_i^j\nabla^i\nabla_jf
-f(F_i^jh_j^kh_k^i-F_i^i)+\nabla_{\mathcal T}F$
for a smooth symmetric function $F$ of the principal curvatures,
where $F_i^j\coloneqq\partial F/\partial h_j^i$.

  \item [(8)]$\partial_{t} \sigma_{k}=-\frac{\partial \sigma_{k}}{\partial h_{i}^{j}} \nabla^{i} \nabla_{j} f-f\left(\sigma_{1} \sigma_{k}-(k+1) \sigma_{k+1}\right)+(n-k+1)f\sigma_{k-1}+\nabla_{\mathcal T} \sigma_{k}$.
\end{itemize}
\end{prop}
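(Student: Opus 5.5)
Proposition \ref{formula111} collects the standard evolution equations for the flow \eqref{generalflow}, $\partial_t x=f\nu+\mathcal T$, in $\mathbb H^{n+1}$, which has sectional curvature $-1$. I will derive them in order, each from the previous ones. Throughout I work in normal coordinates at a point, use the Gauss--Weingarten relations $\bar\nabla_{e_i}e_j=\nabla_{e_i}e_j-h_{ij}\nu$ and $\bar\nabla_{e_i}\nu=h_i^ke_k$ (sign convention $h(X,Y)=\bar g(\bar\nabla_X\nu,Y)$), and use $\bar R(\nu,e_i,\nu,e_j)=-\bar g_{ij}$.

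Items (1)--(3) are direct.
\begin{itemize}
\item[(1)] Write $g_{ij}=\bar g(\partial_ix,\partial_jx)$ and differentiate in $t$. Commute $\bar\nabla_t\partial_ix=\bar\nabla_i\partial_tx$, insert $\partial_tx=f\nu+\mathcal T$, and use $\bar g(\bar\nabla_i\nu,\partial_j x)=h_{ij}$. This gives $2fh_{ij}+\nabla_i\mathcal T_j+\nabla_j\mathcal T_i$.
\item[(2)] Take the trace of (1) with $g^{ij}/2$.
\item[(3)] Since $\bar g(\partial_t\nu,\nu)=0$, it suffices to compute $\bar g(\partial_t\nu,\partial_ix)=-\bar g(\nu,\bar\nabla_i(f\nu+\mathcal T))=-\nabla_if+h(\mathcal T,e_i)$.
\end{itemize}

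For (4) I start from $h_{ij}=-\bar g(\bar\nabla_i\partial_jx,\nu)$ and differentiate in $t$:
\[
\partial_t h_{ij}=-\bar g(\bar\nabla_t\bar\nabla_i\partial_jx,\nu)-\bar g(\bar\nabla_i\partial_jx,\partial_t\nu).
\]
In the first term I commute $\bar\nabla_t\bar\nabla_i\partial_jx=\bar\nabla_i\bar\nabla_j\partial_tx+\bar R(\partial_tx,\partial_ix)\partial_jx$. The curvature term, combined with the normal component $f\nu$, produces the ambient contribution $f\bar g_{ij}$; this is where the ambient curvature $-1$ enters. Expanding $\bar\nabla_i\bar\nabla_j(f\nu)$ gives $-\nabla^2_{ij}f+fh_{ik}h^k_j$. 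The tangential part $\mathcal T$ only reparametrizes, so it contributes the Lie derivative $\mathcal L_{\mathcal T}h=\nabla_{\mathcal T}h_{ij}+h_j^k\nabla_i\mathcal T_k+h_i^k\nabla_j\mathcal T_k$. Collecting these terms gives (4).

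Item (5) then follows from $h^i_j=g^{ik}h_{kj}$ together with $\partial_tg^{ik}=-g^{ip}(\partial_tg_{pq})g^{qk}$ from (1). The $fh^2$ terms partly cancel and leave $-f(h^2-\bar g)$. For the $\mathcal T$-terms, the Lie derivatives of the metric and of $h$ combine into $\nabla_{\mathcal T}h^i_j+h^i_k\nabla_j\mathcal T^k-h^k_j\nabla_k\mathcal T^i$, the Lie derivative of a $(1,1)$-tensor.

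The remaining items are contractions of (5).
\begin{itemize}
\item[(6)] Take the trace of (5): the commutator terms in $\mathcal T$ cancel under the trace, and $\mathrm{tr}\,\bar g=n$.
\item[(7)] For a symmetric function $F$, apply the chain rule $\partial_tF=F^j_i\partial_th^i_j$. The $\mathcal T$-commutator terms vanish after contraction, because $F^j_i$ commutes with $h$ (it is $\mathrm{O}(n)$-invariant), and what remains is $\nabla_{\mathcal T}F$.
\item[(8)] Specialize (7) to $F=\sigma_k$. Evaluate $\sigma_k^{ij}h_{jl}h^l_i$ via Proposition \ref{formula2}(5) and $\mathrm{tr}\,\sigma_k^{ij}=(n-k+1)\sigma_{k-1}$ via Proposition \ref{formula2}(3).
\end{itemize}

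The main obstacle is the sign bookkeeping in (4): the orientation of $\nu$ (outward) and the convention $h=\bar g(\bar\nabla\nu,\cdot)$ must be used consistently with the curvature sign, so that the ambient term comes out as $+f\bar g_{ij}$. I will check this against a geodesic sphere expanding at unit speed.
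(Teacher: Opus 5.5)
Your derivation is correct and is the standard computation. The paper gives no proof of its own here; it only cites \cite{HP} for $\mathcal T=0$ and \cite[Proposition A.1]{GLX} for general $\mathcal T$, where the same steps appear: commuting $\bar\nabla_t$ past $\bar\nabla_i$, the curvature term producing $+f\bar g_{ij}$, the tangential part entering as $\mathcal L_{\mathcal T}$, and then contracting for (5)--(8).

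One point to tidy: your reparametrization shortcut for the $\mathcal T$-terms in (4) needs a local flow of $\mathcal T$, but on $\partial\Sigma$ the field $\mathcal T$ (e.g.\ $f\cot\theta\,\mu$ under \eqref{boundary2}) generally points across the boundary. So either argue in the interior and extend the pointwise identity by continuity, or verify those terms directly via the Codazzi equation, which has no ambient correction in $\mathbb H^{n+1}$.
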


\begin{prop}\label{boundary-equ} Let $\Sigma_{t} \subset \mathcal{H}_{+}$ be a family of smooth, properly embedded hypersurfaces with $\theta$-capillary boundary. Then
\begin{equation*}
  \nabla_{\mu} f=\left(\frac{1}{\sin \theta}+\cot \theta h_{\mu \mu}\right) f \qquad\text { on } \partial \Sigma_{t} .
\end{equation*}
\end{prop}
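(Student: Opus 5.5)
We have to prove Proposition \ref{boundary-equ}: along any family of $\theta$-capillary hypersurfaces, the normal speed satisfies $\nabla_\mu f=\bigl(\frac{1}{\sin\theta}+\cot\theta\,h_{\mu\mu}\bigr)f$ on $\partial\Sigma_t$. The plan is to differentiate the capillary condition $\bar g(\nu,\bar N\circ x)=-\cos\theta$ in time. Since the right-hand side is constant, we get $0=\bar g(\partial_t\nu,\bar N)+\bar g(\nu,\bar\nabla_{\partial_t x}\bar N)$, and we evaluate both terms using Proposition \ref{formula111}(3) and the umbilicity of $\mathcal H$.

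\emph{Step 1: the first term.} By Proposition \ref{formula111}(3), $\partial_t\nu=-\nabla f+h(e_i,\mathcal T)e_i$. On $\partial\Sigma$ we have $\mathcal T=f\cot\theta\,\mu$ by \eqref{boundary2}. Moreover $\mu$ is a principal direction by Proposition \ref{formula11}(1), so $h(e_i,\mathcal T)e_i=f\cot\theta\,h_{\mu\mu}\mu$. Tangent vectors of $\partial\Sigma$ are orthogonal to $\bar N$. From \eqref{mu0}, the component of $\mu$ along $\bar N$ is $\sin\theta$. Hence
\[
\bar g(\partial_t\nu,\bar N)=\sin\theta\bigl(-\nabla_\mu f+f\cot\theta\,h_{\mu\mu}\bigr).
\]

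\emph{Step 2: the second term.} The horosphere $\mathcal H$ is umbilic with all principal curvatures equal to one. With the sign convention $h(X,Y)=\bar g(\bar\nabla_X\nu,Y)$ applied to $\bar N$, this gives $\bar\nabla_Y\bar N=Y$ for $Y\in T\mathcal H$. I would double-check this against the upper half-space formula $\bar\nabla_Y E_{n+1}=-Y/x_{n+1}$ together with $\bar N=-x_{n+1}E_{n+1}$; the correction term $Y(x_{n+1})$ vanishes for $Y$ tangent to $\{x_{n+1}=1\}$. Since $\partial_t x\in T\mathcal H$ on the boundary, we get $\bar g(\nu,\bar\nabla_{\partial_t x}\bar N)=\bar g(\nu,\partial_t x)=\bar g(\nu,f\nu+\mathcal T)=f$, because $\mathcal T$ is tangent to $\Sigma_t$.

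\emph{Step 3: conclusion.} Adding the two terms gives $0=-\sin\theta\,\nabla_\mu f+f\cos\theta\,h_{\mu\mu}+f$. Dividing by $\sin\theta$ yields $\nabla_\mu f=\bigl(\frac{1}{\sin\theta}+\cot\theta\,h_{\mu\mu}\bigr)f$, as claimed.

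The only delicate point is the sign bookkeeping in Steps 1 and 2. This covers the orientation of $\bar N$ and the sign of $\bar g(\mu,\bar N)=\sin\theta$. It also covers the fact that differentiating $\bar N\circ x$ uses the full velocity $\partial_t x$ rather than only its normal part. I would verify the resulting constant on a model cap, for example a translation of $C_{\theta,r}$, before finalizing.
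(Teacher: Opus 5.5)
Your proposal is correct and follows the paper's proof essentially line by line. The paper also differentiates $\bar g(\nu,\bar N\circ x)=-\cos\theta$ in time. For the first term it uses Proposition \ref{formula111}(3) with $\mathcal T=f\cot\theta\,\mu$, the fact that $\mu$ is a principal direction, and $\bar N=\sin\theta\,\mu-\cos\theta\,\nu$. For the second term it uses $\bar\nabla_{Y}\bar N=Y$ on the horosphere, which gives $-\sin\theta\,\nabla_\mu f+\cos\theta\, h_{\mu\mu}f+f=0$. Your sign checks, including the vanishing of $Y(x_{n+1})$ for $Y$ tangent to $\mathcal H$, agree with that computation.
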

\begin{proof} Let $\left\{e_{\alpha}\right\}_{\alpha=1}^{n-1}$ be an orthonormal frame of $T(\partial \Sigma_{t})\subset\mathcal{H}$. Then $\left\{\left\{e_{\alpha}\right\}_{\alpha=1}^{n-1}, \mu\right\}$ forms an orthonormal frame for $T \Sigma_{t}$. Since
\begin{equation*}
 \bar{g}(\nu, \bar{N} \circ x)=-\cos \theta\qquad \text { on } \partial \Sigma_{t},
\end{equation*}
by taking the time derivative and using Proposition \ref{formula111} (3), we obtain along $\partial \Sigma_{t}\subseteq\mathcal{H}$,
\begin{eqnarray*}
0 &=&\bar{g}(\partial_{t} \nu, \bar{N})+\bar{g}(\nu, \bar{\nabla}_{f \nu+\mathcal T} \bar{N}) \\
&=&\bar{g}(-\nabla f+h(e_{i}, \mathcal T)e_{i},\sin\theta\mu-\cos\theta\nu)+\bar{g}(\nu, f \nu+\mathcal T)\\
&=&-\sin \theta \nabla_{\mu} f+\cos \theta h(\mu,\mu) f+f.
\end{eqnarray*}
The proof is complete.
\end{proof}

\subsection{Minkowski formulae}\ 

We recall the properties of the vector field $X$ and the function $V$ introduced by \eqref{eqn:X-V}. Proposition \ref{lem2.1} gives the following identities, which will be used in the capillary Minkowski formula.

\begin{prop}[{\rm \cite[Proposition 2.3]{GX2}}]%\label{xaa}

\

$(i)$~$X$ is a conformal Killing vector field with $\frac{1}{2}\mathcal{L}_{X}\bar{g}=V\bar{g}$, namely
\begin{eqnarray}\label{XXaeq1}
\frac12\left[\bar \n_i (X)_j+ \bar \n_j (X)_i\right]=V\bar{g}_{ij}.
\end{eqnarray}

 $ ({ii})$ $X\mid_{\mathcal{H}}$ is a tangential vector field on $\mathcal{H}$, i.e.,
\begin{eqnarray}\label{XXaeq2}
\bar{g}(X, \bar{N})=0 \quad\text{\rm on}~~\mathcal{H}.
\end{eqnarray}
\end{prop}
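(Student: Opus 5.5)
The plan is to get both assertions directly from Proposition \ref{lem2.1} and the explicit description of $\mathcal H$ in the upper half-space model. No new computation beyond linearity is needed.

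For (i), I would use that the symmetrized covariant derivative $Z\mapsto \frac12\mathcal L_Z\bar g$, written in components as $\frac12\bigl(\bar g(\bar\n_i Z,E_j)+\bar g(\bar\n_j Z,E_i)\bigr)$, is linear in $Z$. Applying it to $X=x-E_{n+1}$ splits it into two terms. By Proposition \ref{lem2.1}(i), $x$ is Killing, so its contribution \eqref{xKilling} vanishes. By Proposition \ref{lem2.1}(ii), the contribution of $E_{n+1}$ is $-\frac{1}{x_{n+1}}\bar g_{ij}$. Subtracting gives
\begin{equation*}
\frac12\bigl[\bar\n_i(X)_j+\bar\n_j(X)_i\bigr]=0-\Bigl(-\frac{1}{x_{n+1}}\bar g_{ij}\Bigr)=V\bar g_{ij},
\end{equation*}
which is \eqref{XXaeq1}.

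As a sanity check, and as an alternative route independent of the cited proposition, I would verify this from the connection formulas recorded above. Using $\bar\n_Y E_{n+1}=-\frac{1}{x_{n+1}}Y$, the $E_{n+1}$ part contributes $\bar g(\bar\n_Y X,Y)\supset \frac{1}{x_{n+1}}\bar g(Y,Y)$. Using $\bar\n_Y x=-\bar g(Y,\bar E_{n+1})x+\bar g(Y,x)\bar E_{n+1}$, the $x$ part gives $\bar g(\bar\n_Y x,Y)=-\bar g(Y,\bar E_{n+1})\bar g(x,Y)+\bar g(Y,x)\bar g(\bar E_{n+1},Y)=0$. Together these give $\bar g(\bar\n_YX,Y)=V\bar g(Y,Y)$ for all $Y$, which is equivalent to \eqref{XXaeq1} by polarization.

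For (ii), I would compute on $\mathcal H=\{x_{n+1}=1\}$ with $\bar N=-E_{n+1}$, which has unit $\bar g$-length exactly there. This gives
\begin{equation*}
\bar g(X,\bar N)=-\frac{\langle x-E_{n+1},E_{n+1}\rangle}{x_{n+1}^2}=-\frac{x_{n+1}-1}{x_{n+1}^2}=0 .
\end{equation*}
Hence $X|_{\mathcal H}$ is tangent to $\mathcal H$.

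There is no real obstacle here. The only points needing care are the sign convention for $\bar N$, and the fact that $x$ and $E_{n+1}$ are Euclidean coordinate vector fields whose $\bar g$-norms carry the factor $x_{n+1}^{-2}$.
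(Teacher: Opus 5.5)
Your proposal is correct and follows the route the paper indicates: the paper obtains this cited fact from Proposition \ref{lem2.1}, using linearity of $\frac12\mathcal L_{(\cdot)}\bar g$ in $X=x-E_{n+1}$ exactly as you do. Part (ii) is the direct check $\bar g(X,\bar N)=-(x_{n+1}-1)/x_{n+1}^2=0$ on $\{x_{n+1}=1\}$. Your second verification via the connection formulas and polarization is also valid, but not needed.
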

\begin{prop}[{\rm \cite[Proposition 2.4]{GX2}}]\label{xaa2} ~ Conformal factor $V$ satisfies the following properties:
\
\begin{eqnarray}
  \bar{\n}^2 V&=& V  \bar{g } \quad\quad\text{in}~~ \hh^{n+1},\label{Va-1}\\
\partial_{\bar{N}}V&=& V \qquad  \qquad~~\text{on}~~ \mathcal{H}.\nonumber
\end{eqnarray}
\end{prop}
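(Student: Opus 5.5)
The plan is a direct computation in the upper half-space model, using the connection identities recorded at the beginning of Section \ref{sec2}. Write $V=1/x_{n+1}$. Its Euclidean differential is $dV=-x_{n+1}^{-2}\,dx_{n+1}$, so the Euclidean gradient is $DV=-x_{n+1}^{-2}E_{n+1}$.

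Since $\bar g=x_{n+1}^{-2}\delta$, the hyperbolic gradient is the Euclidean one rescaled by $x_{n+1}^2$. Hence
\begin{equation*}
  \bar\nabla V = x_{n+1}^{2}\,DV = -E_{n+1}.
\end{equation*}
This is the key simplification: up to sign, the gradient of $V$ is exactly the conformal Killing field $E_{n+1}$ of Proposition \ref{lem2.1} (ii).

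For the Hessian, take any vector $Y$ and apply the identity $\bar\nabla_Y E_{n+1}=-\frac{1}{x_{n+1}}Y$ from Section \ref{sec2}. This gives
\begin{equation*}
  \bar\nabla_Y\bar\nabla V=-\bar\nabla_Y E_{n+1}=\frac{1}{x_{n+1}}Y=V\,Y,
\end{equation*}
that is, $\bar\nabla^2V=V\bar g$ in $\hh^{n+1}$. If one prefers not to rely on the recorded identity, it can be rederived from the displayed relation between $\bar\nabla$ and $D$. Set $Z=E_{n+1}$ and $\varphi=\ln x_{n+1}$. Then $D_YZ=0$, $Z(\varphi)=x_{n+1}^{-1}$ and $D\varphi=x_{n+1}^{-1}E_{n+1}$, so
\begin{equation*}
  \bar\nabla_Y E_{n+1}=-Y(\varphi)E_{n+1}-x_{n+1}^{-1}Y+Y_{n+1}x_{n+1}^{-1}E_{n+1}=-x_{n+1}^{-1}Y,
\end{equation*}
because $Y(\varphi)=Y_{n+1}/x_{n+1}$ and the first and third terms cancel.

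For the boundary identity, first fix the hyperbolic unit normal. The $\bar g$-unit outward normal to $\mathcal H$ is $\bar N=-\bar E_{n+1}=-x_{n+1}E_{n+1}$, which coincides with $-E_{n+1}$ on $\mathcal H$ since $x_{n+1}=1$ there. Then
\begin{equation*}
  \partial_{\bar N}V=dV(-x_{n+1}E_{n+1})=x_{n+1}\cdot x_{n+1}^{-2}=V.
\end{equation*}
Equivalently, $\partial_{\bar N}V=\bar g(-E_{n+1},\bar N)$, which equals $1=V$ on $\mathcal H$.

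The only place care is needed is the normalization: $E_{n+1}$ is a Euclidean unit vector, while the hyperbolic unit normal is $\bar E_{n+1}=x_{n+1}E_{n+1}$. Both computations are straightforward once that is kept consistent.
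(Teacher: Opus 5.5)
Your computation is correct: $\bar\nabla V = x_{n+1}^{2}DV=-E_{n+1}$ together with the recorded identity $\bar\nabla_Y E_{n+1}=-x_{n+1}^{-1}Y$ gives $\bar\nabla^2V=V\bar g$, and on $\mathcal H$ one has $\partial_{\bar N}V=\bar g(-E_{n+1},-E_{n+1})=1=V$. The paper does not prove this proposition itself but quotes it from Guo--Xia, and your direct computation in the upper half-space model is the standard verification of the quoted result.
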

By employing such properties of the conformal Killing vector $X$, %Chen--Pyo \cite[Proposition 4]{CP} established the following higher-order capillary Minkowski formula in a horoball.
Guo--Wang--Xia \cite[Proposition 3.4]{GWX} established the first-order ($k=1$) capillary Minkowski formula in a horoball. The higher-order identities below follow from Chen--Pyo \cite[Proposition 4]{CP} by specializing the supporting hypersurface to a horosphere.
\begin{prop}\label{mink1}
For $\theta\in(0,\pi)$. Let $\Sigma$ be a smooth compact immersed orientable hypersurfaces with $\theta$-capillary boundary in $\mathcal{H}_{+}$. Then
\begin{equation}\label{mink2}
 \int_{\Sigma} H_{k-1}\left(V-\cos \theta \bar{g}(x,\nu)\right)d A=\int_{\Sigma} H_{k}\bar{g}(X, \nu) dA,\quad \text{for }~ 1 \leq k \leq n,
\end{equation}
where ${x}$ is position vector field and $H_{k}$ is the $k$-th normalized mean curvature of $\Sigma$.
\end{prop}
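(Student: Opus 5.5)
We prove \eqref{mink2} with the classical Hsiung--Reilly divergence argument, adapted to the capillary boundary. The conformal Killing field $X$ and the Killing field $x$ are the two ambient ingredients.

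\emph{Interior computation.} Let $X^T$ and $x^T$ denote the tangential projections of $X$ and $x$ onto $T\Sigma$, and consider the tangent field
\[
P^j\coloneqq \sigma_{k}^{ij}\bigl(X^T-\cos\theta\, Y\bigr)_i,
\]
where $Y$ is a tangent field built from $x$ and $\nu$ and chosen to correct the boundary term. The divergence of the $X$-part is computed as follows:
\begin{itemize}
\item The Newton tensor is divergence-free by Proposition \ref{formula2}(1), so only $\sigma_k^{ij}\nabla_i X^T_j$ survives.
\item The Gauss formula and \eqref{XXaeq1} give $\nabla_i X^T_j = V g_{ij}-\bar g(X,\nu)h_{ij}$ after symmetrization.
\item Proposition \ref{formula2}(3),(4) turn this into $(n-k+1)\sigma_{k-1}V-k\sigma_k\bar g(X,\nu)$, which in normalized form is proportional to $H_{k-1}V-H_k\bar g(X,\nu)$.
\end{itemize}
For the $x$-term, since $x$ is Killing by \eqref{xKilling}, $\mathrm{div}_\Sigma$ applied to $\sigma_{k-1}^{ij}$ contracted with a suitable tangential field built from $x$ (for instance $\bar g(x,\nu)\,\bar\nabla$-type combinations, or more simply the field $\sigma_{k-1}^{ij}\bar g(x,e_i)$ combined with the identity $\nabla_i\bar g(x,\nu)=h_i^l\bar g(x,e_l)$) produces $\sigma_k$-type terms times $\bar g(x,\nu)$. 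The Killing property is what removes the $g_{ij}$-contributions.

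\emph{Combination and boundary term.} Taking the right linear combination yields an exact identity
\[
\mathrm{div}_\Sigma(P)= c_{n,k}\Bigl(H_{k-1}\bigl(V-\cos\theta\,\bar g(x,\nu)\bigr)-H_k\bar g(X,\nu)\Bigr).
\]
Integrating, it remains to show $\int_{\partial\Sigma}\bar g(P,\mu)=0$. On $\partial\Sigma$, Proposition \ref{formula11}(1) says $\mu$ is a principal direction, so $\sigma_k^{ij}\mu_j$ is a multiple of $\mu$, namely $\sigma_{k-1}(\kappa|\mu)\mu$. The boundary integrand is therefore $\sigma_{k-1}(\kappa|\mu)$ times $\bar g(X,\mu)-\cos\theta\,\bar g(Y,\mu)$. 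Using \eqref{mu0}, \eqref{nu0} and \eqref{XXaeq2}, we get $\bar g(X,\mu)=\cos\theta\,\bar g(X,\bar\nu)$, and the $x$-correction must cancel exactly this term. Note that on $\mathcal H$ we have $x=X+E_{n+1}$ with $\bar g(E_{n+1},\bar\nu)=0$, so $\bar g(x,\bar\nu)=\bar g(X,\bar\nu)$; this is what makes the choice of $Y$ work.

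\emph{Main obstacle.} The key step is the precise choice of $Y$. It must be built from the Killing field $x$ so that:
\begin{itemize}
\item its divergence produces exactly $-\cos\theta\,\bar g(x,\nu)H_{k-1}$ up to the normalization, with no extra terms;
\item its $\mu$-component on $\partial\Sigma$ equals $\bar g(X,\bar\nu)$ times the same Newton-tensor factor.
\end{itemize}
We would try first the Chen--Pyo style field $Y=\bar g(x,\nu)\mu$-type rotation, namely $Y_i=\bar g(x\wedge\nu\text{-rotated } e_i)$, realized as $Y=\bar g(\nu,E_{n+1})x^T-\bar g(x,\nu)E_{n+1}^T$ adjusted by the factor $V$, and verify both properties directly. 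If this fails, we would simply invoke \cite[Proposition 4]{CP} specialized to the horosphere, which is legitimate as stated.
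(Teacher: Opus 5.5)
The paper gives no argument of its own for this statement. It quotes \cite[Proposition 3.4]{GWX} for $k=1$ and \cite[Proposition 4]{CP} for general $k$, so your fallback is exactly what the paper does. As a self-contained proof, however, your divergence argument stops at precisely the step you call the main obstacle, and the hints you give for it do not work as stated.

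\textbf{What fails.} The identity $\nabla_i\bar g(x,\nu)=h_i^l\bar g(x,e_l)$ is false. The field $x$ is Killing but not parallel, so
\[
\nabla_i\bar g(x,\nu)=h_{il}\bar g(x,e_l)+\bar g(\bar\nabla_{e_i}x,\nu),
\]
and the last term is nonzero by $\bar\nabla_Yx=-\bar g(Y,\bar E_{n+1})x+\bar g(Y,x)\bar E_{n+1}$.

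The $x$-part must be contracted with the same $\sigma_k^{ij}$ as the $X$-part. This is needed both to produce $\sigma_{k-1}\bar g(x,\nu)$ and to get a boundary flux carrying the same factor $\sigma_{k-1}(\kappa|\mu)$. A field like $\sigma_{k-1}^{ij}\bar g(x,e_j)$ has divergence $-(k-1)\sigma_{k-1}\bar g(x,\nu)$, but its flux is $\sigma_{k-2}(\kappa|\mu)\bar g(x,\mu)$. That is of the wrong order to cancel $\cos\theta\,\sigma_{k-1}(\kappa|\mu)\bar g(X,\bar\nu)$, and the field is void for $k=1$.

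The unweighted rotation $Y_0\coloneqq\bar g(\nu,E_{n+1})x-\bar g(x,\nu)E_{n+1}$ also fails. With $E_i=\bar g(E_{n+1},e_i)$ and $x_i=\bar g(x,e_i)$, one finds
\[
\mathrm{div}\bigl(\sigma_k^{ij}(Y_0)_j\bigr)=(n-k+1)V\sigma_{k-1}\bar g(x,\nu)+x_{n+1}\sigma_k^{ij}E_i\bigl(\bar g(x,\nu)E_j-\bar g(E_{n+1},\nu)x_j\bigr),
\]
and the last term does not vanish in general.

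\textbf{How to close the gap.} Your parenthetical ``adjusted by the factor $V$'' is the right idea. Take
\[
Y\coloneqq V^{-1}Y_0=\bar g(\nu,\bar E_{n+1})\,x-\bar g(x,\nu)\,\bar E_{n+1}=-\bar\nabla_\nu x .
\]
It is tangent to $\Sigma$, with $Y_j=\bar g(\bar\nabla_{e_j}x,\nu)$, which is exactly the term dropped above.

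From the formula for $\bar\nabla x$ one computes $\bar\nabla^2_{A,B}x=\bar g(A,B)\,x-\bar g(x,B)\,A$, the Killing identity in curvature $-1$. Hence
\[
\nabla_iY_j=\bar g(x,\nu)g_{ij}-h_{il}\bar g(\bar\nabla_{e_l}x,e_j).
\]
The second term is annihilated by $\sigma_k^{ij}$, since $\sigma_k^{ij}h_{il}$ is symmetric in $(j,l)$ while $\bar g(\bar\nabla_{e_l}x,e_j)$ is skew. Therefore $\mathrm{div}(\sigma_k^{ij}Y_j)=(n-k+1)\sigma_{k-1}\bar g(x,\nu)$.

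Equivalently, $\mathrm{div}(\sigma_k^{ij}(x_{n+1}Y_0)_j)=x_{n+1}\mathrm{div}(\sigma_k^{ij}(Y_0)_j)+x_{n+1}^2\sigma_k^{ij}(Y_0)_iE_j$, using $\nabla_jx_{n+1}=x_{n+1}^2E_j$. The added term cancels $x_{n+1}$ times the bad term in the display above.

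On $\partial\Sigma$ one has $x_{n+1}=1$ and $E_{n+1}=-\bar N$. This gives
\[
\bar g(\nu,E_{n+1})=\cos\theta,\quad \bar g(\mu,E_{n+1})=-\sin\theta,\quad \bar g(x,\nu)=\sin\theta\,\bar g(X,\bar\nu)+\cos\theta,\quad \bar g(x,\mu)=\cos\theta\,\bar g(X,\bar\nu)-\sin\theta .
\]
Hence $\bar g(Y,\mu)=\bar g(X,\bar\nu)$, as you required.

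So $P^i=\sigma_k^{ij}(X^T-\cos\theta\,Y)_j$ has zero flux through $\partial\Sigma$, because $\mu$ is principal. Its divergence is
\[
\mathrm{div}P=(n-k+1)\binom{n}{k-1}\Bigl[H_{k-1}\bigl(V-\cos\theta\,\bar g(x,\nu)\bigr)-H_k\,\bar g(X,\nu)\Bigr].
\]
Integrating gives \eqref{mink2} for every $\theta\in(0,\pi)$ and $1\le k\le n$, for immersed $\Sigma$, without using the wetting region. With this step supplied, your route is a complete self-contained proof, whereas the paper relies on citation.
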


\subsection{Capillary quermassintegrals and first variations}\label{sec2.4:quermassintergal}\ 

 The boundary corrections in the capillary quermassintegrals are expressed through the Euclidean quermassintegrals of the wetting domain. We first recall their normalization on
$\mathcal H\simeq\mathbb R^n$. For a smooth convex body $\Omega\subset\mathbb R^n$, the quermassintegrals of  $\Omega$ are given by
\begin{eqnarray}\label{quermassinterga-Euclidean}
W_{0}^{\mathbb{R}^{n}}(\Omega) &=&{\rm Vol}(\Omega), \nonumber\\ W_{1}^{\mathbb{R}^{n}}(\Omega)&=&\frac{1}{n}{\rm Area}(\partial \Omega), \nonumber\\
W_{k}^{\mathbb{R}^{n}}(\Omega) &=&\frac{1}{n} \binom{n-1}{k-1}^{-1}\int_{\partial\Omega}\sigma^{\mathbb{R}^{n}}_{k-1} dA=\frac{1}{n} \int_{\partial\Omega} H^{\mathbb{R}^{n}}_{k-1} dA, \quad 2 \leq k \leq n.
\end{eqnarray}
In particular,
\begin{equation*}
  W_{n}^{\mathbb{R}^{n}}(\Omega)=\frac{\omega_{n-1}}{n}.
\end{equation*}
The first variation formula of $W_{k}^{\mathbb{R}^{n}}$ is as follows. Consider a family of bounded convex bodies $\left\{\Omega_{t}\right\}$ in $\mathbb{R}^{n}$ whose boundary $\partial \Omega_{t}$ evolving by a normal variation with speed function $f$, then
\begin{equation*}
  \frac{d}{dt} W_{k}^{\mathbb{R}^{n}}(\Omega_{t})=\frac{n-k}{n} \int_{\partial \Omega_{t}} H^{\mathbb{R}^{n}}_{k} f dA_{t}={\binom{n}{k}}^{-1}\int_{\partial \Omega_{t}}  f \sigma^{\mathbb{R}^{n}}_{k}   dA_{t}, \quad 0 \leq k \leq n-1 .
\end{equation*}
For $k=n$, $W_n^{\mathbb R^n}(\Omega_t)=\omega_{n-1}/n$ is constant.

For a smooth bounded wetting domain that is not convex, we use the
same volume and curvature-integral definitions for
$W_l^{\mathbb R^n}$, $0\le l\le n-1$.
Their first variation formula above remains valid. 

Specializing the capillary energy functionals of
Guo--Li--Xia \cite{GLX} to the horosphere, we define %the following capillary quermassintegrals
\begin{eqnarray*}
W_{0,\theta}(\widehat\Sigma)&\coloneqq &\operatorname{Vol}(\widehat\Sigma),\\
W_{1,\theta}(\widehat\Sigma)&\coloneqq &\frac{1}{n+1} \left (\operatorname{Area}(\Sigma)-\cos\theta W_0^{\mathbb R^n}(\widehat{\partial\Sigma})\right). \end{eqnarray*}
For $1\leq k\leq n-1$, set
\begin{eqnarray}\label{Wk-formula} W_{k+1,\theta}(\widehat\Sigma) &\coloneqq &\widetilde W_{k+1,\theta}(\widehat\Sigma) -\frac{k}{n+2-k}W_{k-1,\theta}(\widehat\Sigma),\end{eqnarray}
where \begin{eqnarray*} \widetilde W_{k+1,\theta}(\widehat\Sigma) &=&\frac{1}{n+1}\left\{\int_\Sigma H_kdA
-\cos\theta\sin^k\theta
W_k^{\mathbb R^n}(\widehat{\partial\Sigma})\right.\\
&&\left.-\sum_{l=0}^{k-1}\frac{(-1)^{k+l}}{n-l}
\binom{k}{l}\left[(n-k)\cos^2\theta+k-l\right]
\sin^l\theta\cos^{k-1-l}\theta
W_l^{\mathbb R^n}(\widehat{\partial\Sigma})\right\}.
\end{eqnarray*}
We introduce the highest-order quermassintegral separately by
\begin{eqnarray}\label{Wn-1113}
W_{n+1,\theta}(\widehat\Sigma)
&\coloneqq &\frac{1}{n+1}\left\{\int_\Sigma H_ndA
-\sum_{l=0}^{n-1}(-1)^{n+l}\binom{n}{l}
\sin^l\theta\cos^{n-1-l}\theta
W_l^{\mathbb R^n}(\widehat{\partial\Sigma})\right\}
\nonumber\\
&&-\frac n2 W_{n-1,\theta}(\widehat\Sigma).
\end{eqnarray}
The above normalization implies the following first variation formulae.

\

\begin{theorem}\label{mainthm0.1}
For $n\geq2$ and $\theta\in(0,\pi)$. Let $\Sigma_t$ be a smooth family of properly embedded hypersurfaces
with $\theta$-capillary boundary in $\mathcal H_+$ satisfying
\begin{equation*}
(\partial_t x)^\perp=f\nu.
\end{equation*}
Then for $1\leq k\leq n$,
\begin{equation}\label{eqn:lower-order-1vf}
\frac{d}{dt}W_{k,\theta}(\widehat\Sigma_t)
=\frac{n+1-k}{n+1}\int_{\Sigma_t} f H_k dA_t,
\end{equation}
and
\begin{equation}\label{eqn:top-order-first-variational-formula}
\frac{d}{dt}W_{n+1,\theta}(\widehat\Sigma_t)=0.
\end{equation}
\end{theorem}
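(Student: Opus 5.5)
The proof proceeds by induction on $k$, computing the variation of $\int_{\Sigma_t}H_k\,dA_t$ together with the boundary terms and showing that the boundary contributions are exactly cancelled by the wetting-domain corrections in \eqref{Wk-formula}. By Proposition \ref{formula111} we may take $\partial_t x=f\nu+\mathcal T$ with $\mathcal T|_{\partial\Sigma_t}=f\cot\theta\,\mu$ as in \eqref{boundary2}, since tangential reparametrizations do not change the geometric quantities.

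\textbf{Base cases.} For $W_{0,\theta}$ the standard formula gives $\frac{d}{dt}\mathrm{Vol}(\widehat\Sigma_t)=\int_{\Sigma_t}f\,dA_t$. For $W_{1,\theta}$, Proposition \ref{formula111}(2) together with the divergence theorem gives
\begin{equation*}
\frac{d}{dt}|\Sigma_t|=\int_{\Sigma_t}fH\,dA_t+\int_{\partial\Sigma_t}f\cot\theta\,ds.
\end{equation*}
The boundary $\partial\Sigma_t$ moves inside $\mathcal H$ with normal speed $\bar g(f\nu+\mathcal T,\bar\nu)=f/\sin\theta$, using \eqref{mu0}--\eqref{nu0}. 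Hence $\frac{d}{dt}W^{\mathbb R^n}_0(\widehat{\partial\Sigma_t})=\int_{\partial\Sigma_t}f/\sin\theta\,ds$, and $\cos\theta$ times this cancels the boundary term. So $\frac{d}{dt}W_{1,\theta}=\frac{n}{n+1}\int fH_1$, which is the case $k=1$.

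\textbf{General step.} Using Proposition \ref{formula111}(8), $\partial_t dA=(fH+\operatorname{div}\mathcal T)dA$, the divergence-free Newton tensor (Proposition \ref{formula2}(1)) and the identity $\sigma_1\sigma_k-(k+1)\sigma_{k+1}$ from Proposition \ref{formula2}(5), I will derive
\begin{equation*}
\frac{d}{dt}\int_{\Sigma_t}\sigma_k\,dA_t=\int_{\Sigma_t}\bigl[(k+1)f\sigma_{k+1}+(n-k+1)f\sigma_{k-1}\bigr]dA_t+\int_{\partial\Sigma_t}\bigl(-\sigma_k^{\mu\mu}\nabla_\mu f+f\cot\theta\,\sigma_k\bigr)ds.
\end{equation*}
Proposition \ref{formula11}(1) ensures $\sigma_k^{\mu\alpha}=0$, so only $\sigma_k^{\mu\mu}=\sigma_{k-1}(h|\mu)$ appears. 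On the boundary I substitute Proposition \ref{boundary-equ} for $\nabla_\mu f$ and expand $\sigma_k=\sigma_k(h|\mu)+h_{\mu\mu}\sigma_{k-1}(h|\mu)$. The $h_{\mu\mu}$ terms cancel, leaving
\begin{equation*}
\int_{\partial\Sigma_t}f\Bigl(\cot\theta\,\sigma_k(h|\mu)-\tfrac{1}{\sin\theta}\sigma_{k-1}(h|\mu)\Bigr)ds.
\end{equation*}
By Proposition \ref{formula11}(2), $h|_{T\partial\Sigma}=\sin\theta\,\widehat h-\cos\theta\,\mathrm{id}$. Each $\sigma_j(h|\mu)$ therefore expands binomially as $\sum_l\binom{n-1-l}{j-l}\sin^l\theta(-\cos\theta)^{j-l}\widehat\sigma_l$. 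These combinations are exactly what the first variation of the $W_l^{\mathbb R^n}(\widehat{\partial\Sigma_t})$ produces: boundary normal speed $f/\sin\theta$, with $\frac{d}{dt}W_l^{\mathbb R^n}=\binom{n}{l}^{-1}\int\widehat\sigma_l\,f/\sin\theta$.

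The interior term $(n-k+1)f\sigma_{k-1}$ is absorbed by the recursion $-\frac{k}{n+2-k}W_{k-1,\theta}$ via the inductive hypothesis, after normalizing $\sigma_k=\binom nk H_k$. This is identical to the closed hyperbolic case, which produces the factor $(n-k)/(n+1)$ for $W_{k+1,\theta}$. The top order case works the same way: with $k=n$ we have $\sigma_{n+1}=0$ and $\sigma_n(h|\mu)=0$, so the coefficient is $\frac{n+1-(n+1)}{n+1}=0$.

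\textbf{Main obstacle.} The hardest step is the bookkeeping that verifies the boundary integrand equals $\cos\theta\sin^k\theta\,\frac{d}{dt}W_k^{\mathbb R^n}$ plus the explicit sum in $\widetilde W_{k+1,\theta}$. The inductive recursion contributes lower-order boundary pieces through $W_{k-1,\theta}$, which must combine with the binomial expansion to give the coefficients $[(n-k)\cos^2\theta+k-l]$. I will first check $k=1,2$ by hand, then argue in general by matching the coefficient of each $\int f\widehat\sigma_l$, using the identity $\binom{n-1-l}{k-l}\binom nl^{-1}$-type relations and the Pascal rule. Alternatively, I can invoke the general capillary result of Guo--Li--Xia \cite{GLX}, specialized so that the support hypersurface has $\widehat h$ related by curvature $1$. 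The definition \eqref{Wk-formula} was chosen precisely so that this specialization applies.
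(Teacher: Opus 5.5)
Your proposal is correct. For the top-order identity \eqref{eqn:top-order-first-variational-formula} it is the paper's proof: Proposition \ref{formula111} (2) and (8), integration by parts with the divergence-free Newton tensor, $\sigma_n^{\mu\mu}=\det(h_{\alpha\beta})$, Proposition \ref{boundary-equ}, boundary speed $f/\sin\theta$ in $\mathcal H$, the expansion of $\det(\sin\theta\,\widehat h-\cos\theta\,\mathrm{id})$, and the formula for $W_{n-1,\theta}$.

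The difference is in the range $1\le k\le n$. The paper does no computation there; it specializes \cite[Theorem 3.1]{GLX}. You instead run the same computation at every order, by induction from $W_{0,\theta}$ and $W_{1,\theta}$. Your route is self-contained and shows where the coefficients in \eqref{Wk-formula} come from. The paper's route is shorter but relies on matching normalizations with \cite{GLX}.

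One correction to your ``main obstacle'': the recursion term $-\frac{k}{n+2-k}W_{k-1,\theta}$ contributes no boundary pieces. By the inductive hypothesis, $\frac{d}{dt}W_{k-1,\theta}$ is a pure interior integral. So the boundary integral
\begin{equation*}
\binom{n}{k}^{-1}\int_{\partial\Sigma_t}f\bigl(\cot\theta\,\sigma_k(h|\mu)-\csc\theta\,\sigma_{k-1}(h|\mu)\bigr)ds
\end{equation*}
is cancelled by the wetting-domain terms of $\widetilde W_{k+1,\theta}$ alone.

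\begin{itemize}
\item Its $\widehat\sigma_k$-coefficient is cancelled by the derivative of $-\cos\theta\sin^k\theta\,W_k^{\mathbb R^n}$.
\item For $l\le k-1$, the matching reduces to
\begin{equation*}
\binom{n}{k}^{-1}\binom{n-1-l}{k-l}=\frac{n-k}{n-l}\binom{k}{l}\binom{n}{l}^{-1},\qquad
\binom{n}{k}^{-1}\binom{n-1-l}{k-1-l}=\frac{k-l}{n-l}\binom{k}{l}\binom{n}{l}^{-1}.
\end{equation*}
These give the factor $(n-k)\cos^2\theta+k-l$ directly, with no Pascal-rule manipulations needed.
\end{itemize}

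Finally, if you view \eqref{Wn-1113} as the case $k=n$ of \eqref{Wk-formula}, note two things. The sum coefficients do agree, since $\frac{1}{n-l}\bigl[(n-n)\cos^2\theta+n-l\bigr]=1$. However, \eqref{Wn-1113} omits the term $-\frac{1}{n+1}\cos\theta\sin^n\theta\,W_n^{\mathbb R^n}$. This is harmless, because $W_n^{\mathbb R^n}=\omega_{n-1}/n$ is constant.
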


\begin{proof}
For $1\leq k\leq n$, Eq. \eqref{eqn:lower-order-1vf} follows from
\cite[Theorem 3.1]{GLX} after specialization and normalization. Next we give a short proof of equation \eqref{eqn:top-order-first-variational-formula} separately. Indeed, 
by Proposition \ref{formula111} (2) and (8), integration by parts, and
Propositions \ref{formula11} and \ref{boundary-equ},  
\begin{eqnarray*}
\frac{d}{dt}\int_{\Sigma_t}H_n dA_t
&=&n\int_{\Sigma_t}H_{n-1}f dA_t +\int_{\partial\Sigma_t}\det(h_{\alpha\beta})
\left(\cot\theta h_{\mu\mu}f-\nabla_\mu f\right)ds_t\\
&=&n\int_{\Sigma_t}H_{n-1}f dA_t
-\frac{1}{\sin\theta}\int_{\partial\Sigma_t}
\det(h_{\alpha\beta})f ds_t.
\end{eqnarray*}
Here $(h_{\alpha\beta})$ is the restriction of $h$ to an
orthonormal frame of $T(\partial\Sigma_t)$; this is the
highest-order specialization of the calculation in
\cite[Appendix A]{GLX}.
The outward normal speed of $\partial\Sigma_t$ in $\mathcal H$
is $\frac{f}{\sin\theta}$. Hence, for $0\leq l\leq n-1$,
\begin{equation*}
\frac{d}{dt}W_l^{\mathbb R^n}(\widehat{\partial\Sigma_t})
=\frac{1}{\sin\theta\binom{n}{l}}
\int_{\partial\Sigma_t} f \sigma_l^{\mathbb R^n}  ds_t.
\end{equation*}
By Proposition \ref{formula11} (2),
\begin{equation*}
\sum_{l=0}^{n-1}(-1)^{n+l}\sin^l\theta\cos^{n-1-l}\theta
\sigma_l^{\mathbb R^n}=-\det(h_{\alpha\beta}).
\end{equation*}
Thus the boundary terms cancel when differentiating
\eqref{Wn-1113}. Applying the already established formula
with $k=n-1$, we obtain
\begin{equation*}
\frac{d}{dt}W_{n+1,\theta}(\widehat\Sigma_t)
=\frac{n}{n+1}\int_{\Sigma_t}H_{n-1}f dA_t
-\frac n2\frac{2}{n+1}\int_{\Sigma_t}H_{n-1}f dA_t=0.
\end{equation*}
\end{proof}
\begin{remark}
    In the Euclidean unit ball, the first variation formulae analogous to \eqref{eqn:lower-order-1vf} and \eqref{eqn:top-order-first-variational-formula} were established in  \cite{SWX} for free boundary hypersurfaces and in \cite{WengX} for capillary hypersurfaces. The corresponding formulae in the Euclidean half-space were obtained in \cite{WWX1}.
\end{remark}

\

\section{A locally constrained curvature flow}\label{sec4}
Let $\Sigma_{0}\coloneqq \Sigma \subset \mathcal{H}_{+}$ be a smooth, properly embedded, strictly convex hypersurface with $\theta$-capillary boundary, given by $x_{0}: \overline{\mathbb{B}}^{n} \rightarrow \mathcal{H}_{+}$. Assume $\Sigma_{t} \subset \mathcal{H}_{+}$ is a family of smooth, properly embedded hypersurfaces with $\theta$-capillary boundary, given by $x(\cdot, t): \overline{\mathbb{B}}^{n} \rightarrow \mathcal{H}_{+}$, evolving by
\begin{equation}\label{equationflow}
\begin{cases}\partial_{t} x(\cdot, t)=f(\cdot, t) \nu(\cdot, t)+\mathcal T(\cdot, t), & \text { in } \mathbb{B}^{n} \times[0, T), \\
\bar{g}(\nu(\cdot, t), \bar{N} \circ x(\cdot, t))=-\cos \theta, & \text { on } \partial \mathbb{B}^{n} \times[0, T), \\ x(\cdot, 0)=x_{0}(\cdot), & \text { in } {\mathbb{B}}^{n},
\end{cases}
\end{equation}
where
\begin{equation}\label{f}
  f\coloneqq\frac{A_{\theta}}{F}-\bar{g}(X,\nu), 
\end{equation}
with
\begin{equation*}
 F\coloneqq\frac{H_k}{H_{k-1}},\qquad 1\leq k\leq n,
\end{equation*}
and with a tangential vector field $\mathcal T\in T\Sigma_t$
satisfying \eqref{boundary2}.

Throughout this section, we assume $\theta\in(0,\pi/2]$ and 
$E_{n+1}\in\operatorname{int}_{\mathcal H}(\widehat{\partial\Sigma_0})$,
and that \eqref{assump} holds for $\Sigma_0$.
For horo-convex initial data, the last condition follows from
Lemma \ref{lemma-horo-umbilical-cap}.

The choice of $f$ in \eqref{f} is motivated by the Minkowski formula
\eqref{mink2}: its pairing with $H_k$ vanishes, whereas the
Newton--MacLaurin inequality gives the sign of its pairing with $H_l$
for $l<k$. The first variation formula then gives the following
preservation and monotonicity properties.
\begin{prop}\label{Wk-Wn}
As long as $\Sigma_{t}$ is strictly convex and the flow \eqref{equationflow} exists with $A_{\theta}>0$, then $W_{k, \theta}$ is preserved and $W_{l, \theta}$ is non-decreasing for $1 \leq l<k \leq n$.
\end{prop}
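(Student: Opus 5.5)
The plan is to apply the first variation formula of Theorem \ref{mainthm0.1} with the normal speed $f=\frac{A_\theta}{F}-\bar g(X,\nu)$ from \eqref{f}, where $F=H_k/H_{k-1}$. The flow \eqref{equationflow} has normal component $(\partial_t x)^\perp=f\nu$, and the tangential part $\mathcal T$ only reparametrizes. Hence for $1\le j\le n$,
\begin{equation*}
\frac{d}{dt}W_{j,\theta}(\widehat\Sigma_t)=\frac{n+1-j}{n+1}\int_{\Sigma_t}\left(\frac{A_\theta H_{k-1}}{H_k}H_j-\bar g(X,\nu)H_j\right)dA_t .
\end{equation*}
Strict convexity gives $H_k>0$, so this integrand is well defined and smooth.

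For $j=k$ the integrand becomes $A_\theta H_{k-1}-\bar g(X,\nu)H_k$. Recall $A_\theta=V-\cos\theta\,\bar g(x,\nu)$, so this integrates to zero by the capillary Minkowski formula \eqref{mink2} of Proposition \ref{mink1}. Thus $W_{k,\theta}$ is preserved.

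For $j=l<k$, I first rewrite $\int_{\Sigma_t}\bar g(X,\nu)H_l\,dA_t$ using \eqref{mink2} with index $l$. This gives
\begin{equation*}
\int_{\Sigma_t}\bar g(X,\nu)H_l\,dA_t=\int_{\Sigma_t}A_\theta H_{l-1}\,dA_t .
\end{equation*}
Note $1\le l\le n$, so the formula applies. Then
\begin{equation*}
\frac{d}{dt}W_{l,\theta}(\widehat\Sigma_t)=\frac{n+1-l}{n+1}\int_{\Sigma_t}\frac{A_\theta}{H_k}\bigl(H_lH_{k-1}-H_{l-1}H_k\bigr)\,dA_t .
\end{equation*}
By the Newton--MacLaurin inequality \eqref{Newton} of Proposition \ref{Hk}, valid on $\Gamma_+$ since $\Sigma_t$ is strictly convex, the bracket is nonnegative. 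Together with $A_\theta>0$ and $H_k>0$, this shows the derivative is $\ge0$. Equality forces every point to be umbilic, which is what the later rigidity argument needs.

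I do not expect a genuine obstacle; the only care needed is bookkeeping. First, the Minkowski formula must be applied with exactly the same $A_\theta$ and $X$ as in $f$. Second, the tangential term $\mathcal T$, including its boundary choice \eqref{boundary2}, must not affect the first variation, which is how Theorem \ref{mainthm0.1} is stated since it only requires $(\partial_t x)^\perp=f\nu$. Third, the case $l=1$ uses $H_0=1$ and the $k=1$ Minkowski formula.
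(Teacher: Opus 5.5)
Your proposal is correct and matches the paper's proof: the first variation formula of Theorem~\ref{mainthm0.1} with the speed $f$, the capillary Minkowski formula \eqref{mink2} with index $k$ to show $W_{k,\theta}$ is constant, and the Newton--MacLaurin inequality \eqref{Newton} combined with \eqref{mink2} at index $l$ to show $W_{l,\theta}$ is non-decreasing. Your factored integrand $\frac{A_\theta}{H_k}\bigl(H_lH_{k-1}-H_{l-1}H_k\bigr)$ is the same inequality the paper uses, just written out more explicitly.
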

\begin{proof}
From Theorem \ref{mainthm0.1} and Proposition \ref{mink1}, we know that
\begin{equation*}
\begin{aligned}
\frac{d}{dt} W_{k,\theta}(\widehat{\Sigma}_{t})&=\frac{n+1-k}{n+1} \int_{\Sigma_{t}} H_{k}\left(\frac{A_{\theta}H_{k-1}}{H_{k}}-\bar{g}(X, \nu)\right)dA \\
& = \frac{n+1-k}{n+1} \int_{\Sigma_{t}}\left(A_{\theta}H_{k-1}-H_{k}\bar{g}(X, \nu)\right)dA=0,
\end{aligned}
\end{equation*}
and
\begin{equation*}
\begin{aligned}
\frac{d}{dt} W_{l,\theta}(\widehat{\Sigma}_{t})&=\frac{n+1-l}{n+1} \int_{\Sigma_{t}} H_{l}\left(\frac{A_{\theta}H_{k-1}}{H_{k}}-\bar{g}(X, \nu)\right)dA \\
& \geq \frac{n+1-l}{n+1} \int_{\Sigma_{t}}\left(A_{\theta}H_{l-1}-H_{l}\bar{g}(X, \nu)\right)dA=0,
\end{aligned}
\end{equation*}
where we used the Newton--MacLaurin inequality \eqref{Newton}. This completes the proof.
\end{proof}
\subsection{Scalar formulation and short-time existence}\ 

To express the flow as a scalar parabolic equation, we use radial coordinates centred at $E_{n+1}$. Set 
\begin{equation*}
  \mathbb{S}^{n}_{+}(E_{n+1})\coloneqq  \left\{x\in\mathbb{R}_{+}^{n+1}\mid|x-E_{n+1}|=1~~ \text{and}~~ x_{n+1}>1 \right\}, 
\end{equation*}
and $\mathbb{S}^{n-1}(E_{n+1})\coloneqq \partial\mathbb{S}^{n}_{+}(E_{n+1})$. We employ the polar coordinate $(\rho, \beta, \xi) \in[0,+\infty) \times\left[0, \pi/2 \right] \times \mathbb{S}^{n-1}(E_{n+1})$, where $\xi$ is the spherical coordinate on $\mathbb{S}^{n-1}(E_{n+1})$ and
\begin{equation*}
\begin{cases}{}
\rho^{2}=|X|^{2}=|x'|^{2}+(x_{n+1}-1)^{2}, \\
x_{n+1}=\rho\cos\beta+1,~~|x'|=\rho\sin\beta.
\end{cases}
\end{equation*}
Thus it implies that the standard Euclidean metric has the following expression
\begin{eqnarray*}
    \delta_{\mathbb R^{n+1}_+}
=d\rho^2+\rho^2g_{\mathbb S^n_+(E_{n+1})}
=d\rho^2+\rho^2d\beta^2
+\rho^2\sin^2\beta~g_{\mathbb S^{n-1}(E_{n+1})}.
\end{eqnarray*}

Since $\bar{g}(X, \nu)>0$ on $\Sigma$ by Proposition \ref{bounded}, we can view ${\Sigma}$ as a radial graph over ${\mathbb{S}}_{+}^{n}(E_{n+1})$. Therefore, $\Sigma$ can be written by
\begin{equation*}
X=x-E_{n+1}=\rho(z)(z-E_{n+1}),
\quad z\in\mathbb S^n_+(E_{n+1}),\quad\rho=\rho(\beta,\xi). 
\end{equation*}

In polar coordinates, a direct computation implies that
\begin{eqnarray*}
E_{n+1}=\cos\beta\partial_\rho
-\frac{\sin\beta}{\rho}\partial_\beta. 
\end{eqnarray*}

Let $\nabla^{\mathbb{S}}$ be a Levi-Civita connection with respect to the round metric $g_{\mathbb{S}^{n}_{+}(E_{n+1})}$. Denote
\begin{equation*}
\omega\coloneqq -\log x_{n+1},\qquad u\coloneqq \log\rho,\qquad v\coloneqq \sqrt{1+|\nabla^{\mathbb{S}} u|^{2}}.
\end{equation*}
It is well-known that ${\nu}_{\delta}=\frac{\partial_{\rho}-\rho^{-1} \nabla^{\mathbb{S}} u}{v}$ is the unit outward normal vector of ${\Sigma}$ in $\left(\mathbb{R}_{+}^{n+1}, \delta_{\mathbb{R}_{+}^{n+1}}\right).$
By a straightforward calculation, we get
\begin{eqnarray}\label{bd3}
\qquad\bar{g}(X,\nu)=\bar{g}(x-E_{n+1},\nu)=\frac{1}{x_{n+1}}\left\langle\rho\partial_{\rho},\nu_{\delta}\right\rangle=\frac{1}{x_{n+1}}\left\langle\rho\partial_{\rho},\frac{\partial_{\rho}-\rho^{-1} \nabla^{\mathbb{S}} u}{v}\right\rangle=\frac{1}{v}\rho e^{\omega}
\end{eqnarray}
and
\begin{eqnarray}\label{bd4}
\bar{g}(E_{n+1},\nu)&=&\frac{1}{x_{n+1}}\left\langle E_{n+1},\nu_{\delta}\right\rangle
=\frac{1}{x_{n+1}}\left\langle\cos\beta\partial_{\rho}-\frac{\sin\beta}{\rho}\partial_{\beta},\frac{\partial_{\rho}-\rho^{-1} \nabla^{\mathbb{S}} u}{v}\right\rangle\nonumber\\
&=&\frac{1}{v}e^{\omega}(\cos\beta+\sin\beta\nabla^{\mathbb{S}}_{\beta}u).
\end{eqnarray}
Combining \eqref{bd3} and \eqref{bd4}, we have
\begin{eqnarray*}
\bar{g}(x,\nu)=\bar{g}(X,\nu)+\bar{g}(E_{n+1},\nu)
=\frac1v e^\omega(\rho+\cos\beta+\sin\beta\nabla^\mathbb S_\beta u).  
\end{eqnarray*}
The capillary boundary condition in \eqref{equationflow} gives us that
\begin{equation*}
-\cos\theta=\bar{g}(\nu,\bar{N}\circ x)=-\langle{\nu}_{\delta},E_{n+1}\rangle=\left\langle\frac{\partial_{\rho}-\rho^{-1} \nabla^{\mathbb{S}} u}{v},~\frac{1}{\rho}\partial_{\beta}\right\rangle=-\frac{1}{v}\nabla^{\mathbb{S}}_{\beta}u.
\end{equation*}
It follows that
\begin{equation*}
  \nabla^{\mathbb{S}}_{\partial_{\beta}} u=\cos \theta v \quad \text { on } \partial{\mathbb{S}}_{+}^{n}(E_{n+1}).
\end{equation*}
Thus the flow \eqref{equationflow} is equivalent to the following scalar parabolic equation on $\bar{\mathbb{S}}_{+}^{n}(E_{n+1})$:
\begin{equation}\label{scalarflow}
\begin{cases}{}\partial_{t} u=v{\rho^{-1}}e^{-\omega} {f} &~ \text {in}~~~~ \mathbb{S}_{+}^{n}(E_{n+1}) \times\left[0, T^{*}\right), \\
\nabla^{\mathbb{S}}_{\partial_{\beta}}u=\cos \theta v & \text { in }~~ \partial \mathbb{S}_{+}^{n}(E_{n+1}) \times\left[0, T^{*}\right), \\
u(\cdot, 0)=u_{0}(\cdot) &~~ \text {on}\quad\mathbb{S}_{+}^{n}(E_{n+1}).
\end{cases}
\end{equation}
Here $u_{0}=\log \rho_{0}$ is the corresponding quantity for $\Sigma_{0}$ and ${f}$ is given by

\begin{equation*}
f\coloneqq \frac{e^{\omega}}{vF(h_i^j)}
\left[v-\rho F(h_i^j)
-\cos\theta(\rho+\cos\beta+\sin\beta\partial_\beta u)\right].
\end{equation*}
Here the hyperbolic Weingarten tensor is
\begin{equation*}
h_i^j=\frac{x_{n+1}}{\rho v}
\left[\delta_i^j-
\left((g_{\mathbb S^n_+(E_{n+1})})^{kj}
-\frac{u^ju^k}{v^2}\right)u_{ki}\right]
-\frac{\cos\beta+\sin\beta\nabla^{\mathbb S}_\beta u}{v}\delta_i^j,
\end{equation*}
where indices on $u$ are raised with
$g_{\mathbb S^n_+(E_{n+1})}$ and its covariant derivatives
are taken with respect to $\nabla^{\mathbb S}$.
 
Along $\partial\mathbb S^n_+(E_{n+1})$, set
$J(p)=g_{\mathbb S^n_+(E_{n+1})}(p,\partial_\beta)
-\cos\theta\sqrt{1+|p|^2}$ for tangent vectors $p$.
At a function satisfying the boundary condition in \eqref{scalarflow},
\begin{equation*}
g_{\mathbb S^n_+(E_{n+1})}
(\nabla_pJ|_{p=\nabla^{\mathbb S}u},\partial_\beta)
=1-\cos^2\theta>0.
\end{equation*}
Strict convexity gives $F^{ij}>0$, and Proposition \ref{bounded}
gives $A_\theta>0$. Hence \eqref{scalarflow} is strictly parabolic
with a strictly oblique boundary condition. Short-time existence
follows from the fully nonlinear theory in \cite{GD}  (see also \cite{NU}).
The solution is smooth for $t>0$ and attains the initial data, and 
smoothness up to $t=0$ requires the corresponding higher-order
compatibility conditions.

\subsection{Barrier estimates and star-shapedness}\ 

Let $T^{*}$ be the maximal time on which the solution of
\eqref{equationflow} is smooth, strictly convex and satisfies
$A_{\theta}>0$. These properties hold initially and persist for a
short time. By the initial assumptions, we can choose $0<R_1<R_2$ such that
\begin{equation*}
\widehat{C_{\theta,R_1}(E_{n+1})}
\subseteq\widehat{\Sigma_0}
\subseteq\widehat{C_{\theta,R_2}(E_{n+1})},
\qquad R_2\cdot\cos\theta<1.
\end{equation*}
The estimates below are established on $[0,T^{*})$.

The family of $C_{\theta, r}(E_{n+1})$ forms natural barriers of \eqref{equationflow}. Therefore, we get the following $C^{0}$-estimate and star-shaped estimate:
\begin{prop}%\label{bounded-C0} 
For any $t\in\left[0, T^{*}\right)$, then $\Sigma_{t}$ satisfies
\begin{equation}\label{C0}
\widehat{C_{\theta,R_1}(E_{n+1})}
\subseteq\widehat{\Sigma_t}
\subseteq\widehat{C_{\theta,R_2}(E_{n+1})}.
\end{equation}
In particular, if $\theta\in(0,\pi/2]$ and
$R_{2}\cos\theta<1$, then there exist positive constants
$\epsilon_{0}$, $\epsilon_{1}$ and $\epsilon_{2}$ depending on
$\theta$, $R_{1}$ and $R_{2}$ such that
\begin{eqnarray}
&&1\geq V\geq\epsilon_{0}\label{V-yghs}\\
&&A_{\theta}\geq\epsilon_{1}\label{A-yghs}\\
&&\bar{g}(E_{n+1},\nu)
\geq\frac{\cos\theta}{1+R_{2}(1-\cos\theta)}\geq0\label{E-yghs}\\
&&\bar{g}(X,\nu)\geq\epsilon_{2}.\label{X-yghs}
\end{eqnarray}

\end{prop}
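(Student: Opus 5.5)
The plan is to prove the containment \eqref{C0} by an avoidance (comparison) principle for the flow \eqref{equationflow}, with the umbilical caps $C_{\theta,r}(E_{n+1})$ as barriers. Then \eqref{V-yghs}--\eqref{X-yghs} follow by applying Proposition \ref{bounded} to each $\Sigma_t$.

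\textbf{Step 1: the caps are static.} On each cap $C_{\theta,r}(E_{n+1})$ with $r\cos\theta<1$ we have $F=H_k/H_{k-1}=\kappa=\frac1r-\cos\theta$. By \eqref{dfec}, $A_\theta=\kappa\,\bar g(X,\nu)$, so the speed \eqref{f} satisfies
\[
f=\frac{A_\theta}{\kappa}-\bar g(X,\nu)=0 .
\]
The caps also meet $\mathcal H$ at angle $\theta$. Hence every such cap is a stationary solution of \eqref{equationflow}.

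\textbf{Step 2: comparison in the scalar formulation.} Write $\Sigma_t$ and the cap as radial graphs $u(\cdot,t)$ and $u_r$ over $\overline{\mathbb S^n_+(E_{n+1})}$, which is possible by the star-shapedness $\bar g(X,\nu)>0$. Both solve \eqref{scalarflow}, which on $[0,T^*)$ is strictly parabolic (since $F^{ij}>0$ and $A_\theta>0$) with the oblique condition $\nabla^{\mathbb S}_{\partial_\beta}u=\cos\theta v$.

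Suppose $u(\cdot,t)\le u_{R_2}$ and $\max(u-u_{R_2})$ first reaches $0$ at some time $t_0$ and point $z_0$. There are two cases.
\begin{itemize}
\item If $z_0$ is interior, then at $z_0$ we have $\nabla u=\nabla u_{R_2}$ and $\nabla^2u\le\nabla^2u_{R_2}$. This gives $F(\Sigma_{t_0})\ge\kappa$ at the touching point (curvature comparison through the formula for $h_i^j$), while $\rho$, $v$ and $\omega$ coincide there. Since $f$ is decreasing in $F$, this yields $\partial_t(u-u_{R_2})\le0$. To get a strict contradiction I will use the strong maximum principle for the linearized difference equation, or perturb $R_2$ to $R_2+\varepsilon$ and let $\varepsilon\to0$.
\item If $z_0\in\partial\mathbb S^n_+$, then $\partial_\beta(u-u_{R_2})\ge0$ there. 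The boundary condition written as $J(\nabla u)=0$ is strictly monotone in the $\partial_\beta$ component, because $\partial_{p_\beta}J=1-\cos^2\theta>0$ (with the tangential gradients equal). This forces $\partial_\beta(u-u_{R_2})=0$, contradicting Hopf's lemma for the difference, which satisfies a linear uniformly parabolic equation with bounded coefficients.
\end{itemize}
The inner barrier $u\ge u_{R_1}$ is handled in the same way. This is the main technical step, especially the boundary Hopf argument and checking the monotonicity of the speed in $u$ at equal first-order data. I would handle it by writing $u-u_r$ as a solution of a linear equation obtained by integrating the difference of the nonlinear operators along the segment between them. A smaller point is that the operator depends on $u$ itself through $\rho=e^u$ and $\omega$; this zeroth-order term is harmless, since at a first contact point those quantities agree.

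\textbf{Step 3: the estimates.} With \eqref{C0} in hand, each $\Sigma_t$ is convex, capillary, has $E_{n+1}$ in its wetting domain, and is pinched between $C_{\theta,R_1}$ and $C_{\theta,R_2}$ with $R_2\cos\theta<1$. Proposition \ref{bounded} then gives:
\begin{itemize}
\item item (ii) gives \eqref{A-yghs} with $\epsilon_1=(1-\cos\theta)(1-R_2\cos\theta)/R_3$;
\item item (iii) is exactly \eqref{E-yghs};
\item item (iv) gives \eqref{X-yghs} with $\epsilon_2=(1-\cos\theta)R_1/R_3$.
\end{itemize}
When $\theta=\pi/2$, item (iv) directly gives $\epsilon_2=R_1/R_3$. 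For a general angle, if $1-\cos\theta$ in (iv) is to be avoided, one can instead use $\langle E_{n+1},\nu_\delta\rangle\le1$ together with $\cos\theta<1$; this is still positive for $\theta>0$.

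Finally, \eqref{V-yghs} follows from item (i): $1\le x_{n+1}\le R_3$ gives $1\ge V=1/x_{n+1}\ge 1/R_3=\epsilon_0$.
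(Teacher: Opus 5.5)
Your proposal is correct and follows the paper's argument: the caps are static by \eqref{dfec}, an avoidance principle for the obliquely bounded parabolic problem gives \eqref{C0} (the paper only cites this principle, while you sketch it), and Proposition \ref{bounded} applied to each $\Sigma_t$ yields \eqref{V-yghs}--\eqref{X-yghs}. One small correction: a constant enlargement $R_2\to R_2+\varepsilon$ gives no strict inequality at first contact, because those caps are static too, so you should use either the strong maximum principle/Hopf route you also mention or the moving barrier $u_{R_2}+\varepsilon(1+t)$, which is the dilated cap $C_{\theta,R_2e^{\varepsilon(1+t)}}(E_{n+1})$ and hence a strict supersolution.
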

\begin{proof}
From \eqref{dfec}, we have
\begin{equation*}
  \frac{A_{\theta}}{F}=\bar{g}(X,\nu) \qquad\text{ on }~~~ ~~C_{\theta, r}(E_{n+1}).
\end{equation*}
Thus, for every $r>0$ satisfying $r\cos\theta<1$,
$C_{\theta,r}(E_{n+1})$ is a stationary solution of
\eqref{equationflow} in the strictly convex branch. 
The assertion \eqref{C0} follows from the avoidance principle for strictly parabolic equation with capillary boundary conditions (c.f. \cite[Section 2.6]{AWu}). Moreover, the estimates \eqref{V-yghs}-\eqref{X-yghs} are from Proposition \ref{bounded} directly.
\end{proof}

\subsection{Evolution equations along the flow}\ 

We now specialize the variation formulae to flow \eqref{equationflow}. For the subsequent curvature estimates, we write the evolution equations using the linear operator
\begin{equation*}
  \mathcal{L}\coloneqq \partial_{t}-\frac{A_{\theta}}{F^{2}}F^{ij} \nabla_{i j}^{2}-\bar{g}\left(X+\frac{\cos \theta}{F}x+\mathcal T, \nabla\right).
\end{equation*}
Denote $\mathcal{F}\coloneqq \sum_{i=1}^{n} F_{i}^{i}=\sum_{i=1}^{n} \frac{\partial F}{\partial h_{i}^{i}}$.
For $F=\frac{H_{k}}{H_{k-1}}$, we use Proposition \ref{formula2} to obtain
\begin{eqnarray}
\mathcal{F}-\frac{F^{i j} h_{i j}}{F}&=&\mathcal{F}-1 \geq 0, \label{F-110}\\
1\leq\frac{F^{i j} h_{i}^{k} h_{k j}}{F^{2}}&\leq&(n-k+1)\qquad\text{if}~~ 1\leq k\leq n.  \label{F-111}
\end{eqnarray}

\begin{prop} Along flow \eqref{equationflow}, 
\begin{equation}\label{V-eqa}
  \mathcal{L}V=-2A_{\theta}F^{-1}\bar{g}(\nu,E_{n+1})+\bar{g}(X,E_{n+1})-A_{\theta}VF^{-2}\mathcal{F}-\cos\theta F^{-1}\bar{g}(x,\nabla V)
\end{equation}
and
\begin{equation}\label{factor}
\nabla_{\mu}V=\csc\theta V-\cot\theta\bar{g}(\nu,E_{n+1})\qquad\text{ on }~~~\partial\Sigma_{t}.
\end{equation}
\end{prop}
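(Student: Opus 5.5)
The plan is a direct computation that uses only two ingredients. The first is the ambient gradient of $V$. The second is the Hessian identity \eqref{Va-1} from Proposition \ref{xaa2}, pulled back to $\Sigma_t$ through the Gauss formula. The whole argument rests on one preliminary observation. Since $V=1/x_{n+1}$ and $\bar g=x_{n+1}^{-2}\delta$, the hyperbolic gradient is $x_{n+1}^2$ times the Euclidean gradient $-E_{n+1}/x_{n+1}^2$. Hence
\begin{equation*}
\bar\nabla V=-E_{n+1},\qquad \bar g(\bar\nabla V,\nu)=-\bar g(E_{n+1},\nu),\qquad \nabla V=-E_{n+1}+\bar g(E_{n+1},\nu)\nu .
\end{equation*}

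\emph{Step 1: time derivative.} I differentiate $V$ along \eqref{equationflow}. This gives $\partial_tV=\bar g(\bar\nabla V,f\nu+\mathcal T)=-f\bar g(E_{n+1},\nu)+\bar g(\mathcal T,\nabla V)$. Inserting \eqref{f} for $f$ yields
\begin{equation*}
\partial_t V=-\frac{A_\theta}{F}\bar g(E_{n+1},\nu)+\bar g(X,\nu)\bar g(E_{n+1},\nu)+\bar g(\mathcal T,\nabla V).
\end{equation*}

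\emph{Step 2: second-order term.} By the Gauss formula with $h_{ij}=\bar g(\bar\nabla_{e_i}\nu,e_j)$, we have $\nabla^2_{ij}V=\bar\nabla^2V(e_i,e_j)-h_{ij}\bar g(\bar\nabla V,\nu)$. Using \eqref{Va-1}, this becomes $\nabla^2_{ij}V=Vg_{ij}+h_{ij}\bar g(E_{n+1},\nu)$. Contracting with $F^{ij}$ and using $F^{ij}h_{ij}=F$ (homogeneity of degree one) gives
\begin{equation*}
\frac{A_\theta}{F^2}F^{ij}\nabla^2_{ij}V=\frac{A_\theta V\mathcal F}{F^2}+\frac{A_\theta}{F}\bar g(E_{n+1},\nu).
\end{equation*}

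\emph{Step 3: gradient terms.} From the expression for $\nabla V$ above, $\bar g(X,\nabla V)=-\bar g(X,E_{n+1})+\bar g(X,\nu)\bar g(E_{n+1},\nu)$. I subtract the Step 2 term and $\bar g(X+\frac{\cos\theta}{F}x+\mathcal T,\nabla V)$ from the expression in Step 1. The $\mathcal T$-terms cancel, and so does $\bar g(X,\nu)\bar g(E_{n+1},\nu)$. What remains is exactly
\begin{equation*}
-2A_\theta F^{-1}\bar g(\nu,E_{n+1})+\bar g(X,E_{n+1})-A_\theta VF^{-2}\mathcal F-\cos\theta F^{-1}\bar g(x,\nabla V),
\end{equation*}
which is \eqref{V-eqa}. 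The only real care needed, and the main place an error could enter, is the sign convention linking $h$ to the Gauss formula. It is this convention that makes the two $\frac{A_\theta}{F}\bar g(E_{n+1},\nu)$ contributions add up to the factor $2$ rather than cancel.

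\emph{Step 4: boundary identity.} On $\mathcal H$ we have $x_{n+1}=1$, so $V=1$ and $\bar N=-E_{n+1}$. Hence $\nabla_\mu V=\bar g(-E_{n+1},\mu)=\bar g(\bar N,\mu)$. Solving \eqref{mu0}--\eqref{nu0} for $\bar N$ gives $\bar N=\sin\theta\mu-\cos\theta\nu$, so $\nabla_\mu V=\sin\theta$. On the other side, $\bar g(\nu,E_{n+1})=-\bar g(\nu,\bar N)=\cos\theta$ by \eqref{angle}. Therefore $\csc\theta V-\cot\theta\bar g(\nu,E_{n+1})=\frac{1-\cos^2\theta}{\sin\theta}=\sin\theta$, which proves \eqref{factor}.
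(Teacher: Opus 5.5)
Your proposal is correct and follows the same computation as the paper: you compute the time derivative using $\bar\nabla V=-E_{n+1}$, handle the Hessian term with the Gauss formula and $\bar\nabla^2V=V\bar g$, and let the tangential and $\bar g(X,\nu)\bar g(E_{n+1},\nu)$ terms cancel. The only difference is cosmetic and comes at the boundary: you evaluate both sides of \eqref{factor} to $\sin\theta$ using $V=1$ on $\mathcal H$ and the angle condition, whereas the paper substitutes $\mu=\cot\theta\,\nu-\csc\theta\,\bar E_{n+1}$ directly.
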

\begin{proof}Along the flow \eqref{equationflow},  
\begin{equation*}
\begin{gathered}
\partial_{t}V=-\bar{g}(\partial_{t}x,E_{n+1})=-\left(A_{\theta}F^{-1}-\bar{g}(X,\nu)\right)\bar{g}(E_{n+1},\nu)-\bar{g}(\mathcal T, E_{n+1}).\\
\end{gathered}
\end{equation*}
From Gauss formula and Proposition \ref{xaa2},  
\begin{equation*}%\label{gauss}
F^{ij}\nabla^{2}_{ij}V=F^{ij}\left(\bar{\nabla}^{2}_{ij}V-h_{ij}\bar{\nabla}_{\nu}V\right)
=VF^{ij}\bar{g}_{ij}+F^{ij}h_{ij}\bar{g}(\nu,E_{n+1}).
\end{equation*}
Together with $F^{ij}h_{ij}=F$ and
$F^{ij}\bar g_{ij}=\mathcal F$, we obtain
\begin{eqnarray*}
 \mathcal{L}V &=&\partial_{t}V-A_{\theta}F^{-2}F^{ij}\nabla^{2}_{ij}V-\bar{g}(X+\cos \theta F^{-1}x+\mathcal T,\nabla V)\\
&=&-2A_{\theta}F^{-1}\bar{g}(\nu,E_{n+1})+\bar{g}(X,E_{n+1})-A_{\theta}VF^{-2}F^{ij}\bar{g}_{ij}-\cos\theta F^{-1}\bar{g}(x,\nabla V)\nonumber\\
&&\quad-\bar{g}(\mathcal T,E_{n+1})-\bar{g}(\mathcal T,\nabla V)\nonumber\\
&=&-2A_{\theta}F^{-1}\bar{g}(\nu,E_{n+1})+\bar{g}(X,E_{n+1})-A_{\theta}VF^{-2}\mathcal{F}-\cos\theta F^{-1}\bar{g}(x,\nabla V).\nonumber
\end{eqnarray*}
Along $\partial\Sigma$, \eqref{mu0} and \eqref{nu0} imply $\bar{N}=\sin \th\mu-\cos \th \nu$, which follows   
\begin{equation*}
\nabla_{\mu}V=-\bar{g}(\mu,E_{n+1})=-\bar{g}(\cot\theta\nu-\csc\theta \bar{E}_{n+1},E_{n+1})=\csc\theta V-\cot\theta\bar{g}(\nu,E_{n+1}).
\end{equation*}
\end{proof}
For the upper bound of $F$, we shall combine its evolution equation with those of $V$ and of the following auxiliary function:
\begin{equation*}
\psi\coloneqq\frac12|X|^2=\frac12|x-E_{n+1}|^2.
\end{equation*}

\begin{prop} Along flow \eqref{equationflow}, %we have
\begin{eqnarray}
\mathcal{L}\psi&=&-|X|^{2}-F^{-2}[2x^{3}_{n+1}A_{\theta}F^{ij}\bar{g}(e_{i},E_{n+1})\bar{g}(e_{j},X)+x_{n+1}A_{\theta}\mathcal{F}] \notag\\
&&+F^{-1}[2A_{\theta}x^{2}_{n+1}\bar{g}(X,\nu)-\cos\theta x^{2}_{n+1}\bar{g}(X,x)+\cos\theta x^{2}_{n+1}\bar{g}(X,\nu)\bar{g}(x,\nu)]  , \label{Phsi}
\end{eqnarray}
and
\begin{equation}\label{boundarypsi}
  \nabla_{\mu} \psi=\cot\theta \bar{g}(X,\nu)\qquad \text{ on } ~~~\partial\Sigma_{t}.
\end{equation}
\end{prop}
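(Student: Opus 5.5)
The plan is a direct computation: express the time derivative, the hyperbolic Hessian and the gradient of $\psi$ through Euclidean quantities. The identity $\langle Y,Z\rangle=x_{n+1}^2\bar g(Y,Z)$ then lets us convert back to $\bar g$, and we collect terms in $\mathcal L\psi$. The main bookkeeping issue is the Hessian of $\psi$ with respect to $\bar g$; the rest is routine. Throughout, write $x=x_{n+1}$ when it appears as a scalar factor.

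\emph{Time derivative.} By \eqref{equationflow},
\[
\partial_t\psi=\langle X,\partial_t x\rangle=x_{n+1}^2\bar g(X,f\nu+\mathcal T),
\qquad f=A_\theta F^{-1}-\bar g(X,\nu),
\]
so
\[
\partial_t\psi=x_{n+1}^2\bigl(A_\theta F^{-1}\bar g(X,\nu)-\bar g(X,\nu)^2+\bar g(X,\mathcal T)\bigr).
\]

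\emph{Gradient and Hessian.} The Euclidean gradient of $\psi$ is $D\psi=X$. Hence $\bar\nabla\psi=x_{n+1}^2X$, and on $\Sigma_t$ the gradient $\nabla\psi$ is the $\bar g$-tangential part of $x_{n+1}^2X$. For the Hessian I use the displayed relation between $\bar\nabla$ and $D$, which gives
\[
\bar\nabla^2\psi(Y,Z)=D^2\psi(Y,Z)+Y(\ln x_{n+1})Z\psi+Z(\ln x_{n+1})Y\psi-\langle Y,Z\rangle\langle D\ln x_{n+1},D\psi\rangle .
\]
The four terms convert as follows:
\begin{itemize}
\item $D^2\psi=\delta=x_{n+1}^2\bar g$;
\item $Y(\ln x_{n+1})=x_{n+1}\bar g(Y,E_{n+1})$;
\item $Z\psi=x_{n+1}^2\bar g(X,Z)$;
\item $\langle D\ln x_{n+1},X\rangle=(x_{n+1}-1)/x_{n+1}$.
\end{itemize}
Together these give
\[
\bar\nabla^2\psi=x_{n+1}\bar g+x_{n+1}^3\bigl[\bar g(\cdot,E_{n+1})\bar g(X,\cdot)+\bar g(X,\cdot)\bar g(\cdot,E_{n+1})\bigr].
\]
By the Gauss formula, $\nabla^2_{ij}\psi=\bar\nabla^2\psi(e_i,e_j)-h_{ij}x_{n+1}^2\bar g(X,\nu)$. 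Contracting with $A_\theta F^{-2}F^{ij}$, and using $F^{ij}h_{ij}=F$ and $F^{ij}\bar g_{ij}=\mathcal F$, produces three terms:
\[
x_{n+1}A_\theta F^{-2}\mathcal F,\qquad
2x_{n+1}^3A_\theta F^{-2}F^{ij}\bar g(e_i,E_{n+1})\bar g(e_j,X),\qquad
-A_\theta F^{-1}x_{n+1}^2\bar g(X,\nu).
\]

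\emph{Assembling $\mathcal L\psi$.} The transport terms of $\mathcal L$ give
\[
\bar g(X,\nabla\psi)=|X|^2-x_{n+1}^2\bar g(X,\nu)^2,
\]
\[
\cos\theta F^{-1}\bar g(x,\nabla\psi)=\cos\theta F^{-1}x_{n+1}^2\bigl(\bar g(x,X)-\bar g(x,\nu)\bar g(X,\nu)\bigr).
\]
The remaining transport term is $\bar g(\mathcal T,\nabla\psi)=x_{n+1}^2\bar g(X,\mathcal T)$, which cancels the tangential part of $\partial_t\psi$. The $\bar g(X,\nu)^2$ terms also cancel. The two contributions $A_\theta F^{-1}x_{n+1}^2\bar g(X,\nu)$, one from $\partial_t\psi$ and one from minus the Hessian, add to the coefficient $2$. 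Collecting everything yields \eqref{Phsi}. I expect the only delicate point to be tracking signs in the Hessian conversion. In particular, the $\mathcal F$-coefficient should come out as $x_{n+1}^2-x_{n+1}(x_{n+1}-1)=x_{n+1}$, which serves as a check on the computation.

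\emph{Boundary condition.} On $\partial\Sigma_t\subset\mathcal H$ we have $x_{n+1}=1$, so $\nabla_\mu\psi=\langle X,\mu\rangle=\bar g(X,\mu)$. From \eqref{mu0}--\eqref{nu0}, $\mu=(\bar N+\cos\theta\,\nu)/\sin\theta$. By \eqref{XXaeq2}, $\bar g(X,\bar N)=0$ on $\mathcal H$. Hence $\nabla_\mu\psi=\cot\theta\,\bar g(X,\nu)$, which is \eqref{boundarypsi}.
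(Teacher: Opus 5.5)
Your computation is correct and matches the paper's proof. It uses the same expressions for $\partial_t\psi$, for $\nabla\psi=x_{n+1}^2\bigl(X-\bar g(X,\nu)\nu\bigr)$ and for $F^{ij}\nabla_{ij}\psi$, and it derives the boundary identity from $\mu=(\bar N+\cos\theta\,\nu)/\sin\theta$ together with \eqref{XXaeq2}. The only difference is how you obtain the $x_{n+1}\mathcal F$ term: you compute the ambient Hessian through the Christoffel relation and then apply the Gauss formula, whereas the paper differentiates $x_{n+1}^2\bar g(e_i,X)$ along $\Sigma$ and uses the conformal Killing identity \eqref{XXaeq1}, noting $x_{n+1}^2V=x_{n+1}$.
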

\begin{proof}
From \eqref{equationflow} and \eqref{f},  
\begin{equation*}
  \partial_{t}\psi=\langle\partial_{t}X,X\rangle=\langle\partial_{t}x,X\rangle=f\langle X,\nu\rangle+\langle X,\mathcal T\rangle=(A_{\theta}F^{-1}-\bar{g}(X,\nu))\langle X,\nu\rangle+\langle X,\mathcal T\rangle.
\end{equation*}
Choosing $\{e_{i}\}_{i=1}^{n}$ be an orthonormal frame of $\Sigma$ such that $\nabla_{e_{i}}e_{j}=0$ at $p\in\Sigma$, thus
\begin{eqnarray*}
F^{ij}\nabla_{ij}\psi&=&F^{ij}\left(\frac12|X|^{2} \right)_{;ij}
=F^{ij}[x_{n+1}^{2}\bar{g}(e_i,X)]_{;j}\\
&=&2x^{3}_{n+1}F^{ij}\bar{g}(e_{i},E_{n+1})\bar{g}(e_{j},X)+x^{2}_{n+1}F^{ij}(\bar{g}(\bar{\nabla}_{e_{j}}e_{i},X)+\bar{g}(\bar{\nabla}_{e_{j}}X,e_{i}))\nonumber\\
&=&2x^{3}_{n+1}F^{ij}\bar{g}(e_{i},E_{n+1})\bar{g}(e_{j},X)-x^{2}_{n+1}\bar{g}(X,\nu)F^{ij}h_{ij}+x_{n+1}F^{ij}\bar{g}_{ij}\nonumber\\
&=&2x^{3}_{n+1}F^{ij}\bar{g}(e_{i},E_{n+1})\bar{g}(e_{j},X)-x^{2}_{n+1}\bar{g}(X,\nu)F+x_{n+1}\mathcal{F}.\nonumber
\end{eqnarray*}
By calculating, we see
\begin{equation*}
  \nabla\psi=\langle X,e_{i}\rangle e_{i}=x^{2}_{n+1}(X-\bar{g}(X,\nu)\nu).
\end{equation*}
It follows that
\begin{eqnarray*}
\mathcal{L}\psi&=&\partial_{t}\psi-A_{\theta}F^{-2}F^{ij}\nabla_{ij}\psi-\bar{g}(X+\cos\theta F^{-1}x+\mathcal T,\nabla\psi)\\
&=&-|X|^{2}-F^{-2}[2x^{3}_{n+1}A_{\theta}F^{ij}\bar{g}(e_{i},E_{n+1})\bar{g}(e_{j},X)+x_{n+1}A_{\theta}\mathcal{F}]\nonumber\\
&&+F^{-1}[2A_{\theta}x^{2}_{n+1}\bar{g}(X,\nu)-\cos\theta x^{2}_{n+1}\bar{g}(X,x)+\cos\theta x^{2}_{n+1}\bar{g}(X,\nu)\bar{g}(x,\nu)].\nonumber
\end{eqnarray*}
Along $\partial\Sigma_{t}\subseteq \mathcal{H}$, we use \eqref{mu0} and \eqref{XXaeq2} to get
\begin{equation*}
\nabla_{\mu}\psi=\langle D_{\mu}X,X\rangle=\langle\mu,X\rangle=\csc\theta\langle \bar{N}+\cos\theta\nu,X\rangle=\cot\theta \bar{g}(X,\nu).
\end{equation*}
\end{proof}

\begin{prop}\label{Festimate} Along flow \eqref{equationflow}, 
\begin{equation}\label{F88}
\mathcal{L}F= 2{F^{-2}}F^{i j}A_{\theta; i} F_{;j}-2F^{-3}A_{\theta}F^{i j}F_{;i}F_{;j}+(\mathcal{F}-1)\bar{g}(E_{n+1}, \nu)+VF^{-1}(F^{2}-F^{ij}(h^{2})_{ij})
\end{equation}
and
\begin{equation}\label{boundaryF}
  \nabla_{\mu} F=0 \qquad\text{ on }~~~ \partial\Sigma_{t}.
\end{equation}
\end{prop}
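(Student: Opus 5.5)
I would obtain \eqref{F88} by differentiating the speed $f=A_\theta F^{-1}-\bar g(X,\nu)$ and inserting it into Proposition \ref{formula111} (7). The first step is to compute $\nabla^2 f$. Writing $\nabla^2(A_\theta F^{-1})$ gives
\begin{equation*}
F^{-1}\nabla^2A_\theta-F^{-2}(\nabla A_\theta\otimes\nabla F+\nabla F\otimes\nabla A_\theta)-A_\theta F^{-2}\nabla^2F+2A_\theta F^{-3}\nabla F\otimes\nabla F .
\end{equation*}
Contracting with $-F^{ij}$, the term $A_\theta F^{-2}F^{ij}F_{;ij}$ becomes the diffusion part of $\mathcal L$. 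The mixed and quadratic terms give exactly $2F^{-2}F^{ij}A_{\theta;i}F_{;j}-2F^{-3}A_\theta F^{ij}F_{;i}F_{;j}$.

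The second step handles the Hessians of $A_\theta=V-\cos\theta\,\bar g(x,\nu)$ and of $\bar g(X,\nu)$, using the Gauss--Weingarten formulas and Codazzi. For $V$ I use Proposition \ref{xaa2}: $F^{ij}V_{;ij}=V\mathcal F+F\bar g(\nu,E_{n+1})$. Since $x$ is Killing (Proposition \ref{lem2.1}), $\bar g(x,\nu)$ satisfies the standard identity
\begin{equation*}
\nabla^2_{ij}\bar g(x,\nu)=\bar g(x,\nabla h_{ij})-(h^2)_{ij}\bar g(x,\nu)+\bar g(x,\nu)\bar g_{ij}\text{-type curvature term}.
\end{equation*}
Here the Killing property kills the first-order terms, and the ambient curvature $-1$ term cancels when contracted with $F^{ij}$ against $F^{ij}h_{ij}=F$. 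Because $X$ is conformal Killing with factor $V$, the analogous computation gives
\begin{equation*}
\nabla^2_{ij}\bar g(X,\nu)=\bar g(X,\nabla h_{ij})+V h_{ij}-(h^2)_{ij}\bar g(X,\nu)+\text{terms from }\nabla V .
\end{equation*}
Contracting with $F^{ij}$, the gradient terms $\bar g(X,\nabla F)$ and $\cos\theta F^{-1}\bar g(x,\nabla F)$ are exactly the drift in $\mathcal L$. The tangential $\mathcal T$ term is absorbed by the $\nabla_{\mathcal T}F$ in Proposition \ref{formula111} (7).

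The third step collects the zero-order terms: $-f(F^{ij}(h^2)_{ij}-\mathcal F)$ from (7), together with the contracted Hessian pieces. The $\bar g(X,\nu)F^{ij}(h^2)_{ij}$ contributions cancel. The $\cos\theta\,\bar g(x,\nu)$ parts recombine with the $V$ parts into $A_\theta$, and then cancel against $-A_\theta F^{-1}(F^{ij}(h^2)_{ij}-\mathcal F)$ except for the pieces $VF^{-1}(F^2-F^{ij}(h^2)_{ij})$ and $(\mathcal F-1)\bar g(E_{n+1},\nu)$. The latter comes from $-F^{-1}F\bar g(\nu,E_{n+1})$ together with the $\bar g(E_{n+1},\nu)\mathcal F$ produced by $\nabla V=-x_{n+1}^{-1}\cdot$ (the tangential part of $E_{n+1}$) in the $X$-computation. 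This bookkeeping is where I expect the main obstacle: the sign conventions for the $\nabla V$ terms and the exact cancellation of $\bar g(x,\nu)\mathcal F$ against $\cos\theta$ terms. I would check it on the umbilical caps, where $f\equiv0$ by \eqref{dfec}, as a consistency test.

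For \eqref{boundaryF}, I would use Proposition \ref{formula11}. Since $\mu$ is a principal direction, $\nabla_\mu F=F^{\mu\mu}h_{\mu\mu;\mu}+F^{\alpha\beta}h_{\alpha\beta;\mu}$. The approach is to differentiate the boundary relation Proposition \ref{boundary-equ} for the speed: both $f$ and the geometric quantities obey capillary Neumann relations. Specifically, \eqref{factor} gives $\nabla_\mu V$, and $\nabla_\mu\bar g(X,\nu)=\bar g(X,\nu)(\csc\theta+\cot\theta h_{\mu\mu})$ follows from $h(\mu,e_\alpha)=0$ and \eqref{XXaeq2}. Similarly, $\nabla_\mu \bar g(x,\nu)$ is computed via the Killing field $x$, which is tangent to $\mathcal H$ with $\bar\nabla_{\bar N}$ computable. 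Substituting into $\nabla_\mu f=(\csc\theta+\cot\theta h_{\mu\mu})f$, all terms not involving $\nabla_\mu F$ should cancel, leaving $A_\theta F^{-2}\nabla_\mu F=0$. Since $A_\theta>0$, this gives $\nabla_\mu F=0$.
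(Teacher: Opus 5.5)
Your plan matches the paper's proof. You expand $-F^{ij}f_{;ij}$ in Proposition \ref{formula111}(7), use the contracted Hessians of $V$, of $\bar g(x,\nu)$ ($x$ Killing) and of $\bar g(X,\nu)$ ($X$ conformal Killing) to assemble \eqref{F88}, and obtain \eqref{boundaryF} by comparing $\nabla_\mu f=(\csc\theta+\cot\theta h_{\mu\mu})f$ with the same Neumann relations for $A_\theta$ and $\bar g(X,\nu)$; the paper cites these from \cite{GWX}, and your direct verification of them works.

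One bookkeeping correction to your second step. The curvature term in $\bar g(x,\nu)_{;ij}$ does not cancel; it contributes $\bar g(x,\nu)\mathcal F$, which is exactly what combines with $V\mathcal F$ into $A_\theta\mathcal F$. Also, the $\bar g(E_{n+1},\nu)\mathcal F$ in the $X$-Hessian comes from the normal derivative $\bar\nabla_\nu V=-\bar g(E_{n+1},\nu)$, not from the tangential gradient of $V$; the paper writes this as $(\bar g(X,\nu)-\bar\nabla_\nu V)\mathcal F=\bar g(x,\nu)\mathcal F$.
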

\begin{proof} From Proposition \ref{formula111}, we get
\begin{eqnarray}\label{F89}
\partial_{t}F&=&F^{j}_{i}\partial_{t}h^{i}_{j}\nonumber\\
&=&-F^{ij}f_{ij}-fF^{ij}(h^{2})_{ij}+fF^{ij}\bar{g}_{ij}+\nabla_{\mathcal T}(F^{ij}h_{ij})\nonumber\\
&=&-F^{ij}\left(\frac{A_{\theta}}{F}-\bar{g}(X,\nu)\right)_{;ij}-fF^{ij}(h^{2})_{ij}+fF^{ij}\bar{g}_{ij}+\nabla_{\mathcal T}F.
\end{eqnarray}
Next we compute the first term on the right-hand side above.
\begin{eqnarray*}
-F^{ij}\left(\frac{A_{\theta}}{F}-\bar{g}(X,\nu)\right)_{;ij}&=&-F^{ij}\left(\frac{A_{\theta}}{F}\right)_{;ij}+F^{ij}\bar{g}(X,\nu)_{;ij}\nonumber\\
&=&A_{\theta}F^{-2}F^{ij}F_{;ij}-2A_{\theta}F^{-3}F^{ij}F_{;i}F_{;j}\nonumber\\
&&+2F^{-2}F^{ij}F_{;i}A_{\theta;j}-F^{-1}F^{ij}A_{\theta;ij}+F^{ij}\bar{g}(X,\nu)_{;ij}\nonumber
\end{eqnarray*}
By \eqref{Va-1}, we calculate that
\begin{eqnarray}\label{F113}
A_{\theta;ij}&=&\nabla^{2}_{ij}V-\cos\theta \bar{g}(x,\nu)_{;ij}\nonumber\\
&=&\bar{\nabla}^{2}_{ij}V-h_{ij}\bar{\nabla}_{\nu}V-\cos\theta \bar{g}(x,\nu)_{;ij}\nonumber\\
&=&V\bar{g}_{ij}+h_{ij}\bar{g}(E_{n+1},\nu)-\cos\theta \bar{g}(x,\nu)_{;ij}
\end{eqnarray}
Since $x$ is a Killing vector field in $\mathbb{H}^{n+1}$, by \eqref{xKilling},
\begin{eqnarray*}
 e_{i}(\bar{g}(x,\nu))=\bar{g}(x,\bar{\nabla}_{e_{i}}\nu)+\bar{g}(\bar{\nabla}_{e_{i}}x,\nu)=\bar{g}(x,\bar{\nabla}_{e_{i}}\nu)-\bar{g}(\bar{\nabla}_{\nu}x, e_{i}).
\end{eqnarray*}
Calculating further, we get
\begin{eqnarray}\label{F119}
\bar{g}(x,\nu)_{;ij}&=&e_{j}e_{i}\bar{g}(x,\nu)\nonumber\\
&=&\bar{g}(x,\bar{\nabla}_{e_{j}}\bar{\nabla}_{e_{i}}\nu)+\bar{g}(\bar{\nabla}_{e_{j}}x,\bar{\nabla}_{e_{i}}\nu)-\bar{g}(\bar{\nabla}_{\nu}x,\bar{\nabla}_{e_{j}}e_{i})
-\bar{g}(\bar{\nabla}_{e_{j}}\bar{\nabla}_{\nu}x,e_{i})\nonumber\\
&=&-\bar{g}(\bar{\nabla}_{\nu}\bar{\nabla}_{e_{j}}x,e_{i})+\bar{g}(\bar{\nabla}_{[\nu,e_{j}]}x,e_{i})+\bar{g}(\bar{R}(\nu,e_{j})x,e_{i})+h_{ij}\bar{g}(\bar{\nabla}_{\nu}x,\nu)\nonumber\\
&&+\bar{g}(\bar{\nabla}_{e_{j}}x,e_{k})h_{ik}+\bar{g}(x,h_{ij;k}e_{k})-h_{ik}h^{k}_{j}\bar{g}(x,\nu)\nonumber\\
&=&-\bar{\nabla}_{\nu}(\bar{g}(\bar{\nabla}_{e_{j}}x,e_{i}))+\bar{g}(\bar{\nabla}_{e_{j}}x,\bar{\nabla}_{\nu}e_{i})+\bar{g}(\bar{\nabla}_{[\nu,e_{j}]}x,e_{i})+\bar{g}(x,\nu)\bar{g}_{ij}\nonumber\\
&&+h_{ij}\bar{g}(\bar{\nabla}_{\nu}x,\nu)+\bar{g}(\bar{\nabla}_{e_{j}}x,e_{k})h_{ik}+\bar{g}(x,h_{ij;k}e_{k})-(h^{2})_{ij}\bar{g}(x,\nu)\nonumber\\
&=&-\bar{\nabla}_{\nu}(\bar{g}(\bar{\nabla}_{e_{j}}x,e_{i}))+\bar{g}(\bar{\nabla}_{e_{j}}x,[\nu,e_{i}])+\bar{g}(\bar{\nabla}_{[\nu,e_{j}]}x,e_{i})+\bar{g}(x,\nu)\bar{g}_{ij}\nonumber\\
&&+h_{ij}\bar{g}(\bar{\nabla}_{\nu}x,\nu)+2\bar{g}(\bar{\nabla}_{e_{j}}x,e_{k})h_{ik}+\bar{g}(x,h_{ij;k}e_{k})-(h^{2})_{ij}\bar{g}(x,\nu),
\end{eqnarray}
which yields that
\begin{eqnarray*}
F^{ij}A_{\theta;ij}&=&VF^{ij}\bar{g}_{ij}+F^{ij}h_{ij}\bar{g}(E_{n+1},\nu)-\cos\theta F^{ij}\bar{g}(x,\nu)_{;ij}\\&=&A_{\theta}\mathcal F+F\bar g(E_{n+1},\nu)-\cos\theta\bar g(x,\nabla F)+\cos\theta F^{ij}h_{ik}h^k_j\bar g(x,\nu).\end{eqnarray*}Here we used $F^{ij}h_{ij}=F$ and the symmetry of $F^{ij}h_{ik}$,
which makes its contraction with the skew-symmetric tensor
$\bar g(\bar\nabla_{e_j}x,e_k)$ vanish.
 
Similar to \eqref{F119}, we can also compute to the Hessian of $\bar{g}(X,\nu)$ as
\begin{eqnarray}\label{F119-X}
\bar{g}(X,\nu)_{;ij}&=&-\bar{\nabla}_{\nu}(\bar{g}(\bar{\nabla}_{e_{j}}X,e_{i}))+\bar{g}(\bar{\nabla}_{e_{j}}X,[\nu,e_{i}])+\bar{g}(\bar{\nabla}_{[\nu,e_{j}]}X,e_{i})+\bar{g}(X,\nu)\bar{g}_{ij}\nonumber\\
&&+h_{ij}\bar{g}(\bar{\nabla}_{\nu}X,\nu)+2\bar{g}(\bar{\nabla}_{e_{j}}X,e_{k})h_{ik}+\bar{g}(X,h_{ij;k}e_{k})-(h^{2})_{ij}\bar{g}(X,\nu).
\end{eqnarray}
Contracting \eqref{F119-X} and using \eqref{XXaeq1}, together with
$F^{ij}h_{ij}=F$ and $F^{ij}\bar g_{ij}=\mathcal F$, we obtain
\begin{eqnarray*}
F^{ij}\bar{g}(X,\nu)_{;ij}&=&(\bar{g}(X,\nu)-\bar{\nabla}_{\nu}V)F^{ij}\bar{g}_{ij}+VF^{ij}h_{ij}+F^{ij}\bar{g}(X,h_{ij;k}e_{k})\\
&&-F^{ij}(h^{2})_{ij}\bar{g}(X,\nu)\nonumber\\
&=&\bar{g}(x,\nu)\mathcal{F}+VF+\bar{g}(X,\nabla F)-F^{ij}(h^{2})_{ij}\bar{g}(X,\nu),\nonumber
\end{eqnarray*}
where we use the identity 
\begin{equation*}
 \bar{g}(e_{j},[\nu,e_{i}])+\bar{g}(e_{i},[\nu,e_{j}])+\bar{g}(\nu,[e_{i},e_{j}])=-2\bar{g}(\bar{\nabla}_{e_{i}}\nu, e_{j})=-2h_{ij}.
\end{equation*}
Moreover,  
\begin{equation*}
 -f F^{ij}(h^{2})_{ij}=-\left(A_{\theta}F^{-1}-\bar{g}(X, \nu)\right)F^{ij}(h^{2})_{ij}.
\end{equation*}
Putting those terms into equation \eqref{F89} and simplifications, we conclude that
\begin{eqnarray*}
\partial_{t} F&=&A_{\theta}F^{-2} F^{ij} F_{;i j}+F_{; k}\bar{g}(X, e_{k})+2 {F^{-2}}F^{i j} F_{;i}A_{\theta;j}-2A_{\theta}{F^{-3}} F^{i j} F_{;i} F_{; j}+\nabla_{\mathcal T} F \\
&&+\cos \theta F^{-1} F_{;k}\bar{g}(x,e_{k})+\left(\mathcal{F}-1\right)\bar{g}(E_{n+1},\nu)
+VF^{-1}(F^{2}-F^{ij}(h^{2})_{ij}),\nonumber
\end{eqnarray*}
Hence, we have
\begin{eqnarray*}
 \mathcal{L}F&=&\partial_{t}F-A_{\theta}F^{-2}F^{ij}\nabla^{2}_{ij}F-\bar{g}(X+\cos\theta{F^{-1}}x+\mathcal T,\nabla F)\\
&=& 2{F^{-2}}F^{i j}A_{\theta; i} F_{;j}-2A_{\theta}F^{-3}F^{i j}F_{;i}F_{;j}+(\mathcal{F}-1)\bar{g}(E_{n+1}, \nu)\nonumber\\
&&+VF^{-1}(F^{2}-F^{ij}(h^{2})_{ij}).\nonumber
\end{eqnarray*}

Next we show \eqref{boundaryF}. From Proposition \ref{boundary-equ},  
\begin{equation*}
  \nabla_{\mu} f=\left(\csc\theta+\cot \theta h_{\mu \mu}\right) f .
\end{equation*}
From \cite[Proposition 3.6]{GWX}, along $\partial \Sigma_{t}$,
\begin{equation*}
  \nabla_{\mu}\bar{g}(X, \nu)=\left(\csc\theta+\cot \theta h_{\mu \mu}\right)\bar{g}(X, \nu),
\end{equation*}
and
\begin{equation*}
  \nabla_{\mu}A_{\theta}=\left(\csc\theta+\cot \theta h_{\mu \mu}\right)A_{\theta}.
\end{equation*}
Together with \eqref{f} and $A_{\theta}>0$ by \eqref{A-yghs}, then the assertion\eqref{boundaryF} follows.
\end{proof}

\begin{prop} Along flow \eqref{equationflow}, 
\begin{eqnarray}\label{H1}
\mathcal{L}H&=&(n-F^{-1}H)\bar{g}(E_{n+1},\nu)
+A_{\theta}F^{-2}\bar{g}^{ij}F^{kl;rs}h_{kl;i}h_{rs;j}
+VH+A_{\theta}F^{-2}H F^{ij}(h^{2})_{ij}\nonumber\\
&&-\left(A_{\theta}+V\right)|h|^{2}F^{-1}
-2A_{\theta}F^{-3}|\nabla F|^{2}
+A_{\theta}F^{-2}H\mathcal{F}-nA_{\theta}F^{-1}\nonumber\\
&&+2F^{-2}\bar{g}(\nabla A_{\theta},\nabla F),
\end{eqnarray}
and when $\theta\in(0,\pi/2]$,
\begin{equation}\label{boudH}
  \nabla_{\mu} H \leq 0 \quad\text{ on } ~~~\partial \Sigma_{t}.
\end{equation}
\end{prop}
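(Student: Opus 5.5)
The identity \eqref{H1} comes from Proposition \ref{formula111}(6), $\partial_t H=-\Delta f+(n-|h|^2)f+\nabla_{\mathcal T}H$, with $f=A_\theta F^{-1}-\bar g(X,\nu)$. I will expand $-\Delta f$ term by term, in the same way as the proof of Proposition \ref{Festimate}, only contracting with $\bar g^{ij}$ instead of $F^{ij}$.

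\emph{The term $-\Delta(A_\theta F^{-1})$.} Write
\[
-\Delta(A_\theta F^{-1})=-F^{-1}\Delta A_\theta+2F^{-2}\bar g(\nabla A_\theta,\nabla F)+A_\theta F^{-2}\Delta F-2A_\theta F^{-3}|\nabla F|^2 .
\]
The trace of \eqref{F113} gives $\Delta A_\theta=nV+H\bar g(E_{n+1},\nu)-\cos\theta\,\Delta\bar g(x,\nu)$. The trace of \eqref{F119}, with the skew term dropped, gives $\Delta\bar g(x,\nu)=n\bar g(x,\nu)+\bar g(x,\nabla H)-|h|^2\bar g(x,\nu)$. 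For $\Delta F$ I use
\[
\Delta F=F^{kl}\Delta h_{kl}+\bar g^{ij}F^{kl;rs}h_{kl;i}h_{rs;j},
\]
and then the Simons identity in $\mathbb H^{n+1}$:
\[
\Delta h_{kl}=\nabla^2_{kl}H+Hh_k^mh_{ml}-|h|^2h_{kl}+nh_{kl}-Hg_{kl}.
\]
Contracting with $F^{kl}$ yields
\[
F^{kl}\nabla^2_{kl}H+HF^{ij}(h^2)_{ij}-|h|^2F+nF-H\mathcal F .
\]
The piece $A_\theta F^{-2}F^{kl}\nabla^2_{kl}H$ is absorbed by the operator $\mathcal L$. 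This already produces $A_\theta F^{-2}HF^{ij}(h^2)_{ij}$, the term $-A_\theta|h|^2F^{-1}$, the combination $nA_\theta F^{-1}$ and $A_\theta F^{-2}H\mathcal F$. I will track the signs carefully against the stated formula, since the target has $+A_\theta F^{-2}H\mathcal F-nA_\theta F^{-1}$. The pure-$\Delta$ pieces are combined with $(n-|h|^2)f$ and with the $-n V$ coming from $\Delta A_\theta$. In particular $A_\theta=V-\cos\theta\,\bar g(x,\nu)$, so the $\bar g(x,\nu)$ terms recombine into $A_\theta$.

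\emph{The term $\Delta\bar g(X,\nu)$.} The trace of \eqref{F119-X}, using \eqref{XXaeq1}, gives
\[
\Delta\bar g(X,\nu)=n\bar g(X,\nu)-n\bar\nabla_\nu V+VH+\bar g(X,\nabla H)-|h|^2\bar g(X,\nu).
\]
Here $\bar\nabla_\nu V=-\bar g(E_{n+1},\nu)$. The terms $\pm|h|^2\bar g(X,\nu)$ and $\pm n\bar g(X,\nu)$ then cancel against $(n-|h|^2)f$. The gradient terms $\bar g(X+\cos\theta F^{-1}x+\mathcal T,\nabla H)$ are exactly those absorbed into $\mathcal L$. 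What is left is $n\bar g(E_{n+1},\nu)-F^{-1}H\bar g(E_{n+1},\nu)$, $VH$, and $-V|h|^2F^{-1}$, the last arising from $-F^{-1}\cdot(-\cos\theta)(-|h|^2\bar g(x,\nu))$ combined with $-|h|^2A_\theta F^{-1}$. This part is routine bookkeeping.

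\emph{The boundary inequality \eqref{boudH}.} I will use a frame $\{e_\alpha,\mu\}$ with $\mu$ principal (Proposition \ref{formula11}(1)) and write $\nabla_\mu H=h_{\mu\mu;\mu}+\sum_\alpha h_{\alpha\alpha;\mu}$. Proposition \ref{formula11}(4) gives
\[
\sum_\alpha h_{\alpha\alpha;\mu}=\sum_\alpha\tilde h_{\alpha\alpha}(h_{\mu\mu}-h_{\alpha\alpha})
\]
in a frame diagonalizing $h_{\alpha\beta}$. To get $h_{\mu\mu;\mu}$ I use \eqref{boundaryF}:
\[
0=\nabla_\mu F=F^{\mu\mu}h_{\mu\mu;\mu}+\sum_\alpha F^{\alpha\alpha}h_{\alpha\alpha;\mu},
\]
so $h_{\mu\mu;\mu}=-\sum_\alpha F^{\alpha\alpha}h_{\alpha\alpha;\mu}/F^{\mu\mu}$. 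Substituting,
\[
\nabla_\mu H=\sum_\alpha\Bigl(1-\frac{F^{\alpha\alpha}}{F^{\mu\mu}}\Bigr)\tilde h_{\alpha\alpha}(h_{\mu\mu}-h_{\alpha\alpha}).
\]
For $\theta\in(0,\pi/2]$, Proposition \ref{formula11}(3) gives $\tilde h_{\alpha\alpha}=\csc\theta+\cot\theta\,h_{\alpha\alpha}>0$ by convexity. For $F=H_k/H_{k-1}$, which is concave and symmetric, the quantity $(F^{\alpha\alpha}-F^{\mu\mu})(h_{\alpha\alpha}-h_{\mu\mu})\le0$. So each summand equals
\[
\frac{\tilde h_{\alpha\alpha}}{F^{\mu\mu}}(F^{\alpha\alpha}-F^{\mu\mu})(h_{\alpha\alpha}-h_{\mu\mu})\le0 .
\]
This monotonicity of $\dot F^i$ opposite to $\kappa_i$ is the key structural point. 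I expect it to be the main obstacle, and I will justify it from the inequality $(\dot F^i-\dot F^j)(\kappa_i-\kappa_j)\le0$, valid for concave symmetric $F$ (Proposition \ref{garding-cone}).
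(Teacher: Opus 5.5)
Your overall route is the paper's. You expand $-\Delta f$ with $f=A_\theta F^{-1}-\bar g(X,\nu)$ using the traces of \eqref{F113}, \eqref{F119}, \eqref{F119-X}, compute $\Delta F$ by commuting derivatives of $h$, and absorb the $\nabla^2H$ and $\nabla H$ terms into $\mathcal L$. Grouping everything as $\Delta(A_\theta F^{-1})$ rather than splitting into $VF^{-1}$ and $\bar g(x,\nu)F^{-1}$ is only cosmetic. Your boundary argument for \eqref{boudH} is identical to the paper's and is fine: you solve $\nabla_\mu F=0$ for $h_{\mu\mu;\mu}$, then use Proposition \ref{formula11}(4), $(\dot F^i-\dot F^j)(\kappa_i-\kappa_j)\le0$ and $\tilde h_{\alpha\alpha}\ge0$.

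There is, however, a concrete error in the interior computation. The ``Simons identity in $\mathbb H^{n+1}$'' you quote is the one for the unit sphere. For ambient sectional curvature $c$ the curvature terms are $c\,(nh_{kl}-Hg_{kl})$, so in $\mathbb H^{n+1}$
\[
\Delta h_{kl}=\nabla^2_{kl}H+H(h^2)_{kl}-|h|^2h_{kl}-nh_{kl}+Hg_{kl}.
\]
This is exactly what tracing \eqref{Fij2} over $i,j$ gives. As a quick check, take a tube of radius $r$ about a totally geodesic $\mathbb H^q$. It has parallel second fundamental form with principal curvatures $\coth r$ and $\tanh r$, and only this sign makes the right-hand side vanish. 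Hence
\[
F^{kl}\Delta h_{kl}=F^{kl}\nabla^2_{kl}H+HF^{ij}(h^2)_{ij}-|h|^2F+H\mathcal F-nF ,
\]
and $A_\theta F^{-2}\Delta F$ contributes $+A_\theta F^{-2}H\mathcal F-nA_\theta F^{-1}$.

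This is precisely the sign discrepancy you noticed, and it cannot be absorbed by tracking signs elsewhere. The only other $nA_\theta F^{-1}$ terms cancel each other: $-nA_\theta F^{-1}$ comes from $-F^{-1}\Delta A_\theta$ and $+nA_\theta F^{-1}$ from $(n-|h|^2)f$. Moreover, $\mathcal F$ enters only through the contraction above. So with your identity the computation ends with $-A_\theta F^{-2}H\mathcal F+nA_\theta F^{-1}$, contradicting \eqref{H1}.

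Once the sign is corrected, the rest of your bookkeeping goes through and yields \eqref{H1} exactly. This includes the terms $(n-F^{-1}H)\bar g(E_{n+1},\nu)$, $VH$ and $-(A_\theta+V)|h|^2F^{-1}$.
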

\begin{proof} By Proposition \ref{formula111}, 
\begin{eqnarray}\label{H00}
\partial_{t} H&=&-\Delta f+(n-|h|^{2}) f+\nabla_{\mathcal T} H \nonumber\\
&=&-\Delta\left(A_{\theta}{F^{-1}}-\bar{g}(X,\nu)\right)+(n-|h|^{2})\left(A_{\theta}{F^{-1}}-\bar{g}(X,\nu)\right)+\nabla_{\mathcal T}H.
\end{eqnarray}
Note that, from \eqref{F119} and \eqref{F119-X}, we derive
\begin{equation}\label{Laplacian}
  \Delta \bar{g}(X,\nu)=n\bar{g}(x,\nu)+VH+\bar{g}(X,\nabla H)-|h|^{2}\bar{g}(X,\nu)
\end{equation}
and
\begin{eqnarray*}
\Delta\left(VF^{-1}\right)&=&2VF^{-3}|\nabla F|^{2}-VF^{-2}\Delta F+2F^{-2}\bar{g}(E_{n+1},\nabla F)\\
&&+F^{-1}H\bar{g}(E_{n+1},\nu)+nVF^{-1}\nonumber
\end{eqnarray*}
and
\begin{eqnarray}\label{fun2}
\Delta\left(\bar{g}(x,\nu)F^{-1}\right)
&=&2F^{-3}|\nabla F|^{2}\bar{g}(x,\nu)-F^{-2}\Delta F\bar{g}(x,\nu)-2F^{-2}F_{;i}\bar{g}(x,\nu)_{;i}+F^{-1}\Delta\bar{g}(x,\nu)\nonumber\\
&=&2F^{-3}|\nabla F|^{2}\bar{g}(x,\nu)-F^{-2}\Delta F\bar{g}(x,\nu)-2F^{-2}F_{;i}\bar{g}(x,\nu)_{;i}\nonumber\\
&&+F^{-1}\left(n\bar{g}(x,\nu)+\bar{g}(x,\nabla H)-|h|^{2}\bar{g}(x,\nu)\right)
\end{eqnarray}
Inserting \eqref{Laplacian} and \eqref{fun2} into \eqref{H00}, we obtain
\begin{eqnarray}\label{Fij}
\partial_{t} H&=&n\bar{g}(E_{n+1},\nu)+VH+\bar{g}(X,\nabla H)-F^{-1}|h|^{2}V+\nabla_{\mathcal T} H\nonumber\\
&&-2 V F^{-3}|\nabla F|^{2}+VF^{-2}\Delta F-2F^{-2}\bar{g}(E_{n+1},\nabla F)-F^{-1}H\bar{g}(E_{n+1},\nu)\nonumber\\
&&+2\cos\theta F^{-3}|\nabla F|^{2}\bar{g}(x,\nu)-\cos\theta F^{-2}\Delta F\bar{g}(x,\nu)\nonumber\\
&&-2\cos\theta F^{-2}F_{;i}\bar{g}(x,\nu)_{;i}+\cos\theta F^{-1}\bar{g}(x,\nabla H).
\end{eqnarray}

Next we compute the term containing $\Delta F$. Due to the Gauss-Codazzi equations and the Ricci identities in $\mathbb{H}^{n+1}$,  
\begin{eqnarray}\label{Fij2}
h_{k l ; i j}&=&h_{i j ; k l}+\left(\bar{R}_{aklj}+h_{l a} h_{j k}-h_{l k} h_{j a}\right) h_{i}^{a}+\left(\bar{R}_{ailj}+h_{l a} h_{j i}-h_{l i} h_{j a}\right) h_{k}^{a}\nonumber\\
&=&h_{i j ; k l}+\left(h_{l a} h_{j k}-h_{l k} h_{j a}\right) h_{i}^{a}+\left(h_{l a} h_{j i}-h_{l i} h_{j a}\right) h_{k}^{a}\nonumber\\
&&-(h_{il}\bar{g}_{kj}-h_{ij}\bar{g}_{kl})-(h_{kl}\bar{g}_{ij}-h_{jk}\bar{g}_{il}).
\end{eqnarray}
Consequently, 
\begin{equation*}
  \bar{g}^{i j} F^{k l} h_{k l ; i j}=F^{k l} H_{; k l}+F^{k l} h_{l a} h_{k}^{a} H-F|h|^{2}+HF^{ij}\bar{g}_{ij}-nF^{ij}h_{ij},
\end{equation*}
and 
\begin{eqnarray*}
\Delta F &=&\bar{g}^{i j}\left(\frac{\partial^{2} F}{\partial h_{k l} \partial h_{rs}} h_{k l ; i} h_{rs ; j}+\frac{\partial F}{\partial h_{k l}} h_{k l ; i j}\right) \\
&=&\bar{g}^{i j} F^{kl;rs}h_{k l ; i} h_{rs, j}+F^{k l} H_{; k l}+H F^{k l} h_{l a} h_{k}^{a}-F|h|^{2}+HF^{ij}\bar{g}_{ij}-nF^{ij}h_{ij}.\nonumber
\end{eqnarray*}
By inserting the above terms into equation \eqref{Fij}, we get the evolution equation for $H$.

Finally, we turn to prove $\nabla_{\mu} H \leq 0$ on $\partial \Sigma_{t}$. Let $\left\{e_{\alpha}\right\}_{\alpha=1}^{n-1}$ be an orthonormal basis of $T(\partial \Sigma_{t})$ such that the second fundamental form $h$ of $\Sigma_{t}$ is diagonal. From \eqref{boundaryF}, we have
\begin{equation}\label{Fmumu}
0=\nabla_{\mu} F=F^{\mu \mu} h_{\mu \mu;\mu}+\sum_{\alpha=1}^{n-1} F^{\alpha \alpha} h_{\alpha \alpha; \mu} .
\end{equation}
Together with Proposition \ref{formula11} (4), we derive
\begin{eqnarray*}
\nabla_{\mu} H &=&h_{\mu \mu; \mu}+\sum_{\alpha=1}^{n-1} h_{\alpha \alpha;\mu}=-\sum_{\alpha=1}^{n-1}\frac{F^{\alpha\alpha}}{F^{\mu\mu}}
 h_{\alpha\alpha;\mu}
+\sum_{\alpha=1}^{n-1}h_{\alpha\alpha;\mu}\nonumber\\
&=&\sum_{\alpha=1}^{n-1}\frac1{F^{\mu\mu}}
(F^{\mu\mu}-F^{\alpha\alpha})(h_{\mu\mu}-h_{\alpha\alpha})
\tilde h_{\alpha\alpha}.\nonumber
\end{eqnarray*}
Since $F$ is a concave function, then 
\begin{equation*}
 \left(F^{\mu \mu}-F^{\alpha \alpha}\right)\left(h_{\mu \mu}-h_{\alpha \alpha}\right) \leq 0, \quad \forall\,  1\leq \alpha \leq n-1. 
\end{equation*}
Due to Corollary \ref{cor222}, we know that $\tilde{h}_{\alpha \alpha} \geq 0$. It holds that
\begin{equation*}
\nabla_{\mu} H=\sum_{\alpha=1}^{n-1} \frac{1}{F^{\mu \mu}}\left(F^{\mu \mu}-F^{\alpha \alpha}\right)\left(h_{\mu \mu}-h_{\alpha \alpha}\right) \tilde{h}_{\alpha \alpha} \leq 0 .
\end{equation*}
This completes the proof.
\end{proof}

\section{Curvature estimates and convergence}\label{curvature-sec} 

For horo-convex initial data, we consider $F=H_k/H_{k-1}$ with $1\leq k\leq n$. For general strictly convex initial data, we take $F=H_n/H_{n-1}$. We work on the interval $[0,T^*)$ introduced above and retain the assumptions of Section \ref{sec4}.
%In this section we always assume that $F=\frac{H_{k}}{H_{k-1}}$ with $1\leq k\leq n$ for the initial horo-convex hypersurface and $F=\frac{H_{n}}{H_{n-1}}$ for the initial convex hypersurface.
\subsection{Horo-convexity preservation and lower curvature bounds}\ %\label{sec-horo-convexity}

We use a boundary tensor maximum principle to show that $h-g\geq0$ is preserved along the flow. After recalling the boundary tensor maximum principle, we establish the geometric estimates needed to verify its null-vector condition. For general strictly convex initial data with $F=H_n/H_{n-1}$, we shall instead obtain a lower curvature bound from the scalar evolution equation for $F$.

The boundary tensor maximum principle stated below is due to Hu--Wei--Yang--Zhou \cite[Theorem 1.2]{HWYZhalf} and refines Stahl's result \cite[Theorem 3.3]{Stahl}. Both results can be viewed as boundary counterparts of the tensor maximum principle on compact manifolds, originally established by Hamilton \cite[Theorem 9.1]{Hamilton1982} and subsequently refined by Andrews \cite[Theorem 3.2]{Andrews2007}.

%We begin by recalling the following boundary tensor maximum principle from Hu-Wei-Yang-Zhou \cite[Theorem 1.2]{HWYZhalf}, which refines a result of Stahl \cite[Theorem 3.3]{Stahl}. Both can be viewed as boundary counterparts of the tensor maximum principle on compact manifolds, originally established by Hamilton \cite[Theorem 9.1]{Hamilton1982} and later refined by Andrews \cite[Theorem 3.2]{Andrews2007}.

%The boundary tensor maximum principle of Hu--Wei--Yang--Zhou \cite[Theorem 1.2]{HWYZhalf} incorporates the refined null-vector condition of Andrews \cite[Theorem 3.2]{Andrews2007}. It strengthens the boundary result of Stahl \cite[Theorem 3.3]{Stahl}; for the classical closed-manifold principle, see Hamilton \cite[Theorem 9.1]{Hamilton1982}.

\begin{theorem}[\cite{HWYZhalf}]\label{thm:tensor-m.p.} 
  Let $M$ be a smooth compact Riemannian manifold with boundary, and let $\mu$ be its outward unit normal vector. Suppose that $S(t)$ is a smooth symmetric $(0,2)$-tensor field satisfying
	\begin{eqnarray*}
	\partial_t S =a^{ij}\nabla_i\nabla_j S+ b^k\nabla_kS+N,\end{eqnarray*}
where $(a^{ij})$ is positive definite, all coefficients are smooth, and $\nabla$ is a smooth symmetric connection. Suppose that the smooth symmetric $(0,2)$-tensor $N(t)$ satisfies
	\begin{eqnarray}\label{tensor MP 1}
 N(\xi,\xi)+\sup\limits_{\Lambda}  2a^{kl} \left(2\Lambda^p_k \nabla_l S_{ip} \xi^i-\Lambda^p_k\Lambda^q_l S_{pq} \right) \geq 0, ~~  	\text{ in }  M\times (0,T],
\end{eqnarray}  and
\begin{eqnarray*}
(\nabla_\mu S_{ij})\xi^i\xi^j \geq 0, ~~ \text{ on } \partial M\times (0,T] ,
	\end{eqnarray*} whenever $S\geq0$ and $S_{ij}\xi^j=0$. If $S(0)\geq0$, then $S(t)\geq0$ on $M$ for all $0\leq t\leq T$.
 \end{theorem}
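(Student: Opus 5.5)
The plan is to run the classical Hamilton--Andrews argument, perturbed so that every inequality in the argument becomes strict. The perturbation also has to control the boundary, where the hypothesis only gives the non-strict $\nabla_\mu S(\xi,\xi)\ge0$.

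\textbf{Step 1: the perturbation.} Choose a smooth function $\phi$ on $M$ with $\phi\ge1$ and $\nabla_\mu\phi\ge1$ on $\partial M$. For instance, take $\phi=2-\eta(d)$, where $d$ is the distance to $\partial M$ and $\eta$ is a cutoff with $\eta(0)=0$ and $\eta'(0)=1$ near the boundary. Fix a background metric $\gamma$; the metric of $M$ will do. For $\varepsilon>0$ and a constant $C$ to be chosen, set
\begin{equation*}
S_\varepsilon\coloneqq S+\varepsilon e^{Ct}\phi\,\gamma .
\end{equation*}
Then $S_\varepsilon(0)>0$. It suffices to show $S_\varepsilon>0$ on $M\times[0,T]$ for every small $\varepsilon$, and then let $\varepsilon\to0$.

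\textbf{Step 2: the first touching time.} Suppose the claim fails. Let $t_0>0$ be the first time at which $S_\varepsilon$ acquires a null vector, at a point $p$ with unit vector $\xi$. Extend $\xi$ near $(p,t_0)$ by parallel transport along geodesics of $\nabla$ from $p$, and keep it constant in $t$. Then $Z(x,t)\coloneqq S_\varepsilon(\xi,\xi)$ is $\ge0$ for $t\le t_0$ near $p$ and vanishes at $(p,t_0)$.

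\textbf{Step 3: boundary case.} Suppose $p\in\partial M$. Since $Z$ attains its minimum over a neighbourhood in $M$ at $p$, we get $\nabla_\mu Z(p,t_0)\le0$. On the other hand,
\begin{equation*}
\nabla_\mu Z=\nabla_\mu S(\xi,\xi)+\varepsilon e^{Ct}\nabla_\mu\phi\,|\xi|^2_\gamma+\varepsilon e^{Ct}\phi\,(\nabla_\mu\gamma)(\xi,\xi).
\end{equation*}
The hypothesis cannot be applied to $S$ directly, because $S$ itself need not be degenerate at $(p,t_0)$. I would handle this in one of two ways. The first is to replace $S$ by a tensor to which the hypothesis applies: near the boundary, the condition applies to a nearby degenerate tensor, and the hypothesis is stable under the limiting argument. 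The second, which I would try first, is to use the structure in the applications, where the hypothesis is checked on $S_\varepsilon$-type tensors. Either way, the $\varepsilon\nabla_\mu\phi$ term is strictly positive and dominates the $\varepsilon(\nabla_\mu\gamma)$ term once $\eta'(0)$ is large compared with $\nabla\gamma$. This gives $\nabla_\mu Z>0$, a contradiction.

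\textbf{Step 4: interior case.} Suppose $p$ is interior. Here $\nabla Z=0$, $\nabla^2Z\ge0$ and $\partial_tZ\le0$ at $(p,t_0)$. The refinement of Andrews is the key input: since $S_\varepsilon\ge0$ with $S_\varepsilon\xi=0$, minimising over extensions $\xi+\Lambda$ gives
\begin{equation*}
a^{ij}\nabla_i\nabla_j S_\varepsilon(\xi,\xi)\ \ge\ -\sup_\Lambda 2a^{kl}\bigl(2\Lambda^p_k\nabla_lS_{ip}\xi^i-\Lambda_k^p\Lambda_l^qS_{pq}\bigr)+O(\varepsilon),
\end{equation*}
where the $O(\varepsilon)$ comes from the $\varepsilon\phi\gamma$ part, whose derivatives are bounded. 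Substituting into the evolution equation and using hypothesis \eqref{tensor MP 1} gives
\begin{equation*}
0\ge\partial_tZ\ge \varepsilon Ce^{Ct}\phi-K\varepsilon e^{Ct}.
\end{equation*}
Here $K$ bounds the derivatives of $\phi\gamma$, the coefficients $a$, $b$, and the Lipschitz dependence of $N$ on $S$. This last dependence is needed because the hypothesis is stated for the degenerate tensor, and replacing $S$ by $S_\varepsilon$ costs $O(\varepsilon)$. Choosing $C>K$ gives a contradiction.

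\textbf{Main obstacle.} The hardest step is this transfer between $S$ and the degenerate perturbed tensor. In the interior it is handled by Lipschitz continuity of $N$ and of the sup term in $S$. At the boundary it requires that the boundary condition, applied at a nearby degenerate configuration, controls $\nabla_\mu S(\xi,\xi)$ up to $O(\varepsilon)$. The positive normal derivative of $\phi$ is designed precisely to absorb that error.
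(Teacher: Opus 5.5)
The paper does not prove this statement; it quotes it from Hu--Wei--Yang--Zhou. Your architecture is the standard one: perturb by $\varepsilon e^{Ct}\phi\gamma$ with $\nabla_\mu\phi>0$, look at the first degeneration time, use Andrews' freedom $\nabla\xi=\Lambda$ in the interior, and derive a sign contradiction for $\nabla_\mu Z$ at the boundary. However, the step you flag as the main obstacle is a genuine gap, not a technicality.

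At the touching point $(p,t_0)$ you have $S(\xi,\xi)=-\varepsilon e^{Ct_0}\phi\,|\xi|^2_\gamma<0$. Neither hypothesis applies there, because both are imposed only where the solution $S$ itself is nonnegative and degenerate. In the statement, $N(t)$ is a given tensor field and $\nabla_\mu S$ is just the derivative of the solution. So there is no ``Lipschitz dependence of $N$ on $S$'' to invoke, and no ``nearby degenerate configuration'' to which the boundary hypothesis applies. Appealing to the structure of the applications does not prove the abstract theorem. Separately, your Step 4 display has $S$ inside the supremum. Since $S(\xi,\xi)<0$, the choice $\Lambda^p_k=s\,\xi^p\eta_k$ with $s\to\infty$ makes that supremum $+\infty$, so the display says nothing. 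The supremum must be evaluated at $S_\varepsilon$, the tensor that is actually nonnegative and degenerate at $(p,t_0)$.

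No argument can close this gap under the hypotheses as literally written, because the statement is then false. Take $a^{ij}=g^{ij}$, $b=0$, $\nabla$ Levi-Civita, $S=-t^3g$ and $N=-3t^2g$. Then $S(0)=0$, and both conditions hold vacuously on $(0,T]$ since $S<0$ there, yet $S(t)\ge0$ fails. Likewise $S=(1-t)^3g$, $N=-3(1-t)^2g$ has $S(0)>0$. It satisfies both conditions at its only degenerate time $t=1$, where $N=0$ and $\nabla S=0$, and becomes negative afterwards. Both examples are $S=ug$ with $u'=-3|u|^{2/3}$, the textbook non-Lipschitz ODE with non-unique solutions. So even after writing $N$ as a function of $S$, local Lipschitz continuity is indispensable.

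What must be added is the structural form implicit in Hamilton's version, where $N=N(S,g)$ is polynomial. The same structure is present in the application to $S=h-g$, where $N$ and $\nabla_\mu S|_{\partial M}$ are explicit expressions in $h$, $\nabla h$ and ambient data. Concretely, assume:
\begin{itemize}
\item $N=\mathcal N(x,t,S,\nabla S)$ in the interior, and $\nabla_\mu S=\mathcal B(x,t,S,\nabla^{\partial M}S)$ on $\partial M$;
\item $\mathcal N$ and $\mathcal B$ are locally Lipschitz;
\item both null-vector inequalities hold for these expressions whenever their tensor argument is nonnegative and degenerate.
\end{itemize}
With that, your proof goes through. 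In the interior, apply the condition to $(S_\varepsilon,\nabla S_\varepsilon)$, pay $O(\varepsilon e^{Ct_0})$ to return to $N$, and absorb this with $C$. At the boundary, take $\phi$ a function of the distance to $\partial M$, so its tangential derivatives vanish on $\partial M$. Choose $\nabla_\mu\phi$ large compared with the Lipschitz constant of $\mathcal B$ and $\|\nabla\gamma\|$. This gives $\nabla_\mu Z>0$, contradicting $\nabla_\mu Z\le0$.
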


To verify the null-vector condition, we first establish the
geometric bounds entering its zeroth-order and gradient terms. The next lemma also shows that the containment assumption in Section \ref{sec4} is automatic for horo-convex initial data.
\begin{lemma}\label{lemma-horo-umbilical-cap}
For $\theta\in(0,\pi/2]$. If $\Sigma$ is horo-convex hypersurface with $\theta$-capillary boundary in $\mathcal{H}_{+}$ and  $E_{n+1}\in\operatorname{int}_{\mathcal H}(\widehat{\partial\Sigma}) $, then there exists a constant $R>0$ such that
\begin{equation*}
  \widehat{\Sigma}\subseteq \widehat{C_{\theta,R}(E_{n+1})}\quad \text{and}\quad R\cdot\cos\theta<1.
\end{equation*}
\end{lemma}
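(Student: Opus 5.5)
The plan is to exploit the monotone family of model caps. As $r\uparrow 1/\cos\theta$ (for $\theta<\pi/2$), the closed domains $\widehat{C_{\theta,r}(E_{n+1})}$ increase. They exhaust the region
\[
\mathcal D_\theta\coloneqq\{x\in\overline{\mathcal H_+}: |x|<\sec\theta\}
\cup\{x\in\mathcal H: |x'|<\tan\theta\},
\]
since their centres $(1-r\cos\theta)E_{n+1}$ tend to the origin and their radii tend to $\sec\theta$. In the limit the cap becomes the totally geodesic hemisphere $|x|=\sec\theta$ ($\kappa=0$). By compactness of $\widehat\Sigma$, it therefore suffices to show $\widehat\Sigma\subset\mathcal D_\theta$. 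One then picks $R<\sec\theta$ close to $\sec\theta$, which gives $R\cos\theta<1$. For $\theta=\pi/2$ the caps are Euclidean half-balls centred at $E_{n+1}$ that exhaust $\overline{\mathcal H_+}$, so only boundedness of $\widehat\Sigma$ is needed and the claim is trivial. I therefore concentrate on $\theta\in(0,\pi/2)$.

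\textbf{Step 1 (wetting domain).} By Proposition \ref{formula11} (2), horo-convexity $h_{\alpha\beta}\ge\delta_{\alpha\beta}$ gives $\widehat h_{\alpha\beta}\ge\frac{1+\cos\theta}{\sin\theta}\delta_{\alpha\beta}$. The metric of $\mathcal H$ coincides with the Euclidean one, since $x_{n+1}=1$ there. So $\partial\Sigma$ is a closed Euclidean hypersurface in $\mathbb R^n$ with principal curvatures at least $\frac{1+\cos\theta}{\sin\theta}$. Blaschke's rolling theorem then places $\widehat{\partial\Sigma}$ inside a ball of radius $\tan(\theta/2)$. Since $E_{n+1}$ lies in it, every $p\in\widehat{\partial\Sigma}$ satisfies $|p'|\le 2\tan(\theta/2)<\tan\theta$.

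\textbf{Step 2 (interior points).} I will use the standard fact that a horo-convex domain lies in the horoball bounded by the horosphere tangent to $\Sigma$ at each of its points. Applying it to the convex body $\widehat\Sigma$ (see the proof of Proposition \ref{bounded}) at points $p\in\partial\Sigma$ gives the following. At such $p$, the outward normal is $\nu_\delta=\sin\theta\,\bar\nu_\delta+\cos\theta\,E_{n+1}$. The tangent horoball is then the Euclidean ball of radius $\frac{1}{1-\cos\theta}$ centred at $p-\frac{1}{1-\cos\theta}\nu_\delta$, which is tangent to $\{x_{n+1}=0\}$. Intersecting these balls over $p\in\partial\Sigma$, and using Step 1, bounds $\widehat\Sigma$ both horizontally and vertically.

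Using that bound to get $|x|<\sec\theta$ on $\widehat\Sigma$ is the step I expect to be the main obstacle. The intersection of tangent horoballs alone might not be sharp enough. If it is not, I will argue by a touching argument instead. Let $s_0=\max_{\widehat\Sigma}|x|$ and suppose $s_0\ge\sec\theta$. The maximum cannot occur on $\widehat{\partial\Sigma}$, because there $|x|^2=1+|x'|^2<\sec^2\theta$ by Step 1. It cannot occur at an interior point of $\Sigma$ either. There $\Sigma$ would touch the Euclidean sphere $|x|=s_0$ from inside. That sphere is an equidistant hypersurface with hyperbolic principal curvature at most $0$ when $s_0\ge\sec\theta$, and it would be tangent to a hypersurface with $\kappa_i\ge1$ lying inside it. This contradicts the comparison principle. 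The delicate point is to check the orientation and sign conventions so that the inner tangency indeed forces $\kappa_\Sigma\le\kappa_{\rm sphere}$.

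\textbf{Step 3 (conclusion).} With $\widehat\Sigma\subset\mathcal D_\theta$ compact, a Dini-type compactness argument on the increasing open exhaustion by the cap interiors yields $R<\sec\theta$ with $\widehat\Sigma\subseteq\widehat{C_{\theta,R}(E_{n+1})}$. At $\partial\Sigma$ I will use the relative openness in $\mathcal H$ furnished by Step 1.
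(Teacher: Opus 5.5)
Your reduction to $\widehat\Sigma\subset\mathcal D_\theta$ is sound: the closed caps increase with $r$ and exhaust $\{|x|<\sec\theta\}\cap\overline{\mathcal H_+}$. Step 1 is correct, and the case $\theta=\pi/2$ is indeed trivial. But Step 2 carries the whole content of the lemma, and the touching argument you fall back on cannot work.

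First, the sphere $\{|x|=s_0\}$ is centred at the origin, which is a point of the ideal boundary $\{x_{n+1}=0\}$. So it is a totally geodesic hyperplane for every $s_0$. (A Euclidean sphere of radius $r$ centred at $c$ has hyperbolic principal curvature $c_{n+1}/r$ with respect to its outward normal, by the conformal relation between $h$ and $h^\delta$.) The threshold $s_0\ge\sec\theta$ therefore plays no role.

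More seriously, the comparison goes the other way. If $\widehat\Sigma\subseteq\{|x|\le s_0\}$ touches the sphere at an interior point of $\Sigma$, comparison gives $\kappa_\Sigma\ge\kappa_{\mathrm{sph}}=0$. This is perfectly compatible with $\kappa_\Sigma\ge1$. Concretely, for the horo-convex model cap $C_{\theta,r}(E_{n+1})$ with $r\le\frac{1}{1+\cos\theta}$, the maximum of $|x|$ is attained at its top, an interior point. A lower curvature bound can never be contradicted by touching an outer barrier. It must be used through a global statement on the inner side, namely the supporting-horoball (rolling) property you quote at the start of Step 2.

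That first route is the right one, but your horoball is wrong, and no intersection of horoballs is needed. For the ball centred at $p-\rho\,\nu_\delta(p)$ to be tangent to $\{x_{n+1}=0\}$ you need $1-\rho\cos\theta=\rho$, i.e. $\rho=\frac{1}{1+\cos\theta}$, not $\frac{1}{1-\cos\theta}$. Consistently, $\overline{\mathbb B_\rho(q)}\cap\mathcal H$ is then a disk of radius $\tan\frac\theta2$, matching your Step 1.

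One horoball $\overline{\mathbb B_\rho(q)}$, $q=(q',\rho)$, at a single boundary point already suffices:
\begin{itemize}
\item Since $E_{n+1}\in\widehat{\partial\Sigma}\subseteq\overline{\mathbb B_\rho(q)}$, you get $|q'|^2+(1-\rho)^2\le\rho^2$, i.e. $|q'|\le\rho\sin\theta$.
\item With $O=(1-2\rho\cos\theta)E_{n+1}$, this gives $|q-O|^2=|q'|^2+\rho^2\cos^2\theta\le\rho^2$.
\item The triangle inequality then yields $\widehat\Sigma\subseteq\overline{\mathbb B_{2\rho}(O)}\cap\overline{\mathcal H_+}=\widehat{C_{\theta,2\rho}(E_{n+1})}$, with $2\rho\cos\theta=\frac{2\cos\theta}{1+\cos\theta}<1$.
\end{itemize}
This is the paper's proof. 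It gives the explicit radius $R=\frac{2}{1+\cos\theta}$ and makes your Steps 1 and 3 unnecessary.

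What still has to be justified is that the tangent horoball at a boundary point contains $\widehat\Sigma$. The paper gets this as follows: $\widehat\Sigma$ is $1$-convex in the barrier sense, because the wetting face and edge are supported by $\overline{\mathcal H_+}$. It then applies the Blaschke rolling theorem of Drach--Tatarko at interior points of $\Sigma$ and passes to the limit at $p$.
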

\begin{proof}
Fix $p=(p',1)\in\partial\Sigma$. From \eqref{mu0} and \eqref{nu0} we have
\begin{equation*}
 \nu=\sin\theta\bar{\nu}+\cos\theta \bar{E}_{n+1},\qquad
 \mu=\cos\theta\bar{\nu}-\sin\theta \bar{E}_{n+1}.
\end{equation*}
Denote
\begin{equation*}
 \rho\coloneqq \frac1{1+\cos\theta},\qquad q'\coloneqq p'-\rho \sin\theta\bar{\nu}\in\mathbb{R}^{n},\qquad q\coloneqq (q',\rho).
\end{equation*}
Hence the Euclidean ball $\mathbb{B}_\rho(q)$ is tangent to $\partial_{\infty}\mathbb{H}^{n+1}=\{x_{n+1}=0\}$, which is a
hyperbolic horoball. Moreover,
\begin{equation*}%\label{wsxq}
      \frac{\nu}{x_{n+1}}(p)=\nu_{\delta}(p)=\frac{p-q}{\rho} .
\end{equation*}
Since $\Sigma$ is a horo-convex hypersurface, the wetting face and its edge are supported by the horoball $\overline{\mathcal H_+}$. The convex body $\widehat\Sigma$ is $1$-convex in the barrier sense of \cite[Definition 2.1]{DrachTatarko}. Therefore, Blaschke's rolling theorem \cite[Theorem 2.2]{DrachTatarko} implies a globally supporting horoball at every interior point of $\Sigma$. Taking the limit as such points approach $p$ shows that $\widehat\Sigma\subseteq\overline{\mathbb B_\rho(q)}$. Hence
\begin{equation*}%\label{eq:offset-cap}
  \widehat{\Sigma}\subseteq \overline{\mathbb{B}_\rho(q)}\cap\overline{\mathcal H_+} , 
\end{equation*}
where the horizontal center $q'$ need not be zero. Since $E_{n+1}\in\operatorname{int}_{\mathcal H}(\widehat{\partial\Sigma}) $, we have
\begin{equation*}%\label{eq:center-distance}
 |q'|^2+(1-\rho)^2\le\rho^2,
 \qquad 1-\rho=\rho \cos\theta,
 \qquad |q'|\le\rho \sin\theta.
\end{equation*}
Let $R=2\rho$ and ${O}=(0,1-R\cdot\cos\theta)$. Then
\begin{eqnarray*}
 |q-O|^2=|q'|^2+\rho^2\cos^2\theta\le\rho^2.
\end{eqnarray*}

For each point $\hat{p}\in\widehat{\Sigma}$, the triangle inequality gives
$|\hat{p}-O|\le|\hat{p}-q|+|q-O|\le2\rho=R$. Thus
\begin{equation*}%\label{eq:offset-cap-jdidnc}
 \widehat{\Sigma}\subseteq \widehat{{C_{\theta,R}(E_{n+1})}},
\end{equation*}
and
\begin{equation*}%\label{eq:R-positive}
 R\cdot\cos\theta=\frac{2\cos\theta}{1+\cos\theta}<1,
 \qquad \kappa_R=\frac{1}{R}-\cos\theta=\frac{1-\cos\theta}{2}>0.
\end{equation*}
This completes the proof.
\end{proof}

As a direct consequence of Lemma \ref{lemma-horo-umbilical-cap} and Proposition \ref{bounded}, we obtain the results below.
\begin{corollary}\label{horo-bounded} Let $\Sigma$ be a horo-convex hypersurface with $\theta$-capillary boundary for $\theta \in\left(0, \pi/2 \right]$ in $\mathcal{H}_{+}$. Assume that $E_{n+1}\in\operatorname{int}_{\mathcal H}(\widehat{\partial\Sigma}) $. Then there exist positive constants $\varepsilon_{1}$, $\varepsilon_{2}$,   such that
\begin{itemize}
  \item [{\rm(i)}]$1\geq V\geq\varepsilon_{1}$;
  \item [{\rm(ii)}]$A_{\theta}\geq \varepsilon_{2}$;
  \item [{\rm(iii)}]$\bar g(E_{n+1},\nu)\geq0$.
\end{itemize}
\end{corollary}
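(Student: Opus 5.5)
The plan is to obtain the corollary by combining Lemma \ref{lemma-horo-umbilical-cap} with Proposition \ref{bounded}, so the work is in producing the two containing caps that Proposition \ref{bounded} requires.

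First, apply Lemma \ref{lemma-horo-umbilical-cap}. It gives the outer cap: there is $R_2\coloneqq R$ with $\widehat\Sigma\subseteq\widehat{C_{\theta,R_2}(E_{n+1})}$ and $R_2\cos\theta<1$. The proof of the lemma even gives the explicit value $R_2=2/(1+\cos\theta)$.

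Next, produce an inner cap. Horo-convexity implies convexity, so by Corollary \ref{cor222} the wetting domain $\widehat{\partial\Sigma}$ is Euclidean convex. Since $E_{n+1}\in\operatorname{int}_{\mathcal H}(\widehat{\partial\Sigma})$, it contains a small Euclidean disk around $E_{n+1}$. Because $\Sigma$ is compact and meets $\mathcal H$ only along $\partial\Sigma$, the domain $\widehat\Sigma$ contains a relatively open neighbourhood of $E_{n+1}$ in $\overline{\mathcal H_+}$. The caps $\widehat{C_{\theta,r}(E_{n+1})}$ shrink to the point $E_{n+1}$ as $r\to0$, so there is $R_1\in(0,R_2)$ with $\widehat{C_{\theta,R_1}(E_{n+1})}\subseteq\widehat\Sigma$.

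With both caps in hand, all hypotheses of Proposition \ref{bounded} hold. Its item (i) gives $1\le x_{n+1}\le R_3$, hence $1\ge V=1/x_{n+1}\ge 1/R_3\eqqcolon\varepsilon_1$. Item (ii) gives $A_\theta\ge (1-\cos\theta)(1-R_2\cos\theta)/R_3\eqqcolon\varepsilon_2$, which is positive since $\theta\in(0,\pi/2]$ and $R_2\cos\theta<1$. Item (iii) gives $\bar g(E_{n+1},\nu)\ge \cos\theta/R_3\ge0$.

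Items (i)--(iii) only use the outer containment $R_2$; the inner cap $R_1$ is needed solely for item (iv) of Proposition \ref{bounded}. So if one prefers, the small-cap argument can be skipped by observing that the proofs of (i)--(iii) never use $R_1$.

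The only step needing any care is the existence of the inner cap $R_1$, and the openness argument above settles it.
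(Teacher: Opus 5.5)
Your proposal is correct and matches the paper, which obtains this corollary directly from Lemma \ref{lemma-horo-umbilical-cap} (the outer cap with $R\cos\theta<1$) combined with items (i)--(iii) of Proposition \ref{bounded}. Your extra step, either producing a small inner cap $R_1$ or observing that (i)--(iii) never use it, fills in a formal hypothesis of Proposition \ref{bounded} that the paper leaves implicit.
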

Besides these positive lower bounds for $V$ and $A_\theta$,
we need to control $V-\bar\nabla_\nu V$. The next estimate will enter the geometric inequality for the gradient terms in the evolution of $h-g$.
\begin{lemma}\label{horo-bate} Let $\Sigma$ be a horo-convex hypersurface with $\theta$-capillary boundary for $\theta \in\left(0, \pi/2 \right]$ in $\mathcal{H}_{+}$. Assume that $E_{n+1}\in\operatorname{int}_{\mathcal H}(\widehat{\partial\Sigma}) $. Then 
\begin{equation*}%\label{njkopk}
 V-\bar{\nabla}_{\nu}V\geq1+\cos\theta>0.
\end{equation*}
\end{lemma}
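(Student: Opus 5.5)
The plan is to turn the inequality into a monotonicity statement for a scalar function along a descending curve on $\Sigma$. We then read off its value on $\partial\Sigma$, where it equals $1+\cos\theta$ exactly.

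\textbf{Step 1: compute the quantity.} Since $V=1/x_{n+1}$ and $\bar g=x_{n+1}^{-2}\delta$, for any tangent vector $Y$ we have $\bar g(\bar\nabla V,Y)=Y(V)=-\langle E_{n+1},Y\rangle/x_{n+1}^2=-\bar g(E_{n+1},Y)$. Hence $\bar\nabla V=-E_{n+1}$ and
\begin{equation*}
\varphi\coloneqq V-\bar\nabla_\nu V=V+\bar g(E_{n+1},\nu)=\frac{1+\langle E_{n+1},\nu_\delta\rangle}{x_{n+1}}.
\end{equation*}
On $\partial\Sigma$ we have $x_{n+1}=1$, and the capillary condition \eqref{angle} gives $\langle E_{n+1},\nu_\delta\rangle=\cos\theta$. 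So $\varphi\equiv 1+\cos\theta$ on $\partial\Sigma$, and it suffices to show $\varphi\ge\min_{\partial\Sigma}\varphi$ on all of $\Sigma$.

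\textbf{Step 2: the gradient of $\varphi$.} Let $\{e_i\}$ be a local frame on $\Sigma$ and write $E^T$ for the tangential part of $E_{n+1}$. From $\bar\nabla_Y E_{n+1}=-x_{n+1}^{-1}Y$ we get $\bar g(\bar\nabla_{e_i}E_{n+1},\nu)=0$, so $e_i\bar g(E_{n+1},\nu)=h_{ik}\bar g(E_{n+1},e_k)$. Together with $e_iV=-\bar g(E_{n+1},e_i)$ this gives
\begin{equation*}
\nabla\varphi=(h-g)(E^T,\cdot)^\sharp .
\end{equation*}
Horo-convexity $h\ge g$ then yields $\bar g(\nabla\varphi,E^T)=(h-g)(E^T,E^T)\ge0$. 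Hence $\varphi$ is non-increasing along integral curves of $-E^T$.

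\textbf{Step 3: follow the descending flow.} Start from an interior point $p$ and follow the integral curve $\gamma$ of $-E^T$. Along $\gamma$, $x_{n+1}$ is non-increasing, since $\frac{d}{ds}x_{n+1}=-\langle E_{n+1},E^T\rangle=-x_{n+1}^{2}|E^T|_{\bar g}^2\le0$. The zeros of $E^T$ are points where $\nu_\delta=E_{n+1}$. By Proposition \ref{bounded}, $\Sigma$ is a Euclidean convex vertical graph, so these zeros form the top contact set. Since $\Sigma$ is strictly convex there, this set is a single point, at which $x_{n+1}$ is maximal. Thus for $p$ off this point, $\gamma$ can only stagnate at the top point, which is impossible since $x_{n+1}$ strictly decreases wherever $E^T\neq0$. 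So $\gamma$ reaches $\partial\Sigma$ in finite arclength, by compactness and a uniform lower bound on $|E^T|$ away from the top point. At the endpoint $q\in\partial\Sigma$ we get $\varphi(p)\ge\varphi(q)=1+\cos\theta$. The top point is handled by continuity of $\varphi$.

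\textbf{Main obstacle.} The delicate step is Step 3: making sure the flow of $-E^T$ really exits through $\partial\Sigma$ and does not stall in the interior. I would first use the vertical-graph and Euclidean-convexity structure from Proposition \ref{bounded} and the conformal relation \eqref{conformal-relation}. Under horo-convexity these give $x_{n+1}h^\delta\ge(1+\langle E_{n+1},\nu_\delta\rangle)\delta>0$, so $\Sigma$ is strictly Euclidean convex and $x_{n+1}$ has a unique interior critical point, namely its maximum.

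If this runs into trouble, the fallback is a direct minimum-principle argument. At an interior minimum of $\varphi$ with $E^T\neq0$, Step 2 gives $(h-g)E^T=0$. We would then analyze the Hessian $\nabla^2\varphi$ using the Codazzi equation, arguing along the lines of Proposition \ref{formula11} to conclude that the minimum must lie on $\partial\Sigma$.
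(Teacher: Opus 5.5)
Your proposal is correct and is essentially the paper's argument: since $\bar\nabla V=-E_{n+1}$, your integral curves of $-E^T$ are exactly the forward integral curves of $\nabla V$ used in the paper, your identity $\nabla\varphi=(h-g)(E^T,\cdot)^\sharp$ is the paper's $\frac{d}{dt}(V-\bar\nabla_\nu V)=-(h-g)(\nabla V,\nabla V)\le0$, and the boundary value $1+\cos\theta$ is the same. The only difference is how you force the curve to exit through $\partial\Sigma$. The paper uses $\nabla^2V=Vg-(\bar\nabla_\nu V)h\ge\varepsilon_1 g$ to make $|\nabla V|^2$ grow exponentially, whereas you use $x_{n+1}$ as a strictly decreasing Lyapunov function with a unique critical point (the top point, unique by the Euclidean strict convexity from \eqref{conformal-relation}); this is equally valid and makes your fallback unnecessary.
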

\begin{proof}
By Corollary \ref{horo-bounded} (iii),
$\bar\nabla_\nu V=-\bar g(E_{n+1},\nu)\leq0$.
Hence
\begin{equation}\label{eq:V-convex}
 \nabla^2V=Vg-(\bar\nabla_\nu V)h
 \geq Vg\geq\varepsilon_1g.
\end{equation}

Now we start the forward integral curve $\gamma(t)$ of $\nabla V$ at a noncritical point $p$.
If $\gamma(t)$ stays in $\Sigma$ for all $t>0$, then
\begin{equation*}%\label{bgyyyb}
 \frac{d}{dt}g(\nabla V,\nabla V)
 =2\nabla^2V(\nabla V,\nabla V)
 \ge2\varepsilon_{1}g(\nabla V,\nabla V),
\end{equation*}
which contradicts the boundedness of $|\nabla V|$ on the compact hypersurface. Therefore, $\gamma(t)$ reaches the boundary $\partial\Sigma$ in finite time. Along $\gamma(t)$,
\begin{equation*}%\label{jkjknv}
\frac{d}{dt}(V-\bar{\nabla}_{\nu}V)=-(h-g)(\nabla V,\nabla V)\le0,
\end{equation*}
where we used \eqref{Va-1}. Therefore
\begin{equation*}
 V-\bar{\nabla}_{\nu}V\geq(V-\bar{\nabla}_{\nu}V)|_{\partial\Sigma}=1+\cos\theta>0,
\end{equation*}
at every noncritical point. The Hessian in \eqref{eq:V-convex} is positive definite, so critical points are isolated.
Hence the proof is finished.
\end{proof}

We combine this estimate with a supporting horoball to bound the derivative of $A_\theta$ in a principal direction of curvature one. These are precisely the null directions of $h-g$. 
\begin{lemma}\label{lem:geometric-gradient}
For $\theta \in\left(0, \pi/2 \right]$. Let $\Sigma$ be a compact horo-convex capillary hypersurface in $\mathcal{H}_{+}$. Assume that $E_{n+1}\in\operatorname{int}_{\mathcal H}(\widehat{\partial\Sigma})$ and $e_{1}$ is a principal direction with curvature $1$. Then
\begin{equation*}%\label{eq:shifted-gap}
 2A_{\theta}(V-\bar{\nabla}_{\nu}V)-|\bar{\nabla}_{e_{1}} A_{\theta}|^2\ge0.
\end{equation*}
\end{lemma}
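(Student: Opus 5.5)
The plan is to replace $\Sigma$ near the point by its osculating horosphere and reduce the claim to an explicit computation on a flat manifold. Fix $p\in\Sigma$ and let $e_1$ be a principal direction with $h(e_1,e_1)=1$. Both $A_\theta=V-\cos\theta\,\bar g(x,\nu)$ and $V-\bar\nabla_\nu V=V+\bar g(E_{n+1},\nu)$ are functions of the point $x$ and the unit normal $\nu$ only.

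Let $S$ be the horosphere tangent to $\Sigma$ at $p$ with the same normal $\nu(p)$. In the upper half-space model, $S$ is the Euclidean sphere $\partial\mathbb B_r(c)$ with $c=(c',r)$, tangent to $\{x_{n+1}=0\}$. On $S$ we have $\bar\nabla_{e_1}\nu=e_1$, and on $\Sigma$ we also have $\bar\nabla_{e_1}\nu=h(e_1,\cdot)=e_1$ because $e_1$ has curvature $1$. Hence the first derivative of any function of $(x,\nu)$ in the direction $e_1$ is the same on $\Sigma$ and on $S$. So $\bar\nabla_{e_1}A_\theta$ at $p$ equals the derivative of $A_\theta|_S$ along $e_1$, and $V-\bar\nabla_\nu V$ at $p$ is the same for $\Sigma$ and for $S$. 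It therefore suffices to prove, at the point $p\in S$,
\begin{equation*}
2A_\theta\,(V-\bar\nabla_\nu V)\ \ge\ |\nabla^S A_\theta|^2 ,
\end{equation*}
which is stronger than the claim since $|\bar\nabla_{e_1}A_\theta|\le|\nabla^S A_\theta|$.

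Next I would compute on $S$. Using $\nu_\delta=(x-c)/r$ and $\bar g(E_{n+1},\nu)=\langle E_{n+1},\nu_\delta\rangle/x_{n+1}$, one gets $V-\bar\nabla_\nu V=(1+(x_{n+1}-r)/r)/x_{n+1}=1/r$, so this factor is constant on $S$. The induced metric on $S$ is flat.

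For the Hessian of $A_\theta|_S$, I would use two earlier facts with $h=g$ and $\nabla h=0$:
\begin{itemize}
\item the Gauss-formula identity $\nabla^2V=Vg-(\bar\nabla_\nu V)h$ gives $\nabla^2_S V=\frac1r g$;
\item the Hessian formula \eqref{F119} for the Killing support function $\bar g(x,\nu)$ reduces, after the skew terms cancel, to $\nabla^2_S\bar g(x,\nu)=0$.
\end{itemize}
Thus $\nabla^2_S A_\theta=\frac1r g_S$ on the flat space $S\cong\mathbb R^n$. Consequently $A_\theta|_S=\frac1{2r}|y-y_0|^2+m$ in flat coordinates $y$, and
\begin{equation*}
|\nabla^S A_\theta|^2=\frac{|y-y_0|^2}{r^2}=\frac2r\,(A_\theta-m)=2(V-\bar\nabla_\nu V)(A_\theta-m).
\end{equation*}
The desired inequality is therefore equivalent to $m=\min_S A_\theta\ge0$.

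The main obstacle is proving $m\ge0$. Writing $x_{n+1}A_\theta=1-\cos\theta\,(|x|^2-\langle x,c\rangle)/r$ on $S$, this requires controlling the radius and center of the supporting horosphere. For $\theta=\pi/2$ it is immediate, since then $A_\theta=V>0$. For $\theta<\pi/2$ I would proceed as follows.
\begin{itemize}
\item Use the rolling-horoball conclusion from the proof of Lemma \ref{lemma-horo-umbilical-cap}: the horoball bounded by $S$ contains $\widehat\Sigma$, and $E_{n+1}\in\operatorname{int}_{\mathcal H}(\widehat{\partial\Sigma})$. This gives $|c'|\le r\sin\theta$-type bounds and $r\le 1/(1+\cos\theta)$-type bounds, as in that lemma.
\item Plug these bounds into the explicit quadratic form of $x_{n+1}A_\theta$ on $S$ to show the minimum is nonnegative.
\end{itemize}
If this direct route fails, I would fall back on the boundary-value idea of Lemma \ref{horo-bate}. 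There the relevant quantity, $V-\bar\nabla_\nu V$, is nonincreasing along gradient curves toward the boundary, where its value is explicit. The analogue here would be a monotone quantity for $A_\theta-\frac r2|\nabla A_\theta|^2$ along such curves, combined with the lower bound $A_\theta\ge\varepsilon_2$ from Corollary \ref{horo-bounded}.
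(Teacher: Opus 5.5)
Your reduction is correct, and it is the paper's computation in different form. Since $\bar\nabla_e\nu=e$ for every tangent vector $e$ of the tangent horosphere $S$, the paper's bound $\langle L,\tau\rangle^2\le|L|^2-\langle L,\nu_\delta\rangle^2$ is exactly your bound $|\bar\nabla_{e_1}A_\theta|\le|\nabla^S A_\theta|$. Likewise, the paper's lower bound $\rho^{-2}(1-2\rho\cos\theta-\cos^2\theta|q'|^2)$ is exactly your $2m/r$. Concretely, write $x=c+r\omega$ with $\omega=(2z,1-|z|^2)/(1+|z|^2)$, i.e.\ stereographic projection from the point where $S$ touches $\{x_{n+1}=0\}$. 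Then $z$ is a flat isometric coordinate on $S$, and
\[
A_\theta|_S=\frac{1}{2r}\,|z-\cos\theta\, c'|^2+\frac{1-2r\cos\theta-\cos^2\theta\,|c'|^2}{2r}.
\]
So $m\ge0$ is equivalent to $\cos^2\theta|c'|^2\le 1-2r\cos\theta$.

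The gap is in how you propose to prove this last inequality. The rolling-horoball argument of Lemma \ref{lemma-horo-umbilical-cap} gives only $\widehat\Sigma\subseteq\overline{\mathbb B_r(c)}$. Hence $E_{n+1}\in\overline{\mathbb B_r(c)}$, that is, $|c'|^2\le 2r-1$. It gives no upper bound on $r$ at an interior point. In that lemma the value $\rho=1/(1+\cos\theta)$ appears only because $p$ is a boundary point, where the contact angle fixes $\nu$.

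An upper bound on $r$ is indispensable. For $\theta<\pi/2$ and $r>1/(2\cos\theta)$ the quantity $1-2r\cos\theta-\cos^2\theta|c'|^2$ is negative. Your bound $|c'|\le r\sin\theta$ also follows from $|c'|^2\le 2r-1$ only once $r\le 1/(1+\cos\theta)$ is known.

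The missing input is Lemma \ref{horo-bate}. You computed $V-\bar\nabla_\nu V=1/r$, and this quantity depends only on $(x,\nu)$, so it agrees on $\Sigma$ and $S$ at $p$. That lemma therefore says precisely $r\le 1/(1+\cos\theta)$. This is a genuinely global fact: it is proved by following gradient curves of $V$ to $\partial\Sigma$ and using $h\ge g$ along the way. Your fallback, a monotone quantity for $A_\theta-\frac r2|\nabla A_\theta|^2$, is aimed at the wrong object.

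With both bounds you get $1-2r\cos\theta-\cos^2\theta|c'|^2\ge 1+\cos^2\theta-2r\cos\theta(1+\cos\theta)\ge(1-\cos\theta)^2\ge0$. This closes your argument exactly as in the paper.
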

\begin{proof}
Since $e_{1}$ is a principal direction with curvature $1$. Thus $\bar{\nabla}_{e_{1}}\nu=h_{1i}e_{i}=e_{1}$. By calculating directly, we have
\begin{equation*}%\label{sxsxcd}
  \bar{\nabla}_{e_{1}}A_{\theta}=\bar{\nabla}_{e_{1}}(V-\cos\theta \bar{g}(x,\nu))=-\langle L,\tau\rangle,
\end{equation*}
where $L\coloneqq A_{\theta}E_{n+1}+\cos\theta(V-\bar{\nabla}_{\nu}V)x$ and $\tau=\frac{e_{1}}{x_{n+1}}$.

From $|\tau|^{2}=1$ and $\langle\tau,\nu_{\delta}\rangle=\bar{g}(e_{1},\nu)=0$, we see that
\begin{equation}\label{sxscdf}
  |\bar{\nabla}_{e_{1}}A_{\theta}|^{2}=\langle L,\tau\rangle^{2}\leq|L|^{2}-\langle L,\nu_{\delta}\rangle^{2}.
\end{equation}
Since
\begin{equation}\label{dccojhv}
\langle L,\nu_{\delta}\rangle=(V-\cos\theta \bar{g}(x,\nu))\langle E_{n+1},\nu_{\delta}\rangle+\cos\theta(V-\bar{\nabla}_{\nu}V)\langle x,\nu_{\delta}\rangle=(V-\bar{\nabla}_{\nu}V)
-A_{\theta}.
\end{equation}

At the point $x\in\Sigma$ under consideration, let
$\overline{\mathbb B_\rho(q)}$ be the supporting horoball tangent to $\Sigma$
at $x$ and containing $\widehat\Sigma$, by the supporting-horoball
argument in Lemma \ref{lemma-horo-umbilical-cap}. Here
$q=(q',\rho)=x-\rho\nu_\delta(x)$, so
\begin{equation*}%\label{uuuwwpa}
\langle\nu_\delta(x),E_{n+1}\rangle
=\frac{x_{n+1}}{\rho}-1.
\end{equation*}
Consequently,
\begin{equation*}%\label{dfcvmj}
\rho=\frac{x_{n+1}}{1+\langle\nu_\delta(x),E_{n+1}\rangle}
=\frac{1}{V-\bar\nabla_\nu V},
\end{equation*}
and
\begin{equation*}%\label{sxghnk}
  |q'|^{2}+\rho^{2}=|q|^{2}=|x-\rho\nu_{\delta}|^{2}=|x|^{2}-2\rho\langle x,\nu_{\delta}\rangle+\rho^{2},
\end{equation*}
which imply
\begin{equation}\label{sjxindsd}
  |x|^{2}=|q'|^{2}+2\rho\langle x,\nu_{\delta}\rangle.
\end{equation}
Combining \eqref{sxscdf}-\eqref{dccojhv} and \eqref{sjxindsd}, we have
\begin{eqnarray}\label{jxnsjdwyd}
2A_{\theta}(V-\bar{\nabla}_{\nu}V)-|\bar{\nabla}_{e_{1}} A_{\theta}|^2&\geq&2A_{\theta}(V-\bar{\nabla}_{\nu}V)-|L|^{2}+[(V-\bar{\nabla}_{\nu}V)
-A_{\theta}]^{2}\nonumber\\
&=&A^{2}_{\theta}+(V-\bar{\nabla}_{\nu}V)^{2}-|L|^{2}\nonumber\\
&=&A^{2}_{\theta}+\rho^{-2}-|A_{\theta}E_{n+1}+\cos\theta\rho^{-1}x|^{2}\nonumber\\
&=&\rho^{-2}(1-2\cos\theta\rho-\cos^{2}\theta|q'|^{2}),
\end{eqnarray}
where the last equality we used \eqref{sjxindsd}.
Since $E_{n+1}\in\operatorname{int}_{\mathcal H}(\widehat{\partial\Sigma}) \subseteq \overline{\mathbb{B}_{\rho}(q)}$, then 
\begin{equation}\label{xsccs}
  |q-E_{n+1}|^{2}=|q'|^{2}+(1-\rho)^{2}\leq\rho^{2}.
\end{equation}
Putting \eqref{xsccs} into \eqref{jxnsjdwyd}, we get
\begin{equation*}%\label{xsdsccxsxs}
 2A_{\theta}(V-\bar{\nabla}_{\nu}V)-|\bar{\nabla}_{e_{1}} A_{\theta}|^2\geq\rho^{-2}(1-2\cos\theta\rho-\cos^{2}\theta(2\rho-1))\geq\rho^{-2}(1-\cos\theta)^{2}>0,
\end{equation*}
where we used the fact $\cos\theta\geq0$ and $\rho=\frac{1}{V-\bar{\nabla}_{\nu}V}\leq\frac{1}{1+\cos\theta}$ from Lemma \ref{horo-bate}.
\end{proof}
Now we apply Theorem~\ref{thm:tensor-m.p.} to prove that flow \eqref{equationflow} preserves the horo-convexity.
\begin{theorem}\label{thm:horo-convex-preserving}
For $\theta\in(0,\pi/2]$. If the initial hypersurface $\Sigma_0$ is  horo-convex, then along the flow \eqref{equationflow}, the evolving hypersurface $\Sigma_t$ is horo-convex for $t>0$.
\end{theorem}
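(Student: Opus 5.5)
We apply Theorem \ref{thm:tensor-m.p.} to the tensor $S_{ij}\coloneqq h_{ij}-g_{ij}$, or more conveniently to the mixed tensor $S^i_j=h^i_j-\delta^i_j$. The operator is the one in $\mathcal L$, namely $a^{ij}=A_\theta F^{-2}F^{ij}$ and $b=X+\cos\theta F^{-1}x+\mathcal T$. The argument has three parts: derive the evolution equation of $h^i_j$, check the interior null-vector condition, and check the boundary condition.

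\textbf{Evolution equation.} Start from Proposition \ref{formula111}(5) with $f=A_\theta F^{-1}-\bar g(X,\nu)$. The Hessian $\nabla^i\nabla_j f$ is expanded as in Proposition \ref{Festimate}. This uses \eqref{F113} for $A_{\theta;ij}$, \eqref{F119} and \eqref{F119-X} for the Hessians of $\bar g(x,\nu)$ and $\bar g(X,\nu)$, and the commutation formula \eqref{Fij2} to convert $F_{;ij}=F^{kl}h_{kl;ij}+F^{kl,rs}h_{kl;i}h_{rs;j}$ into $F^{kl}h_{ij;kl}$ plus curvature terms. The result has the form
\[
\mathcal L h^i_j=A_\theta F^{-2}F^{kl,rs}h_{kl;}{}^{i}h_{rs;j}-2A_\theta F^{-3}F^{;i}F_{;j}+2F^{-2}\bigl(A_\theta^{;i}F_{;j}\bigr)^{\rm sym}+N^i_j,
\]
where $N^i_j$ is a zeroth-order term in $h$, $A_\theta$, $V$ and $\bar g(E_{n+1},\nu)$.

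\textbf{Interior null-vector condition.} At a point where $S\ge0$ and $S(e_1,e_1)=0$, the direction $e_1$ is principal with $\kappa_1=1$. We evaluate $N_{11}$ and find it is nonnegative: the terms collect into combinations like $(V-\bar\nabla_\nu V)(\kappa_i-1)$ with positive weights, using $\bar g(E_{n+1},\nu)\ge0$ from Corollary \ref{horo-bounded}. The gradient terms are then handled by the standard Andrews argument. Taking the supremum over $\Lambda$ in \eqref{tensor MP 1} produces the term $2\sum_{p>1}a^{kk}(\kappa_p-1)^{-1}(h_{1p;k})^2$. Combined with the concavity term $F^{kl,rs}h_{kl;1}h_{rs;1}$, Proposition \ref{lem:shifted-inverse} with $B_{ij}=h_{ij;1}$ gives the lower bound $\frac{2}{F-1}(F_{;1})^2$ (since $F>1$ under horo-convexity). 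What remains is the quadratic expression
\[
\frac{2A_\theta}{F^2(F-1)}F_{;1}^2-\frac{2A_\theta}{F^3}F_{;1}^2+\frac{4}{F^2}A_{\theta;1}F_{;1}+c\,(V-\bar\nabla_\nu V)\frac{F-1}{F^2}\cdots,
\]
where the last term is the part of $N_{11}$ proportional to $F-1$, coming from the factor $(\kappa_1-1)$ structure. We then require this quadratic form in $F_{;1}$ to be nonnegative. Its discriminant condition reduces precisely to
\[
2A_\theta(V-\bar\nabla_\nu V)\ge|\bar\nabla_{e_1}A_\theta|^2,
\]
which is Lemma \ref{lem:geometric-gradient}. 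The inequality $V-\bar\nabla_\nu V\ge1+\cos\theta$ from Lemma \ref{horo-bate} controls the remaining zeroth-order terms.

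\textbf{Boundary condition.} If the null vector $\xi$ is tangent to $\partial\Sigma_t$, say $\xi=e_\alpha$, then Proposition \ref{formula11}(4) gives
\[
\nabla_\mu S_{\alpha\alpha}=h_{\alpha\alpha;\mu}=\tilde h_{\alpha\alpha}(h_{\mu\mu}-1)\ge0,
\]
because $h_{\alpha\alpha}=1$, $h_{\mu\mu}\ge1$ and $\tilde h\ge0$ by Corollary \ref{cor222}. If instead $\xi=\mu$, we use $\nabla_\mu F=0$ from \eqref{boundaryF} as in \eqref{Fmumu}:
\[
F^{\mu\mu}h_{\mu\mu;\mu}=-\sum_\alpha F^{\alpha\alpha}\tilde h_{\alpha\alpha}(1-h_{\alpha\alpha})\ge0.
\]
For a general null vector, the null space is spanned by principal directions, since $\mu$ is principal by Proposition \ref{formula11}(1), so these two cases suffice.

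The main obstacle is the interior gradient estimate. The exact coefficients of $A_{\theta;1}F_{;1}$ and of the zeroth-order term must match the constant in Lemma \ref{lem:geometric-gradient}. I would first verify this by computing the zeroth-order term carefully at $\kappa_1=1$.
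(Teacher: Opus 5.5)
Your route is the paper's: Theorem~\ref{thm:tensor-m.p.} applied to $S=h-g$, the boundary check via Proposition~\ref{formula11}(4) and \eqref{Fmumu}, and the interior check via the supremum over $\Lambda$, Proposition~\ref{lem:shifted-inverse}, and a completed square closed by Lemma~\ref{lem:geometric-gradient}. However, the step you flag yourself does not close as written. Modulo tangential terms, $\partial_t S_{ij}=-f_{;ij}+f(S^2)_{ij}$. The mixed term therefore comes from $-(A_\theta/F)_{;11}$ and equals $+2F^{-2}A_{\theta;1}F_{;1}$, not $4F^{-2}A_{\theta;1}F_{;1}$. At the null vector, the zeroth-order part of $N_{11}$ is exactly
\[
(V-\bar\nabla_\nu V)\bigl(1-F^{-1}\bigr)+A_\theta F^{-2}\sum_k\dot F^k(\kappa_k-1)^2 ,
\]
where the second piece comes from commuting $F^{kl}h_{kl;11}$ into $F^{kl}h_{11;kl}$ at $\kappa_1=1$. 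With these coefficients, the quadratic $2A_\theta F^{-3}(F-1)^{-1}F_{;1}^2+2F^{-2}A_{\theta;1}F_{;1}+(V-\bar\nabla_\nu V)(1-F^{-1})$ is nonnegative for every value of $F_{;1}$ precisely when $|A_{\theta;1}|^2\le 2A_\theta(V-\bar\nabla_\nu V)$, which is Lemma~\ref{lem:geometric-gradient}. With your coefficient $4$ you would need $2|A_{\theta;1}|^2\le A_\theta(V-\bar\nabla_\nu V)$, which is four times stronger and not provided by the lemma. Also, $V-\bar\nabla_\nu V\ge1+\cos\theta$ is not used on the remaining zeroth-order term, which is nonnegative by itself. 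It enters only inside the proof of Lemma~\ref{lem:geometric-gradient}, to bound the radius of the supporting horoball.

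Three smaller points are missing.
\begin{itemize}
\item Horo-convexity gives $F\ge1$, not $F>1$. At a null point with $F=1$, all $\kappa_i=1$, so $S=0$ and $\nabla h=0$ there, and every term vanishes; this case must be treated separately.
\item Proposition~\ref{lem:shifted-inverse} requires all $\kappa_i>1$, but $\kappa_1=1$ at the null point (and possibly other $\kappa_p=1$). You need to apply it at $\kappa+\varepsilon$ and let $\varepsilon\to0$. This is legitimate because $B_{kp}=h_{1p;k}=\nabla_kS_{1p}=0$ whenever $\kappa_p=1$: $S\ge0$ vanishes on those directions, and at boundary points one also uses the boundary identities below. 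The same fact is what makes your sum well defined, and it must run over $\kappa_p>1$, not $p>1$.
\item On the boundary, checking $\xi=e_\alpha$ and $\xi=\mu$ separately covers a general null vector only if the off-diagonal entries of $\nabla_\mu S$ on the null space vanish. They do. In a frame diagonalizing $h|_{T\partial\Sigma}$, $\tilde h$ is diagonal by Proposition~\ref{formula11}(3), so (4) gives $(\nabla_\mu S)_{\alpha\beta}=0$ for $\alpha\ne\beta$. When $\mu$ is null, $(\nabla_\mu S)_{\alpha\mu}=(\nabla_\alpha S)_{\mu\mu}=0$ by Codazzi, since $S(\mu,\mu)$ attains a tangential minimum $0$ there.
\end{itemize}
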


\begin{proof}
Let $S_{ij}\coloneqq h_{ij}-g_{ij}$. Horo-convexity is equivalent to the nonnegativity of $S_{ij}$.

We first claim that
\begin{equation*}%\label{bry-eigenvalue}
\left(\nabla_{\mu} S_{i j}\right) v^{i} v^{j}(x, t) \geq 0,
\end{equation*}
whenever $S\geq0$ at a boundary point and $v$ is a null vector of $S$. One can choose an orthonormal frame $\{e_1,\ldots,e_{n-1},\mu\}$ with principal directions.  
%By choosing an orthonormal frame with principal directions, we may assume that $g_{i j}=\delta_{i j}$ and $h_{i j}=\lambda_{i}\delta_{i j}$ is diagonal at $p_{0} \in \mathbb{B}^{n}$, thus
If $\{e_{\alpha}\}_{\alpha=1}^{n-1}$ is a null direction, then Proposition \ref{formula11} (4) and Corollary \ref{cor222} give
\begin{equation*}%\label{ealp}
\nabla_{\mu} S_{\alpha \alpha}= h_{\alpha\alpha;\mu}=\sum_{\gamma}\tilde{h}_{\alpha\gamma}(h_{\mu\mu}-h_{\alpha\alpha})\delta_{\alpha\gamma}\geq 0.
\end{equation*}
If $\mu$ is a null direction, then Proposition \ref{formula11} (4) and
\eqref{Fmumu} give
\begin{equation*}%\label{ealp2}
\nabla_{\mu} S_{\mu\mu} =h_{\mu\mu;\mu}=-\sum_{\alpha=1}^{n-1}\frac{F^{\alpha\alpha}}{F^{\mu\mu}}h_{\alpha\alpha;\mu}=-\sum_{\alpha=1}^{n-1}\frac{F^{\alpha\alpha}}{F^{\mu\mu}} \tilde h_{\alpha\alpha} (h_{\mu\mu}-h_{\alpha\alpha})\geq0.
\end{equation*}
Proposition \ref{formula11} (3)--(4) also gives $(\nabla_\mu S)_{\alpha\beta}=0$ for $\alpha\ne\beta$ in this frame. Moreover, if $\mu$ is a null direction, then $S(\mu,\mu)$ has a tangential minimum of zero and $(\nabla_\mu S)_{\alpha\mu}=(\nabla_\alpha S)_{\mu\mu}=0$ by Codazzi formula. Consequently, the claim is true.

%the boundary inequality holds for every null vector of $S$.

Next we consider the evolution equation of $S_{ij}$. From Proposition \ref{formula111} $(1)$ and $(4)$, we see
\begin{eqnarray}\label{eq06}
(\partial_t-\mathfrak{L}_{\mathcal{T}}) S_{i j}&=&-f_{;ij}+f((h^{2})_{ij}+\bar{g}_{ij})-2fh_{ij}\nonumber\\
&=&-\left(\frac{A_{\theta}}{F}-\bar{g}(X,\nu)\right)_{;ij}+f((h^{2})_{ij}+\bar{g}_{ij}-2h_{ij}),
\end{eqnarray}
where $$\mathfrak{L}_{\mathcal{T}}S_{ij}\coloneqq \nabla_{\mathcal{T}}S_{ij}+S_{kj}\nabla_{i}\mathcal{T}^{k}+S_{ki}\nabla_{j}\mathcal{T}^{k}.$$ By calculating directly, we get
\begin{eqnarray}\label{eq07}
\left(\frac{A_{\theta}}{F}\right)_{;ij}&=&\frac{2A_{\theta}}{F^{3}}F_{;i}F_{;j}-\frac{A_{\theta}}{F^{2}}(F^{kl;rs}h_{kl;i}h_{rs;j}+F^{kl}h_{kl;ij})\nonumber\\
&&-\frac{1}{F^{2}}(F_{;i}A_{\theta;j}+F_{;j}A_{\theta;i})+\frac{ \nabla_{ij}A_{\theta}}{F},
\end{eqnarray}
and
\begin{equation*}
(S^{2})_{ij}\coloneqq \bar{g}^{kl}S_{ik}S_{lj}=(h^{2})_{ij}+\bar{g}_{ij}-2h_{ij}.
\end{equation*}
By using \eqref{Fij2} we imply that
\begin{eqnarray}\label{eq08}
F^{kl}h_{kl;ij}&=&F^{kl}h_{ij;kl}+F^{kl}(h^{2})_{kl}h_{ij}-F^{kl}h_{kl}(h^{2})_{ij}-F^{kl}h_{kl}\bar{g}_{ij}+F^{kl}\bar{g}_{kl}h_{ij}\nonumber\\
&=&F^{kl}h_{ij;kl}+F^{kl}(h^{2})_{kl}h_{ij}-F(h^{2})_{ij}-F\bar{g}_{ij}+\mathcal{F}h_{ij},
\end{eqnarray}
where we used \eqref{F-110} and the fact $F^{i k}h_{kj}=F^{j k}h_{ki}$.

Putting \eqref{eq07}-\eqref{eq08} into \eqref{eq06}, and using \eqref{F113},\eqref{F119},\eqref{F119-X}, we see
\begin{eqnarray*}%\label{eq1009}
\partial_t S_{i j}
&=&A_{\theta}F^{-2}F^{kl}S_{ij;kl}+\bar{g}(X+\cos\theta F^{-1}x,\nabla S_{ij})+A_{\theta}F^{-2}F^{kl;rs}h_{kl;i}h_{rs;j}-2A_{\theta}F^{-3}F_{;i}F_{;j}\nonumber\\
&&+F^{-2}(F_{;i}A_{\theta;j}+F_{;j}A_{\theta;i}) +(S_{ik}+\bar{g}_{ik})(\bar{g}(\bar{\nabla}_{e_j}X,e_k)+\cos\theta F^{-1}\bar{g}(\bar{\nabla}_{e_j}x,e_k))\nonumber\\&&+(S_{jk}+\bar{g}_{jk})(\bar{g}(\bar{\nabla}_{e_i}X,e_k)+\cos\theta F^{-1}\bar{g}(\bar{\nabla}_{e_i}x,e_k))\nonumber\\
&&-((S^{2})_{ij}+2S_{ij}+\bar{g}_{ij})(\bar{g}(X,\nu)+VF^{-1})+\bar{g}_{ij}(\bar{g}(X,\nu)-2A_{\theta}F^{-1}-\bar{\nabla}_{\nu}V)\nonumber\\
&&+(S_{ij}+\bar{g}_{ij})(A_{\theta}F^{-2}F^{kl}(h^{2})_{kl}+A_{\theta}F^{-2}\mathcal{F}-V+\bar{\nabla}_{\nu}V F^{-1})+f(S^{2})_{ij}+\mathfrak{L}_{\mathcal{T}} S_{i j}.\nonumber\\
&\coloneqq &a^{kl}\nabla_l\nabla_k S_{ij}+ b^k\nabla_k S_{ij}+N_{ij}.\nonumber
\end{eqnarray*}

To use the tensor maximum principle, we next show that \eqref{tensor MP 1} holds when $S_{i j} \geq 0$ and $S_{i j} v^{j}=0$, where $v$ is a null vector of $S$. we take an orthonormal frame in which $h_{ij}$ is diagonal such that $1=\kappa_1\leq\kappa_2\leq\cdots\leq\kappa_n$
and the null vector is $e_1$. Hence, the terms containing $S_{ij}$ and $(S^2)_{ij}$ vanish on the null vector. Let $(x_0,t_0)$ be the point under consideration. 
%The terms containing $S_{ij}$ or $(S^2)_{ij}$ vanish when evaluated on this null vector. 
%Let $\left(x_{0}, t_{0}\right)$ be the point where $S_{i j}$ has a null vector $v$.  By continuity, we can assume that the principal curvatures are mutually distinct and in increasing order at $\left(x_{0}, t_{0}\right)$, that is $\kappa_{1}<\kappa_{2}<\cdots<\kappa_{n}$. The null vector condition $S_{i j} v^{j}=0$ implies that $v=e_{1}$ and $S_{11}=\kappa_{1}-1=0$ at $\left(x_{0}, t_{0}\right)$. The terms involving $S_{i j}$ and $\left(S^{2}\right)_{i j}$ satisfy the null vector condition and can be ignored. 
It remains to show that
\begin{eqnarray*}%\label{Q}
Q&\coloneqq &A_{\theta}F^{-2}F^{kl;rs}h_{kl;1}h_{rs;1}-2A_{\theta}F^{-3}|\nabla_{1}F|^{2}+2F^{-2}\nabla_{1}F\nabla_{1}A_{\theta}\nonumber\\
&&+A_{\theta}F^{-2}(F^{kl}((h^{2})_{kl}+\bar{g}_{kl})-2F)+(V-\bar{\nabla}_{\nu}V)(1-F^{-1})\nonumber\\
&&+\sup_{\Lambda_{k}^{p}}2A_{\theta}F^{-2}F^{kl}(2\Lambda^{p}_{k}\nabla_{l}S_{1p}-\Lambda_{k}^{p}\Lambda^{q}_{l}S_{pq})\geq0,
\end{eqnarray*}
at $(x_0,t_0)$. If $F=1$, then $h=g$ and $\nabla h=0$
at this point, so $Q=0$. Next we assume that $F>1$.
For every $p$ with $\kappa_p=1$, the non-negativity of $S$
and the boundary condition imply $\nabla_kS_{1p}=0$. Therefore, we have
%at $\left(x_{0}, t_{0}\right)$ for all matrix $\left(\Lambda_{k}^{p}\right)$. By assumption, $S_{11}=0$ and $\nabla_{k} S_{11}=0$ at $\left(x_{0}, t_{0}\right)$. We have 
$$
\begin{aligned}
{F}^{k l}\left(2 \Lambda_{k}^{p} \nabla_{l} S_{1 p}-\Lambda_{k}^{p} \Lambda_{l}^{q} S_{p q}\right) &=\sum_{k=1}^{n} \sum_{\kappa_p>1} \dot{F}^{k}\left(2 \Lambda_{k}^{p} \nabla_{k} S_{1 p}-\left(\Lambda_{k}^{p}\right)^{2} S_{p p}\right) \\
&=\sum_{k=1}^{n} \sum_{\kappa_p>1}\dot{F}^{k}\left[\frac{\left(\nabla_{k} S_{1 p}\right)^{2}}{S_{p p}}-\left(\Lambda_{k}^{p}-\frac{\nabla_{k} S_{1 p}}{S_{p p}}\right)^{2} S_{p p}\right].
\end{aligned}
$$ The supremum is attained by taking
$\Lambda_k^p=\nabla_kS_{1p}/S_{pp}$ for $\kappa_p>1$
and $\Lambda_k^p=0$ for $\kappa_p=1$.  %Then the supremum of the last line in \eqref{Q} is obtained by choosing the matrix $\Lambda_{k}^{p}=$ $\frac{\nabla_{k} S_{1 p}}{S_{p p}}$ for $p \geq 2, k \geq 1$ and $\Lambda_{k}^{1}=0$ for all $k$. 
It yields that
\begin{eqnarray} 
 Q&=&A_{\theta}F^{-2}\left({F}^{k l; r s}  h_{k l;1} h_{r s;1}+2\sum_{k=1}^{n} \sum_{\kappa_p>1}  \dot{F}^{k} \frac{\left(\nabla_{k} S_{1 p}\right)^{2}}{S_{p p}}\right) \notag \\
&&-2A_{\theta}{F^{-3}}\left|\nabla_{1}F\right|^{2}+{2}{F^{-2}} \nabla_{1} A_{\theta} \nabla_{1} F\nonumber\\
&&+A_{\theta}F^{-2}\left(F^{kl}((h^{2})_{kl}+\bar{g}_{kl})-2F\right)+\left(V-\bar{\nabla}_{\nu}V\right)(1-F^{-1}).\label{eq1118}
\end{eqnarray}
By Codazzi formula, $B_{ip}\coloneqq\nabla_1h_{ip}=\nabla_iS_{1p}$,
which vanishes if $\kappa_p=1$.
Applying Proposition \ref{lem:shifted-inverse} at
$(\kappa_1+\varepsilon,\ldots,\kappa_n+\varepsilon)$
with this fixed $B$ and let $\varepsilon\to0$.
Since $F>1$, this means 
%By Gauss-Codazzi equation we have $\nabla_{k} S_{1 p}=\nabla_{1} h_{k p}$. Therefore, we choose $B_{ij}=\nabla_{1} h_{ij}$ for Lemma \ref{lem:shifted-inverse} to get
\begin{equation}\label{eq:inverse-matrix-oiuhb}
 F^{kl;rs}{h_{kl;1}}h_{rs;1}
 +2\sum_{k=1}^{n} \sum_{\kappa_p>1}  \dot{F}^{k} \frac{\left(\nabla_{k} S_{1 p}\right)^{2}}{S_{p p}}
 \ge\frac{2}{F-1}\left(\sum_i\dot{F}^i\nabla_{1}h_{ii}\right)^2=\frac{2}{F-1}|\nabla_{1}F|^{2}.
\end{equation}
Inserting \eqref{eq:inverse-matrix-oiuhb} into \eqref{eq1118} we have
\begin{eqnarray*}
 Q&\geq&2A_{\theta}{F^{-3}}(F-1)^{-1}\left|\nabla_{1}F\right|^{2}+{2}{F^{-2}} \nabla_{1} A_{\theta} \nabla_{1} F\nonumber\\
&&+A_{\theta}F^{-2}\left(F^{kl} \left((h^{2})_{kl}+\bar{g}_{kl} \right)-2F\right)+\left(V-\bar{\nabla}_{\nu}V\right)(1-F^{-1})\nonumber\\
&=&2A_{\theta}{F^{-3}}(F-1)^{-1}\left|\nabla_{1}F+(2A_{\theta})^{-1}F(F-1)\nabla_{1}A_{\theta}\right|^{2}\nonumber\\
&&+A_{\theta}F^{-2}\left(F^{kl} \left((h^{2})_{kl}+\bar{g}_{kl} \right)-2F\right)+\left(V-\bar{\nabla}_{\nu}V-(2A_{\theta})^{-1}|\nabla_{1}A_{\theta}|^{2}\right)(1-F^{-1}).\nonumber
\end{eqnarray*}
By directly calculating, we see that ${F}^{k l}\left(\left(h^{2}\right)_{k l}+\bar{g}_{k l}\right)-2 F=\sum_{k=1}^{n} \dot{F}^{k}\left(\kappa_{k}-1\right)^{2} \geq 0$.
From Lemma \ref{lem:geometric-gradient}, we conclude that $Q\geq0$. Then $S_{i j}\geq0$ for any $t>0$ from the tensor maximum principle (Theorem \ref{thm:tensor-m.p.}). Therefore, the horo-convexity is preserved along $\Sigma_{t}$. 
\end{proof}

We now turn to strictly convex initial data without the
horo-convexity assumption. For $F=H_n/H_{n-1}$, the identity in \eqref{F-111} removes the last term in \eqref{F88}, and its remaining zero-order term is nonnegative. This gives the following lower bound for $F$.

\begin{prop}  Along flow \eqref{equationflow} with a strictly convex initial hypersurface and $F=\frac{H_n}{H_{n-1}}$, then the hypersurfaces $\Sigma_t$ satisfy 
\begin{equation*}%\label{C0-est}
 F(p, t) \geq  \min _{\overline{\mathbb{B}}^{n}} F(\cdot, 0), \quad \forall(p, t) \in \overline{\mathbb{B}}^{n} \times\left[0, T^{*}\right) .
\end{equation*}
\end{prop}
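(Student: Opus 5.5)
The plan is to apply the parabolic maximum principle to the scalar evolution equation \eqref{F88} for $F$, using the Neumann condition \eqref{boundaryF} to exclude a boundary minimum.

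First I rewrite the zero-order terms of \eqref{F88} for $k=n$. For $F=H_n/H_{n-1}=\sigma_n/(n\sigma_{n-1})$ in diagonal form we have $\dot F^i=F^2/\kappa_i^2$. Hence
\begin{equation*}
F^{ij}(h^2)_{ij}=\sum_i\dot F^i\kappa_i^2=nF^2\cdot\frac1n=F^2 .
\end{equation*}
This is the case $k=n$ of \eqref{F-111}, where the upper and lower bounds coincide. So the term $VF^{-1}(F^2-F^{ij}(h^2)_{ij})$ vanishes identically. The remaining zero-order term is $(\mathcal F-1)\bar g(E_{n+1},\nu)$. Here $\mathcal F-1\ge0$ by \eqref{F-110}, and $\bar g(E_{n+1},\nu)\ge0$ by \eqref{E-yghs}, which holds on $[0,T^*)$ thanks to the barrier estimate \eqref{C0} and $\theta\in(0,\pi/2]$. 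Therefore
\begin{equation*}
\mathcal L F\ \ge\ 2F^{-2}F^{ij}A_{\theta;i}F_{;j}-2F^{-3}A_\theta F^{ij}F_{;i}F_{;j}.
\end{equation*}
Both terms on the right are linear in $\nabla F$ with bounded coefficients wherever $F>0$, so they can be absorbed into the drift part of the operator.

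Next I run the minimum principle. Fix $T'<T^*$ and let $m(t)=\min_{\overline{\mathbb B}^n}F(\cdot,t)$. Suppose a new minimum below $\min F(\cdot,0)$ is first reached at $(p_0,t_0)$ with $t_0>0$. To get strictness in the standard way, I would instead compare against $F+\varepsilon t$, or argue via the Hopf lemma.

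If $p_0$ is an interior point, then $\nabla F=0$ and $F^{ij}\nabla^2_{ij}F\ge0$, since $F^{ij}>0$ by strict convexity on $[0,T^*)$. The tangential term $\bar g(\mathcal T,\nabla F)$ also vanishes, so $\partial_tF\ge0$ at that point, which contradicts the point being a first strict new minimum.

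If $p_0\in\partial\mathbb B^n$, the condition \eqref{boundaryF} gives $\nabla_\mu F=0$. A minimum on the boundary means the tangential gradient vanishes as well, so $\nabla F=0$ there too. The Hessian inequality in the directions tangent to $\Sigma_t$ still holds in the form $F^{ij}\nabla_{ij}F\ge0$, because the normal derivative vanishes. More robustly, the Hopf boundary lemma for the uniformly parabolic operator would force $\nabla_\mu F<0$ at a strict boundary minimum, again a contradiction.

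The main subtlety is exactly this boundary case. I would handle it cleanly with the Hopf lemma, noting that the coefficients $A_\theta F^{-2}F^{ij}$ are smooth and uniformly positive on $[0,T']$ by \eqref{A-yghs} and strict convexity. Letting $\varepsilon\to0$ and then $T'\to T^*$ gives $F\ge\min F(\cdot,0)$ on $\overline{\mathbb B}^n\times[0,T^*)$.
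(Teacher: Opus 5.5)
Your proposal is correct and follows the paper's argument. For $k=n$ the identity $F^{ij}(h^2)_{ij}=F^2$ (the case $k=n$ of \eqref{F-111}) removes the last term of \eqref{F88}, the term $(\mathcal F-1)\bar g(E_{n+1},\nu)$ is nonnegative by \eqref{F-110} and \eqref{E-yghs}, and the maximum principle with the Neumann condition \eqref{boundaryF} concludes; your $F+\varepsilon t$ and boundary-case details only make explicit what the paper leaves implicit. One small normalization slip: $H_n/H_{n-1}=n\sigma_n/\sigma_{n-1}$, so $\dot F^i=F^2/(n\kappa_i^2)$ rather than $F^2/\kappa_i^2$, and it is this formula that actually gives $\sum_i\dot F^i\kappa_i^2=F^2$.
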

\begin{proof}From \eqref{E-yghs}, \eqref{F-111} and Proposition \ref{Festimate}, we know
\begin{equation*}%\label{C0-est2}
\mathcal{L}F \geq 0, \quad \bmod ~\nabla F.
\end{equation*}
Taking into account the boundary condition $\nabla_{\mu}F=0$ by \eqref{boundaryF}, the assertion follows from the maximum principle.
\end{proof}
As a direct consequence, we get the uniform curvature positive lower bound.% by Proposition \ref{Hk}.
\begin{corollary}\label{convex-prev}
Along flow \eqref{equationflow} with a strictly convex initial hypersurface and $F=\frac{H_n}{H_{n-1}}$, then the hypersurfaces $\Sigma_t$ are uniformly convex for $t\in[0,T^*)$, that is, there exists $c>0$ depending only on $\Sigma_0$ such that 
\begin{equation*}%\label{kappa}
  \min _{1\leq i\leq n} \kappa_{i }(p, t) \geq c, \quad \forall(p, t) \in \overline{\mathbb{B}}^{n} \times\left[0, T^{*}\right) .
\end{equation*}
\end{corollary}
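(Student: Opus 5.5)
The plan is to derive the lower principal curvature bound from the preceding proposition, which gives $F=H_n/H_{n-1}\geq c_0\coloneqq\min_{\overline{\mathbb B}^n}F(\cdot,0)>0$ on $[0,T^*)$. The only remaining step is to turn a lower bound on $F$ into a lower bound on each $\kappa_i$. Here the specific structure of $F$ helps. On $\Gamma_+$ we have
\begin{equation*}
\frac{H_n}{H_{n-1}}=\frac{n\,\sigma_n}{\sigma_{n-1}}=n\Bigl(\sum_{i=1}^n\frac{1}{\kappa_i}\Bigr)^{-1}.
\end{equation*}
So $F$ is, up to the factor $n$, the harmonic mean type expression $\bigl(\sum_i\kappa_i^{-1}\bigr)^{-1}$.

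The lower bound $F\geq c_0$ then reads $\sum_i\kappa_i^{-1}\leq n/c_0$. Each term is positive as long as $\Sigma_t$ is strictly convex, which holds on $[0,T^*)$ by the definition of $T^*$. Hence for each $i$ we get $\kappa_i^{-1}\leq n/c_0$, that is,
\begin{equation*}
\min_i\kappa_i\geq \frac{c_0}{n}\eqqcolon c.
\end{equation*}
This constant depends only on $\Sigma_0$, through $\min F(\cdot,0)$, which is positive because $\Sigma_0$ is strictly convex and compact.

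There is no serious obstacle. The only point to check is that strict convexity is available on the whole interval, so that the identity for $\sigma_n/\sigma_{n-1}$ and the positivity of the reciprocals apply. This holds by the definition of $T^*$, and the resulting bound is independent of $t$. In particular the hypersurfaces cannot lose strict convexity as $t\to T^*$, which is the uniform convexity claimed in the statement.
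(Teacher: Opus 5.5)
Your proposal is correct and follows essentially the same argument as the paper. Both combine the identity $F=n\bigl(\sum_i\kappa_i^{-1}\bigr)^{-1}\le n\min_i\kappa_i$ on the strictly convex range with the preceding lower bound $F\ge\min F(\cdot,0)$, which gives $\min_i\kappa_i\ge \min F(\cdot,0)/n$.
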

\begin{proof}
Since
\begin{equation*}
 F=\frac{H_n}{H_{n-1}}
 =\frac{n}{\sum_{i=1}^n\kappa_i^{-1}}
 \leq n\min_{1\leq i\leq n}\kappa_i,
\end{equation*}
the assertion follows from the preceding lower bound for $F$.
\end{proof}
Thus, in both settings, the principal curvatures remain positive
and $F$ has a uniform positive lower bound. We next prove upper
bounds for $F$ and $H$.

\subsection{Upper curvature bounds}\ 

We first bound $F$ from above by using the negative term $-|X|^2$ in the evolution equation \eqref{Phsi} for the function $\psi$.

\begin{prop}%\label{F-upper}
Along flow \eqref{equationflow}, there exists $C>0$ depending only on $\Sigma_{0}$ such that
\begin{equation*}%\label{Fbounded}
  F(p, t) \leq C, \quad \forall(p, t) \in \overline{\mathbb{B}}^{n} \times\left[0, T^{*}\right).
\end{equation*}
\end{prop}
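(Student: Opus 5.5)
We bound $F$ from above with an auxiliary function of the form
\begin{equation*}
 Z\coloneqq \log F-\alpha\,\psi
 \qquad\text{or}\qquad
 Z\coloneqq \frac{F}{\beta-\psi},
\end{equation*}
where $\psi=\frac12|X|^2$. By the barrier estimate \eqref{C0}, $\psi$ is uniformly bounded on $[0,T^*)$, so $\beta$ can be taken larger than $\sup\psi$, and $\alpha>0$ is a large constant to be chosen. The idea is that the term $-|X|^2$ in \eqref{Phsi}, together with the favourable sign coming from $F^{-1}$ factors, produces a negative zero-order term that dominates the growth terms in \eqref{F88} once $F$ is large. I would try the quotient form first, since it is the standard device for locally constrained flows.

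\textbf{Boundary.} First I would check that $Z$ cannot attain a new maximum on $\partial\Sigma_t$. By \eqref{boundaryF}, $\nabla_\mu F=0$, and by \eqref{boundarypsi}, $\nabla_\mu\psi=\cot\theta\,\bar g(X,\nu)\ge0$, using $\theta\in(0,\pi/2]$ and \eqref{X-yghs}. Hence $\nabla_\mu Z=\frac{F\nabla_\mu\psi}{(\beta-\psi)^2}\ge0$ for the quotient, with the opposite sign for $\log F-\alpha\psi$. Because the sign matters, I would choose between the two forms accordingly. With $\nabla_\mu Z\ge0$, a boundary maximum is not automatically excluded by the Hopf lemma. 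In that case I would replace $\psi$ by $-\psi$, i.e.\ use $F(\beta+\psi)$ or $\log F+\alpha\psi$, so that $\nabla_\mu Z\ge0$ holds at a maximum point where the Hopf lemma gives $\nabla_\mu Z>0$, provided we show $\mathcal L Z<0$ there. If $\cot\theta=0$ (free boundary) the boundary term vanishes. So I would settle the signs by aiming for $\nabla_\mu Z\le0$: use $Z=\log F-\alpha\psi$. Then $\nabla_\mu Z=-\alpha\cot\theta\,\bar g(X,\nu)\le0$, which rules out a boundary maximum unless it is also an interior-type critical point, and that case is handled by the interior computation anyway.

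\textbf{Interior.} At an interior maximum of $Z=\log F-\alpha\psi$ we have $\nabla F/F=\alpha\nabla\psi$. I would compute $\mathcal L\log F=F^{-1}\mathcal LF+A_\theta F^{-4}F^{ij}F_iF_j$. The gradient terms $2F^{-3}F^{ij}A_{\theta;i}F_{;j}$ and $-2A_\theta F^{-4}F^{ij}F_iF_j$ are handled by Cauchy--Schwarz and absorbed. They are $O(F^{-2}\mathcal F)$ after substituting $\nabla F=\alpha F\nabla\psi$, because $|\nabla A_\theta|$ and $|\nabla\psi|$ are bounded by the $C^0$ estimates.

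The zero-order terms of $F^{-1}\mathcal LF$ are
\begin{equation*}
 (\mathcal F-1)F^{-1}\bar g(E_{n+1},\nu)+V\bigl(1-F^{-2}F^{ij}(h^2)_{ij}\bigr)\le C\mathcal F F^{-1}+C,
\end{equation*}
using \eqref{F-111} for the second term. From \eqref{Phsi}, $-\alpha\mathcal L\psi$ contributes $+\alpha|X|^2$, which has the wrong sign for a maximum.

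This shows the true obstacle: the sign of the $|X|^2$ term forces the choice $Z=\log F+\alpha\psi$ at an interior maximum, so that $-|X|^2$ enters with a good sign. Then $\alpha\mathcal L\psi\le-\alpha|X|^2+C\alpha(\mathcal F+1)F^{-1}$, and $|X|^2\ge c>0$ since $\widehat{C_{\theta,R_1}}\subseteq\widehat\Sigma_t$. Altogether, at the maximum,
\begin{equation*}
 0\le\mathcal L Z\le C+C\mathcal F F^{-1}-\alpha c+C\alpha(1+\mathcal F)F^{-1}.
\end{equation*}
The remaining difficulty is the constant $C$ in the $V(1-\dots)$ term, which is not small. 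I would control it by using instead
\begin{equation*}
 Z=\log\Bigl(\frac{F}{A_\theta}\Bigr)+\alpha\psi,
\end{equation*}
or by exploiting that the $V$ term is bounded by $V$ while $-\alpha|X|^2$ gives a fixed gain $\alpha c$. Taking $\alpha$ large makes $-\alpha c$ beat the constant $C$, and for $F\ge F_0(\alpha)$ the $F^{-1}$ terms are small. Here $\mathcal F\le C$ is needed; it holds for $F=H_k/H_{k-1}$ in the uniformly convex or horo-convex setting, since $\mathcal F\le n$ by concavity and homogeneity. This contradiction gives $F\le C$.

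\textbf{Boundary sign, revisited.} With $Z=\log F+\alpha\psi$ we get $\nabla_\mu Z=\alpha\cot\theta\,\bar g(X,\nu)\ge0$, so the boundary case is the main obstacle. I would handle it by modifying the auxiliary term with the capillary-adapted function $\psi-\cot\theta\cdot(\text{something with }\nabla_\mu=\cot\theta\,\bar g(X,\nu))$. A natural candidate is $\cos\theta\,\bar g(x,\nu)$ or $V$-type corrections, chosen via \eqref{factor} so that the Neumann derivative vanishes or becomes nonpositive, while keeping the $-|X|^2$ gain up to bounded errors.
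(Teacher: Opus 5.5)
You have the right interior mechanism (the $-|X|^2$ term in \eqref{Phsi}), but there is a genuine gap at the boundary, and that is where the choice of auxiliary function actually matters. The function you settle on, $Z=\log F+\alpha\psi$, has $\nabla_\mu Z=\alpha\cot\theta\,\bar g(X,\nu)$ on $\partial\Sigma_t$. By \eqref{X-yghs} this is strictly positive for $\theta\in(0,\pi/2)$. A positive outward derivative is exactly what one expects at a boundary maximum, so nothing prevents the maximum of $Z$ from lying on $\partial\Sigma_t$. At such a point you do not have $\nabla Z=0$ and $\nabla^2Z\le0$, so the interior inequality cannot be applied. Your closing paragraph recognises this but leaves it unresolved. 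The candidate $\cos\theta\,\bar g(x,\nu)$ does not help: since $\nabla_\mu A_\theta=(\csc\theta+\cot\theta\,h_{\mu\mu})A_\theta$, its conormal derivative contains $h_{\mu\mu}$. That quantity is not controlled before the curvature bound is known, so no definite sign results.

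The missing idea is to subtract $V$ with the same coefficient as $\psi$, i.e.\ to use $\Phi=\log F-V+\psi$. Then \eqref{boundaryF}, \eqref{factor}, \eqref{boundarypsi} and $X+E_{n+1}=x$ give
\[
\nabla_\mu\Phi=-\csc\theta\,V+\cot\theta\bigl(\bar g(\nu,E_{n+1})+\bar g(X,\nu)\bigr)=-\csc\theta\,A_\theta<0,
\]
so a boundary maximum is impossible. In the interior the new term is harmless: by \eqref{V-eqa}, $-\mathcal LV\le C_1F^{-1}+C_2F^{-2}$, because its only zeroth-order term $-\bar g(X,E_{n+1})=-(x_{n+1}-1)x_{n+1}^{-2}$ is nonpositive.

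Your interior step also needs three corrections.
\begin{itemize}
\item $|\nabla A_\theta|$ is \emph{not} bounded by the $C^0$ estimates when $\theta<\pi/2$, since it involves $h$. In a principal frame one has $|A_{\theta;i}|\le C(1+\kappa_i)$. Combined with $\sum_i\dot F^i\kappa_i=F$ and $\mathcal F\le k$, this bounds the cross term by $C(F^{-1}+F^{-2})$.
\item The bound $\mathcal F\le k$ comes from the explicit formula for $H_k/H_{k-1}$, not from concavity, which only gives $\mathcal F\ge1$.
\item The term $V\bigl(1-F^{-2}F^{ij}(h^2)_{ij}\bigr)$ is $\le0$ by \eqref{F-111}. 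So there is no $O(1)$ constant to beat, and no large $\alpha$ is needed.
\end{itemize}
With these, at an interior maximum one gets $0\le\mathcal L\Phi\le-(1-\cos\theta)^2R_1^2+C_1F^{-1}+C_2F^{-2}$, which bounds $F$.
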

\begin{proof}
Recall that $\psi=\frac12|X|^2=\frac12\rho^2$.
From \eqref{C0}, we have 
\begin{equation}\label{eq:rho-bounds}
 (1-\cos\theta)R_1\leq\rho\leq R_2,
\end{equation}
and
\begin{equation*}%\label{eq:psi-derivative-bounds}
 |\psi_\nu|+|\bar\nabla^2\psi|\le C_0,
 \qquad
 |\nabla V|\le C_0,
 \qquad
\mathcal F\leq k,
\end{equation*}
where $0<R_1<R_2<\infty$ are two constants and \(C_0=C_0(n,R_{1},R_2)\). 
Indeed, $\mathcal F=1$ for $k=1$, while for $k\geq2$,
\begin{equation*}
 \mathcal F=k-\frac{k(n-k+2)}{n-k+1}
 \frac{\sigma_k\sigma_{k-2}}{\sigma_{k-1}^2}\leq k.
\end{equation*}

Consider the function
\begin{equation*}%\label{auxillary}
  \Phi\coloneqq \log F-V+\psi.
\end{equation*}
Using \eqref{factor},\eqref{boundarypsi},\eqref{boundaryF}, on $\partial \Sigma_{t}$,
\begin{equation*}%\label{logF}
 \nabla_{\mu} \Phi=F^{-1}\nabla_{\mu}F-\nabla_{\mu}V+\nabla_{\mu}\psi=-\csc\theta A_{\theta}<0.
 \end{equation*}
 Fix $T<T^*$. If the maximum of $\Phi$ on
$\overline{\mathbb B}^n\times[0,T]$ occurs at $t=0$,
the estimate follows from the initial data. Otherwise it occurs
at a positive-time interior spatial point $(p_0,t_0)$, where $p_{0} \in \mathbb{B}^{n}$ and $t_0>0$, then we have
\begin{equation}\label{eq90}
  \nabla \Phi\left(p_{0}\right)=0, \quad \nabla^{2} \Phi\left(p_{0}\right) \leq 0, \quad \partial_{t} \Phi\left(p_{0}\right) \geq 0.
\end{equation}
At this point, $F^{-1}F_{;i}=V_{;i}-\psi_{;i}$.
In a principal orthonormal frame,
$|V_{;i}-\psi_{;i}|\leq C$ and
$|A_{\theta;i}|\leq C(1+\kappa_i)$.
Using $\sum\limits_i\dot F^i\kappa_i=F$ and
\eqref{F88}, we obtain
\begin{eqnarray}
 \mathcal L(\log F)
 &=&2F^{-2}\sum_i\dot F^i A_{\theta;i}(V_{;i}-\psi_{;i})
 -A_\theta F^{-2}\sum_i\dot F^i(V_{;i}-\psi_{;i})^2\nonumber\\
 &&+F^{-1}(\mathcal F-1)\bar g(E_{n+1},\nu)
 +VF^{-2}(F^2-F^{ij}(h^2)_{ij})\nonumber\\
 &\leq&C_1F^{-1}+C_2F^{-2}.\label{logF1}
\end{eqnarray}
By \eqref{V-eqa} we deduce
\begin{eqnarray*}
-\mathcal{L}V&=&2A_{\theta}F^{-1}\bar{g}(\nu,E_{n+1})-\bar{g}(X,E_{n+1})+A_{\theta}VF^{-2}\mathcal{F}+\cos\theta F^{-1}\bar{g}(x,\nabla V)\\
&\leq&C_{1}F^{-1}+C_{2}F^{-2}.\nonumber
\end{eqnarray*}
From \eqref{Phsi} and \eqref{eq:rho-bounds} we see
\begin{eqnarray}
\mathcal{L}\psi&=&-|X|^{2}-F^{-2}[2x^{3}_{n+1}A_{\theta}F^{ij}\bar{g}(e_{i},E_{n+1})\bar{g}(e_{j},X)+x_{n+1}A_{\theta}\mathcal{F}] \notag \\
&&+F^{-1}[2A_{\theta}x^{2}_{n+1}\bar{g}(X,\nu)-\cos\theta x^{2}_{n+1}\bar{g}(X,x)+\cos\theta x^{2}_{n+1}\bar{g}(X,\nu)\bar{g}(x,\nu)]\nonumber\\
&\leq&-(1-\cos\theta)^2 R^{2}_{1}+C_{1}F^{-1}+C_{2}F^{-2}.\label{Phusg}
\end{eqnarray}
Combining \eqref{logF1}-\eqref{Phusg} with \eqref{eq90} we get
\begin{equation*}
  0\leq\mathcal{L}\Phi=\mathcal{L}(\log F)-\mathcal{L}V+\mathcal{L}\psi\leq-(1-\cos\theta)^2 R^{2}_{1}+C_{1}F^{-1}+C_{2}F^{-2}.\nonumber
\end{equation*}
%Thus we obtain the positive upper bound for $F$.
This bounds $F(p_0,t_0)$. Since $V$ and $\psi$ are uniformly
bounded, the maximality of $\Phi$ gives the same uniform upper
bound on the whole interval $[0,T]$. The constants are independent
of $T<T^*$.
\end{proof}

The upper and lower bounds for $F$ allow us to estimate $H$
using its evolution equation and the boundary inequality
\eqref{boudH}.
\begin{prop}\label{4.8}
 Along flow \eqref{equationflow}, there exists $C>0$ depending only on $\Sigma_{0}$ such that
\begin{equation}\label{ffff}
  H(p, t) \leq C, \quad \forall(p, t) \in \overline{\mathbb{B}}^{n} \times\left[0, T^{*}\right).
\end{equation}

\end{prop}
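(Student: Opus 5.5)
We apply the maximum principle to the evolution equation \eqref{H1} for $H$ together with the boundary inequality \eqref{boudH}. By the preceding propositions we already have two-sided bounds $0<c\le F\le C$, the uniform bounds \eqref{V-yghs}--\eqref{X-yghs}, and the $C^0$ bounds \eqref{C0}. In particular $A_\theta$, $V$, $x_{n+1}$ and $|X|$ are bounded above and below. Also $|\nabla A_\theta|\le C(1+|h|)\le C(1+H)$ by convexity, and $|\nabla V|\le C$. Fix $T<T^*$ and consider $\max H$ on $\overline{\mathbb B}^n\times[0,T]$. If the maximum is attained on $\partial\Sigma_t$ at a positive time, then \eqref{boudH} gives $\nabla_\mu H\le 0$. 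This is not quite a contradiction with a Hopf-type argument, so to obtain strict inequality we would instead work with $\widetilde H=H+\varepsilon\,\psi$ or $H e^{\lambda\psi}$, or simply note that a nonpositive Neumann derivative is compatible with the usual maximum principle for oblique problems. We therefore reduce to an interior maximum point, where $\nabla H=0$, $\nabla^2H\le0$ and $\mathcal L H\ge0$.

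At such a point we estimate the terms of \eqref{H1} one by one. Concavity of $F$ (Proposition \ref{garding-cone}) gives $\bar g^{ij}F^{kl;rs}h_{kl;i}h_{rs;j}\le0$, and $-2A_\theta F^{-3}|\nabla F|^2\le0$. The cross term satisfies $2F^{-2}\bar g(\nabla A_\theta,\nabla F)\le A_\theta F^{-3}|\nabla F|^2+C A_\theta^{-1}F^{-1}|\nabla A_\theta|^2$, so the gradient part is bounded by $C(1+H)^2$; we must therefore beat it with a negative $H^2$-type term. The available negative term is $-(A_\theta+V)F^{-1}|h|^2\le -\frac{c}{n}H^2$, using $|h|^2\ge H^2/n$ and $F\le C$. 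The potentially dangerous positive terms are $A_\theta F^{-2}H F^{ij}(h^2)_{ij}$ and $A_\theta F^{-2}H\mathcal F$. For the first one we use \eqref{F-111}, namely $F^{ij}(h^2)_{ij}\le (n-k+1)F^2$, so it is at most $CH$; the second is at most $kCH$ since $\mathcal F\le k$. The remaining terms $VH$, $(n-F^{-1}H)\bar g(E_{n+1},\nu)\le n$ and $-nA_\theta F^{-1}$ are linear or bounded.

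The main obstacle is the gradient cross term, because $|\nabla A_\theta|^2$ can be of order $H^2$. To handle it, we will not estimate $\nabla F$ crudely. Since $\nabla H=0$ at the maximum point, $\nabla F$ is not directly controlled, so we must keep the good term $-2A_\theta F^{-3}|\nabla F|^2$ in full and complete the square: $2F^{-2}\langle\nabla A_\theta,\nabla F\rangle-2A_\theta F^{-3}|\nabla F|^2\le \frac{1}{2A_\theta F}|\nabla A_\theta|^2$. We then need the constant in front of $H^2$ to be smaller than that of the good term. Writing $|\nabla A_\theta|^2=\sum_i(\bar g(e_i,E_{n+1})\ldots-\cos\theta\,\kappa_i\bar g(x,e_i)+\ldots)^2\le C+C\cos^2\theta\,|x|^2_{\bar g}|h|^2$, we compare $\frac{\cos^2\theta\,|x|^2}{2A_\theta F}$ with $\frac{A_\theta+V}{F}$. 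If this comparison fails, we fall back to the test function $\log H+\lambda\psi$ (or $\log H - V$). Dividing by $H$ makes the gradient term of order $H$ and turns the quadratic term into a linear one; the $-|X|^2$ term in \eqref{Phsi} then supplies a negative constant, and the boundary derivatives from \eqref{boundarypsi} and \eqref{factor} are checked to keep the normal derivative nonpositive, exactly as in the $F$ upper bound. In either version the conclusion is $\mathcal L(\cdot)<0$ once $H$ is large, giving $H\le C$ independent of $T$.
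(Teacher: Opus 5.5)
Up to the completion of the square you follow the paper exactly: the maximum principle for $H$ itself, $J_1\le 0$ by concavity, the linear terms controlled via \eqref{F-111} and $\mathcal F\le k$, and $2F^{-2}\bar g(\nabla A_\theta,\nabla F)-2A_\theta F^{-3}|\nabla F|^2\le (2A_\theta F)^{-1}|\nabla A_\theta|^2$. The decisive coefficient comparison, however, is left open, and as you set it up it can fail. Your bound $|\nabla A_\theta|^2\le C+C\cos^2\theta\,\bar g(x,x)|h|^2$ is too lossy. Even with constant exactly $1$, the required inequality $\cos^2\theta\,\bar g(x,x)<2A_\theta(A_\theta+V)$ reads $b^2<2(1-a)(2-a)$, where $a=\cos\theta\langle x,\nu_\delta\rangle$ and $b=\cos\theta|x|_\delta$. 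This is false when $a\approx b\approx 1$, for instance near the top of a cap with $R_2\cos\theta$ close to $1$, where $x\parallel\nu_\delta$.

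The missing idea is to keep only the tangential part of $x$. In a principal frame, $\nabla_i\bar g(x,\nu)=\kappa_i\bar g(x,e_i)+O(1)$ and $\kappa_i\ge 0$, so
\[
|\nabla A_\theta|^2\le\cos^2\theta\,|h|^2\bigl(\bar g(x,x)-\bar g(x,\nu)^2\bigr)+C(H+1).
\]
Substituting $\cos\theta\,\bar g(x,\nu)=V-A_\theta$ turns the net coefficient of $|h|^2$ into $-(2A_\theta F)^{-1}\bigl(3A_\theta^2+V^2-\cos^2\theta\,\bar g(x,x)\bigr)=-(2A_\theta F)^{-1}V^2\bigl(3(1-a)^2+1-b^2\bigr)$. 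The containment \eqref{C0} gives $b\le m=\cos\theta(1+R_2(1-\cos\theta))<1$, so the bracket is bounded below by a positive constant; this is the paper's claim \eqref{jsisj}. Then $0\le\mathcal LH\le -C_3|h|^2+C_4(H+1)$, together with $n|h|^2\ge H^2$, closes the argument.

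Your fallback to $\log H+\lambda\psi$ (or $\log H-V+\psi$) does not rescue the case where the comparison fails. Dividing by $H$ scales the bad term $|\nabla A_\theta|^2/(2A_\theta FH)$ and the good term $(A_\theta+V)|h|^2/(FH)$ by the same factor. Both remain of order $H$ with the same ratio, so if the bad term wins before division it still wins after. The bounded negative contribution $-\lambda(1-\cos\theta)^2R_1^2$ from \eqref{Phsi} cannot absorb a positive $O(H)$ term. Moreover, the critical-point condition for such a test function controls $\nabla H/H$, not $\nabla F/F$. For $k\ge 2$ it therefore gives no handle on the cross term $2F^{-2}\bar g(\nabla A_\theta,\nabla F)$, which is exactly what made the analogous test function $\log F-V+\psi$ work for the upper bound of $F$.
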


\begin{proof}Fix $T<T^*$. Unless the maximum of $H$ on
$\overline{\mathbb B}^n\times[0,T]$ occurs at $t=0$,
let $(p_0,t_0)$ be a maximum point of $H$ with $t_0>0$. It follows from \eqref{boudH} that $\nabla_{\mu} H \leq 0$ on $\partial \Sigma_{t}$, which rules out boundary maximum points, hence $p_{0} \in \mathbb{B}^{n}$ and there holds
%At a boundary maximum, maximality and \eqref{boudH} imply $\nabla_\mu H=0$. 
%Thus in either case
\begin{equation*}
 \nabla H=0,\qquad \nabla^2H\leq0,\qquad \partial_tH\geq0
 \quad\text{at }(p_0,t_0).
\end{equation*}
All subsequent computations are carried out at this point.

Recall the evolution equation \eqref{H1} of $H$ as follows:
\begin{eqnarray}\label{H108u}
0\leq\mathcal{L}H&=&A_{\theta}F^{-2}\bar{g}^{ij}F^{kl;rs}h_{kl;i}h_{rs;j}\nonumber\\ &&+\left[(n-F^{-1}H)\bar{g}(E_{n+1},\nu)+VH+A_{\theta}F^{-2}H F^{ij}(h^2)_{ij}+A_{\theta}F^{-2}H\mathcal{F}-nA_{\theta}F^{-1} \right]\nonumber\\%&&+[(n-F^{-1}H)\bar{g}(E_{n+1},\nu)+\left(A_{\theta}+V\right)H+A_{\theta}F^{-2}H\mathcal{F}-nA_{\theta}F^{-1}]\nonumber\\
&&+ \left[2F^{-2}\bar{g}(\nabla A_{\theta},\nabla F)-\left(A_{\theta}+V\right)|h|^{2}F^{-1}-2A_{\theta}F^{-3}|\nabla F|^{2} \right]\nonumber\\
&\coloneqq &J_{1}+J_{2}+J_{3}.
\end{eqnarray}
From the concavity of $F$ by Proposition \ref{garding-cone} and \eqref{A-yghs}, we see
\begin{equation}\label{J1-beg}
  J_{1}\coloneqq A_{\theta}F^{-2}\bar{g}^{i j} h_{k l ; i} h_{rs;j}F^{kl;rs} \leq 0.
\end{equation}
Using \eqref{F-111} and $1\leq\mathcal F\leq k$, and the positive lower bound for $F$, we obtain
\begin{equation}\label{J2-sok}
 J_2\leq C_1H+C_2,
\end{equation}
where $C_{1}$ and $C_{2}$ are some positive constants.

We now estimate $J_{3}$.
\begin{eqnarray*}
J_{3}&\coloneqq &2F^{-2}\bar{g}(\nabla A_{\theta},\nabla F)-\left(A_{\theta}+V\right)F^{-1}|h|^{2} -2A_{\theta}F^{-3}|\nabla F|^{2}\\
&=&-2A_{\theta}F^{-3}|\nabla F-(2A_{\theta})^{-1}F\nabla A_{\theta}|^{2}+(2A_{\theta})^{-1}F^{-1}|\nabla A_{\theta}|^{2}-\left(A_{\theta}+V\right)F^{-1}|h|^{2}\nonumber\\
&\leq&(2A_{\theta})^{-1}F^{-1}|\nabla A_{\theta}|^{2}-\left(A_{\theta}+V\right)F^{-1}|h|^{2}.\nonumber
\end{eqnarray*}
By choosing an orthonormal principal frame at $(p_0,t_0)$, we may assume that $g_{ij}=\delta_{ij}$ and $h_{ij}=\lambda_i\delta_{ij}$. Thus
\begin{eqnarray*}
|\nabla A_{\theta}|^{2}&=&|\nabla (V-\cos\theta\bar{g}(x,\nu))|^{2}
\leq\cos^{2}\theta\sum_{i=1}^{n}\lambda_{i} ^2\bar{g}(e_{i},x)^{2}+C(H+1)\\
&\leq&\cos^{2}\theta|h|^{2}(\bar{g}(x,x)-\bar{g}(x,\nu)^{2})+C(H+1).
\end{eqnarray*}
It follows that
\begin{eqnarray}
J_{3}&\leq&(2A_{\theta})^{-1}F^{-1}(|\nabla A_{\theta}|^{2}-2A_{\theta}(V+A_{\theta})|h|^{2}) \notag\\
&\leq&(2A_{\theta})^{-1}F^{-1}|h|^{2}[\cos^{2}\theta(\bar{g}(x,x)-\bar{g}(x,\nu)^{2})-2A_{\theta}(V+A_{\theta})]+C_{1}(H+1)\nonumber\\
&=&-(2A_{\theta})^{-1}F^{-1}|h|^{2}[3A^{2}_{\theta}-\cos^{2}\theta\bar{g}(x,x)+V^{2}]+C_{1}(H+1).\label{eqn:J-3-ineq}
\end{eqnarray}

Now we \textbf{claim}:
\begin{equation}\label{jsisj}
3A_{\theta}^{2}-\cos^{2}\theta\bar{g}(x,x)+V^{2}\geq\varepsilon_{0}>0.
\end{equation}
Indeed, by the containment condition \eqref{assump}
\begin{equation*}
  \Sigma_{t}\subseteq \widehat{C_{\theta,R_{2}}(E_{n+1})} \quad\text{and}\quad R_{2}\cdot\cos\theta<1,
\end{equation*}
which implies
\begin{equation}\label{sxgi}
  |x|\leq|x-(1-R_{2}\cos\theta)E_{n+1}|+(1-R_{2}\cos\theta)\leq R_{2}+(1-R_{2}\cos\theta)=1+R_{2}(1-\cos\theta),
\end{equation}
and
\begin{equation}\label{sxxs}
  m\coloneqq \cos\theta(1+R_{2}(1-\cos\theta))<\cos\theta+(1-\cos\theta)=1.
\end{equation}
Using \eqref{sxgi}-\eqref{sxxs} we get
\begin{equation*}%\label{xjsis}
  \cos^{2}\theta\bar{g}(x,x)=\cos^{2} \theta V^{2}|x|^{2}\leq m^{2}V^{2}<V^{2}.
\end{equation*}
Thus \textbf{claim} \eqref{jsisj} follows.

Inserting \eqref{jsisj} into \eqref{eqn:J-3-ineq}, we have
 \begin{equation*}%\label{J3-a}
   J_{3}\leq -C_{3}|h|^{2}+C_{1}(H+1).
 \end{equation*}
Together with \eqref{J1-beg} and \eqref{J2-sok}, it follows from \eqref{H108u} that
\begin{equation*}%\label{sdxs}
  0\leq\mathcal{L}H=J_{1}+J_{2}+J_{3}\leq-C_{3}|h|^{2}+C_{4}(H+1).
\end{equation*}
Using the simple inequality $n|h|^{2}\geq H^{2}$, we conclude that $H$ is bounded above.  
\end{proof}

Proposition \ref{4.8}, together with the positivity of the
principal curvatures provided by Theorem
\ref{thm:horo-convex-preserving} or Corollary
\ref{convex-prev}, gives the following upper curvature bound. 
\begin{corollary}\label{cor56}
 Along flow \eqref{equationflow}, $\Sigma_{t}$ for $t\in\left[0, T^{*}\right)$ has uniform curvature bound, that is, there exists $C>0$ depending only on $\Sigma_{0}$ such that the principal curvatures of $\Sigma_{t}$,
\begin{equation*}
\max _{1\leq i\leq n} \kappa_{i}(p, t) \leq C, \quad \forall(p, t) \in \overline{\mathbb{B}}^{n} \times\left[0, T^{*}\right).
\end{equation*}
\end{corollary}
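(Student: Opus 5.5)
The bound on the largest principal curvature will come straight from the bound on $H$ in Proposition \ref{4.8}, once we know that every principal curvature is nonnegative. So the plan is first to get a uniform lower bound on each $\kappa_i$, and then to compare $\max_i \kappa_i$ with $H=\sum_i\kappa_i$.

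For the lower bound we split into the two settings of this section. If $\Sigma_0$ is horo-convex and $F=H_k/H_{k-1}$, Theorem \ref{thm:horo-convex-preserving} says $h-g\geq0$ along the flow. Hence $\kappa_i\geq1>0$ for every $i$, at all points and times in $[0,T^*)$. If instead $\Sigma_0$ is only strictly convex and $F=H_n/H_{n-1}$, Corollary \ref{convex-prev} gives $\kappa_i\geq c>0$, with $c$ depending only on $\Sigma_0$. In either case the principal curvatures stay positive on $\overline{\mathbb B}^n\times[0,T^*)$.

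With all $\kappa_j\geq0$, we get for each $i$
\begin{equation*}
\kappa_i\leq \sum_{j=1}^n\kappa_j=H\leq C,
\end{equation*}
where $C$ is the constant of Proposition \ref{4.8}. That constant depends only on $\Sigma_0$, through the barrier bounds \eqref{C0} and the bounds on $F$ from the previous propositions. Taking the maximum over $i$ gives the corollary.

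Nothing here is hard. The one point to check is that Proposition \ref{4.8} holds in both settings, i.e.\ that its inputs are available in each: the lower bound on $F$ and the bound $\mathcal F\leq k$. In the horo-convex case $F\geq1$ follows from $\kappa_i\geq1$. In the convex case the lower bound on $F$ comes from the preceding proposition. Positivity of the curvatures, needed for concavity and for $J_1\leq0$, is exactly what we established in the first step.
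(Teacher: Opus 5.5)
Your proposal is correct and follows essentially the same argument as the paper. Positivity of the principal curvatures comes from Theorem \ref{thm:horo-convex-preserving} in the horo-convex case, or from Corollary \ref{convex-prev} in the strictly convex case with $F=H_n/H_{n-1}$; combined with the mean curvature bound of Proposition \ref{4.8}, it gives $\kappa_i\leq H\leq C$, and your check that the inputs of Proposition \ref{4.8} hold in both settings (e.g.\ $F\geq F(1,\ldots,1)=1$ under horo-convexity by monotonicity and homogeneity) is sound.
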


\subsection{Long-time existence and smooth convergence} \ %\label{convergence-section}

We first use the scalar formulation in \eqref{scalarflow} and the curvature bounds to establish uniform regularity and long-time existence. Quermassintegral monotonicity then shows that every subsequential limit is an umbilical cap. Finally, a comparison with the model caps determines the position of the limiting cap and gives convergence of the whole flow.
\begin{prop}
The flow \eqref{equationflow} exists for all time and has uniform
$C^\infty$-estimates on $[t_0,\infty)$ for every $t_0>0$.
\end{prop}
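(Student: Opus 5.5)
The plan is to turn the curvature bounds of Section \ref{curvature-sec} into uniform estimates for the scalar problem \eqref{scalarflow} and then run the standard continuation argument. On $[0,T^*)$ we already have:
\begin{itemize}
\item the barrier estimate \eqref{C0}, which bounds $u=\log\rho$ above and below uniformly, since \eqref{eq:rho-bounds} gives $(1-\cos\theta)R_1\le\rho\le R_2$;
\item the star-shapedness bound \eqref{X-yghs}, which bounds $v$, because \eqref{bd3} gives $v=\rho e^{\omega}/\bar g(X,\nu)$ with $e^\omega=V\le1$, and hence $|\nabla^{\mathbb S}u|\le C$;
\item Corollary \ref{cor56} together with Theorem \ref{thm:horo-convex-preserving} or Corollary \ref{convex-prev}, which place the principal curvatures in a fixed compact subset $[c,C]^n$ of $\Gamma_+$.
\end{itemize}
Through the formula for $h_i^j$ in terms of $u$, the curvature pinching gives a uniform $C^2$ bound on $u$. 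The same pinching makes $F^{ij}$ uniformly equivalent to the identity, and \eqref{A-yghs} keeps $A_\theta\ge\epsilon_1$. Consequently \eqref{scalarflow} is uniformly parabolic, and the boundary operator stays uniformly oblique because $\langle\nabla_pJ,\partial_\beta\rangle=\sin^2\theta>0$.

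The next step is to upgrade to $C^{2,\alpha}$. The operator $F=H_k/H_{k-1}$ is concave by Proposition \ref{garding-cone}, and after multiplying by the positive bounded factor the equation $\partial_tu=v\rho^{-1}e^{-\omega}f$ is concave in $D^2u$: the speed has the form $-\,\mathrm{const}\cdot F^{-1}$ plus lower-order terms, and $-1/F$ is concave where $F$ is concave and positive. I would therefore write the equation as $\partial_t u=G(D^2u,Du,u,z)$ with $G$ concave in $D^2u$ and uniformly elliptic on the range of $(D^2u,Du,u)$ already controlled. The Krylov--Safonov and Lieberman theory for fully nonlinear concave parabolic equations with nonlinear oblique boundary conditions (\cite[Chapter 14]{GD}, or Lieberman's book) then gives a $C^{2,\alpha;1+\alpha/2}$ bound on $\overline{\mathbb S^n_+}\times[t_0,T^*)$. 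Differentiating the equation and applying linear Schauder theory with oblique boundary conditions bootstraps this to uniform $C^m$ bounds for every $m$, on $[t_0,T^*)$ for any $t_0>0$. The restriction to $t_0>0$ is there only to avoid the compatibility conditions at $t=0$. All constants depend only on $\Sigma_0$, not on $T^*$.

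With these estimates the conclusion follows by continuation. Suppose $T^*<\infty$. The uniform bounds let $u(\cdot,t)$ converge smoothly as $t\to T^*$ to a limit that is still strictly convex, with $\kappa_i\ge c$, and still satisfies $A_\theta\ge\epsilon_1$. Short-time existence, restarted from this limit, then extends the solution beyond $T^*$ while keeping smoothness, strict convexity and $A_\theta>0$. This contradicts the maximality of $T^*$, so $T^*=\infty$ and the estimates hold on $[t_0,\infty)$.

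The main obstacle is the $C^{2,\alpha}$ step at the boundary. The boundary condition $\nabla^{\mathbb S}_{\partial_\beta}u=\cos\theta\, v$ is nonlinear in $Du$, so interior Krylov--Safonov is not enough. I would first try to cite Lieberman's boundary H\"older estimate for second derivatives of concave fully nonlinear equations under oblique nonlinear conditions. If that proves awkward, the fallback is the approach used for capillary flows in the half-space and ball settings \cite{WWX1,WengX}: apply a local flattening near $\partial\mathbb S^n_+$ and treat the problem there as a concave equation with an oblique boundary condition that is uniformly controlled by the $C^1$ bound.
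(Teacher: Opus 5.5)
Your proposal is correct and follows the paper's argument: the barrier, star-shapedness and two-sided curvature bounds make \eqref{scalarflow} uniformly parabolic with a uniformly oblique boundary condition, the paper then quotes Lieberman's boundary estimate \cite[Theorem 14.22]{Lieb} for uniform $C^{2+\alpha,1+\alpha/2}$ bounds, and Schauder bootstrapping plus continuation finish exactly as you outline. One slip to correct: $\partial_t u=\rho^{-1}ve^{-\omega}A_\theta F^{-1}-1$, the coefficient $\rho^{-1}ve^{-\omega}A_\theta>0$ depends only on $(Du,u,z)$, and $1/F$ is convex wherever $F$ is concave and positive, so the operator is convex (not concave) in $D^2u$; this is harmless, because the same Evans--Krylov/Lieberman theory applies to convex operators (for instance, pass to $w=-u$, which gives a concave operator whose boundary condition is still uniformly oblique after multiplying it by $-1$).
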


\begin{proof}
The estimates from Theorem \ref{thm:horo-convex-preserving} and Corollaries \ref{convex-prev}-\ref{cor56} make equation \eqref{scalarflow} uniformly parabolic on a compact admissible curvature range, with a smooth uniformly oblique boundary condition. Therefore, we may apply the local boundary estimates of \cite[Theorem 14.22]{Lieb} (see also, \cite[Theorem 6.1, Theorem 6.4 and Theorem 6.5]{GD} and \cite[Theorem 5]{NN}). These imply uniform $C^{2+\alpha,1+\alpha/2}$ estimates on positive-time strips. Thus, parabolic Schauder estimates and continuation yield long-time existence and uniform $C^\infty$ estimates on $[t_0,\infty)$ for every $t_0>0$.
\end{proof}

\begin{prop}%\label{uniqueness-convergence}
The hypersurfaces $\Sigma_t$ converge smoothly, after
reparameterization, to a uniquely determined umbilical cap
centred at $E_{n+1}$ as $t\to\infty$.
\end{prop}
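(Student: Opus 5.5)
The argument has three steps: show that every subsequential limit is an umbilical cap, pin down its radius through the conserved quantity $W_{k,\theta}$, and then upgrade subsequential convergence to convergence of the whole flow.

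\emph{Step 1: limits are umbilical caps.} By the preceding proposition the flow exists for all time with uniform $C^\infty$ bounds on $[t_0,\infty)$. By \eqref{C0} every $\Sigma_t$ is trapped between $C_{\theta,R_1}(E_{n+1})$ and $C_{\theta,R_2}(E_{n+1})$. By Proposition \ref{Wk-Wn}, $W_{k,\theta}$ is constant and $W_{l,\theta}$, $l<k$, is nondecreasing; we take $l=k-1$ for $k\ge2$, and $W_{0,\theta}=\operatorname{Vol}$ when $k=1$, which is handled similarly via the first variation formula. Since $W_{k-1,\theta}(\widehat\Sigma_t)$ is also bounded above (by the $C^1$ bounds), integrating the monotonicity identity gives
\[
\int_0^\infty\!\!\int_{\Sigma_t} H_{k-1}\Bigl(\frac{A_\theta H_{k-1}}{H_k}-\bar g(X,\nu)\Bigr)\,dA_t\,dt<\infty .
\]
By the proof of Proposition \ref{Wk-Wn} and the Minkowski formula \eqref{mink2}, the inner integral equals
\[
\int_{\Sigma_t} A_\theta\,\frac{H_{k-1}^2-H_kH_{k-2}}{H_k}\,dA_t \;\ge 0 .
\]
By the uniform bounds, its time derivative is bounded, so it tends to $0$ as $t\to\infty$. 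For any sequence $t_i\to\infty$, Arzel\`a--Ascoli gives a subsequence converging smoothly to a limit $\Sigma_\infty$ with $\theta$-capillary boundary. On $\Sigma_\infty$ we have $A_\theta>0$, the curvatures lie in $\Gamma_+$, and $H_{k-1}^2=H_kH_{k-2}$. The equality case of Proposition \ref{Hk} then makes $\Sigma_\infty$ totally umbilic. A compact connected umbilic hypersurface in $\mathbb H^{n+1}$ is a piece of a Euclidean sphere, so with the capillary condition on $\mathcal H$ it is an umbilical cap $C_{\theta,r}(a)$ with some horizontal centre $a$.

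\emph{Step 2: the limit is centred at $E_{n+1}$ and its radius is determined.} This is the main obstacle. The quermassintegrals are invariant under horizontal translations, so they fix only the radius: since $W_{k,\theta}(C_{\theta,r}(a))=f_k(r)$ is strictly increasing and $W_{k,\theta}$ is conserved, $r=r_\infty=f_k^{-1}(W_{k,\theta}(\widehat\Sigma_0))$ is the same for every subsequence. To fix the centre, I would use the barriers more sharply. Let
\[
r_-(t)=\max\{r:\ \widehat{C_{\theta,r}(E_{n+1})}\subseteq\widehat\Sigma_t\},\qquad r_+(t)=\min\{r:\ \widehat\Sigma_t\subseteq \widehat{C_{\theta,r}(E_{n+1})}\}.
\]
By the avoidance principle used for \eqref{C0}, applied at every time, $r_-$ is nondecreasing and $r_+$ is nonincreasing, so both have limits $r_\pm^\infty$. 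Every subsequential limit cap $C_{\theta,r_\infty}(a)$ is squeezed between $C_{\theta,r_-^\infty}(E_{n+1})$ and $C_{\theta,r_+^\infty}(E_{n+1})$, and by smooth convergence it touches both. Now suppose $a\neq0$. I would restart the flow from the limit cap, which is a stationary solution. The key claim to verify is that off-centre caps $C_{\theta,r}(a)$ with $a\neq0$ are not stationary: \eqref{dfec} holds with $X$ taken relative to the cap's own centre, and $\bar g(aE,\nu)$ is not identically zero. Given this, the strong maximum principle (including the Hopf lemma at the oblique boundary) strictly increases $r_-$ or strictly decreases $r_+$ in positive time. By continuous dependence on initial data, the same strict change would then occur for $\Sigma_{t_i+s}$, contradicting the monotone limits. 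Hence $a=0$ and $r_-^\infty=r_+^\infty=r_\infty$.

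\emph{Step 3: convergence of the whole flow.} By Step 2 the limit $C_{\theta,r_\infty}(E_{n+1})$ is independent of the subsequence, so the whole flow converges to it in $C^\infty$. I would express this as convergence of the radial function $u(\cdot,t)$ in the scalar formulation \eqref{scalarflow}; this is convergence after reparametrization, since the tangential part $\mathcal T$ only reparametrizes the hypersurfaces.
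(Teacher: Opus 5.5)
Your Steps 1 and 3 and the determination of the radius through $W_{k,\theta}$ match the paper. The centre is identified by a genuinely different, valid mechanism. There is one localized gap, at $k=1$.

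\textbf{The gap at $k=1$.} You propose to use $W_{0,\theta}=\operatorname{Vol}$ ``similarly via the first variation formula''. That formula only gives $\frac{d}{dt}\operatorname{Vol}(\widehat\Sigma_t)=\int_{\Sigma_t}\bigl(A_\theta/H_1-\bar g(X,\nu)\bigr)\,dA_t$. There is no Minkowski formula of index $0$ that turns $\int_{\Sigma_t}\bar g(X,\nu)\,dA_t$ into an $A_\theta$-weighted curvature integral; the divergence theorem only gives $(n+1)\int_{\widehat\Sigma_t}V$. A sign, together with an equality case, would therefore require a capillary Heintze--Karcher inequality in the horoball, which is not available here.

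The paper avoids this by using $W_{2,\theta}$, which is defined since $n\ge2$. By \eqref{eqn:lower-order-1vf} and \eqref{mink2} with index $2$, $\partial_tW_{2,\theta}=\frac{n-1}{n+1}\int_{\Sigma_t}A_\theta\bigl(H_2/H_1-H_1\bigr)\,dA_t\le0$, and equality forces $H_1^2=H_2$. For $k\ge2$, your choice $l=k-1$ works just as well as the paper's $l=1$.

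\textbf{The centre: comparison of routes.} You use the same monotone extremal radii $r_\pm(t)$ of centred caps as the paper, but you close the argument differently.

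The paper computes, at a point realizing an extremal radius, $\frac{dr}{dt}=c(t)\bigl(\frac1r-\cos\theta-F\bigr)$ with $c$ bounded between positive constants. Step 1 and the conservation of $W_{k,\theta}$ give $F\to r_\infty^{-1}-\cos\theta$ uniformly, whatever the horizontal centre. Hence a limit $r_\pm^\infty\ne r_\infty$ would produce a drift bounded away from zero, which is impossible. This is an elementary first-order comparison needing only the avoidance principle.

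Your route is the qualitative $\omega$-limit argument. It needs invariance of limits under the flow (continuous dependence, or compactness of time-translated flows plus uniqueness), the strong maximum principle, and the Hopf lemma at the oblique capillary boundary. It costs these PDE facts, but it is stronger than you use it. The flow issuing from any subsequential limit has $r_\pm\equiv r_\pm^\infty$. So it touches the stationary caps $C_{\theta,r_\pm^\infty}(E_{n+1})$ at every positive time, and the strong maximum principle forces it to coincide with both.

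Consequently your ``key claim'' that off-centre caps are not stationary is not needed. It is true: their speed is $-\bar g(a,\nu)\bigl(1+\cos\theta/\kappa\bigr)$. However, calling the limit cap ``a stationary solution'' one line earlier contradicts it. The umbilicity from Step 1 is not needed either, so this route, run as above, also repairs the $k=1$ case.
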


\begin{proof} For $k\geq2$, Proposition \ref{Wk-Wn} shows that $W_{1,\theta}(\widehat\Sigma_t)$ is non-decreasing. Precisely,
\begin{eqnarray}\label{eq:derivative_W1}
\partial_{t} W_{1, \theta}(\widehat{\Sigma_{t}})&=&\frac{n}{n+1}\int_{\Sigma_{t}}H_{1}\left(\frac{A_{\theta}H_{k-1}}{H_{k}}-\bar{g}(X,\nu)\right)dA\nonumber\\
&=&\frac{n}{n+1} \int_{\Sigma_{t}}A_{\theta}\left(\frac{H_{1} H_{k-1}}{H_{k}}-1\right)dA\geq0,
\end{eqnarray}
where we used \eqref{A-yghs} and \eqref{mink2} and Newton--MacLaurin inequality \eqref{Newton}.

The uniform estimates and compact barriers imply that
$W_{1,\theta}(\widehat\Sigma_t)$ is bounded above. Thus
\begin{equation*}
\int_0^\infty\partial_tW_{1,\theta}(\widehat\Sigma_t)~dt
=\lim_{t\to\infty}W_{1,\theta}(\widehat\Sigma_t)
-W_{1,\theta}(\widehat\Sigma_0)<\infty.
\end{equation*}
The nonnegative $\partial_{t}W_{1,\theta}(\widehat\Sigma_t)$
is integrable and uniformly continuous by the preceding estimates,
and therefore tends to zero. It follows from \eqref{eq:derivative_W1}, the lower bound
$A_\theta\geq c>0$, and the equality case of \eqref{Newton}
that every subsequential limit is an umbilical cap for $k\geq2$.

For $k=1$, we replace $W_{1,\theta}$ with $W_{2,\theta}$ in \eqref{eq:derivative_W1}. By
\eqref{eqn:lower-order-1vf} and \eqref{mink2}, we see
\begin{eqnarray*}
\partial_{t}W_{2,\theta}(\widehat\Sigma_t)
=\frac{n-1}{n+1}\int_{\Sigma_t}
A_\theta\left(\frac{H_2}{H_1}-H_1\right)dA\leq0.
\end{eqnarray*}
The uniform estimates bound $W_{2,\theta}$ from below and make
its derivative uniformly continuous. Hence $\partial_{t}W_{2,\theta}(\widehat\Sigma_t)$ also tends
to zero. Since $A_\theta>0$, the equality case
$H_1^2=H_2$ of \eqref{Newton} implies that every subsequential limit is again an umbilical cap.

The preservation property of $W_{k,\theta}$ and its strict monotonicity on strictly convex model caps determine the same radius $r_\infty$. Consequently, $F\to r_\infty^{-1}-\cos\theta$ uniformly.

Finally, the uniqueness of the convergent subsequence can be proved similarly in \cite[Proposition 3.15]{WengX} (see also \cite[Proposition 4.5]{Guo1}) by the barrier-radius argument. 
Roughly, the largest and smallest radii of centred comparison caps are
monotone and bounded. At a point realizing either extremal radius, from  \eqref{dfec} and \eqref{f} we have,
\begin{equation*}
 \frac{dr}{dt}
 =\frac{2x_{n+1}r^{2}\bar g(X,\nu)}
 {(r^{2}\sin^2\theta+|X|^{2})F}
 \cdot\left(\frac1r-\cos\theta-F\right).
\end{equation*}
The prefactor is bounded above and below by positive constants.
Since $F\to (r_\infty^{-1}-\cos\theta)$ uniformly as $t\to\infty$, a limiting extremal
radius different from $r_\infty$ would have derivative bounded
away from zero, contradicting the boundedness. Thus both extremal
radii tend to $r_\infty$, and smooth precompactness yields smooth
convergence to $C_{\theta,r_\infty}(E_{n+1})$ after reparameterization.
Hence we complete the proof. 
\end{proof}

\

\section{Alexandrov--Fenchel inequalities and a Gauss--Bonnet--Chern  formula}\label{sec5} 

The quermassintegral inequalities will be proved by combining the convergence of the flow with quermassintegral monotonicity. We first treat Theorem \ref{mainthm-horo} for horo-convex hypersurfaces, then deduce Theorem~\ref{mainthm} under the containment assumption, and finally evaluate $W_{n+1,\theta}$ through its deformation invariance.

\subsection{Proof of Theorem \ref{mainthm-horo}}

For $0<r<\frac{1}{\cos\theta}$, we consider the flow $x: \overline{\mathbb{B}}^{n} \times[0,+\infty) \rightarrow \mathcal{H}_{+}$ given by
\begin{eqnarray*}
\partial_{t} x&=&x-E_{n+1}\,\,\, \qquad\quad \text { on } \overline{\mathbb{B}}^{n} \times[0,+\infty)\nonumber \\
x(\cdot, 0)&=&C_{\theta, r_{0}}(E_{n+1}) \,\,\,\quad\quad \text { on } \overline{\mathbb{B}}^{n}.\nonumber
\end{eqnarray*}
The flow hypersurfaces of this flow are $C_{\theta, r(t)}(E_{n+1})$. By direct calculation, we see
\begin{equation*}
  r'(t)=r(t)>0 \,\,\,\,\quad \text{with} \,\,\,r(0)=r_{0}>0.
\end{equation*}
Hence from Theorem \ref{mainthm0.1}, we have
\begin{equation*}
r'(t)\frac{d}{dr}f_{k}(r)=\partial_{t} W_{k,\theta}\left(\widehat{C_{\theta, r(t)}(E_{n+1})}\right)=\frac{n+1-k}{n+1}\int_{C_{\theta, r(t)}(E_{n+1})} H_{k}\bar{g}(X,\nu)\, dA_{t}>0,
\end{equation*}
where the last inequality follows from the strict convexity of the umbilical cap; hence, it yields that $f_{k}(r)$ are strictly increasing functions with respect to $r$. 

For a strictly convex hypersurface $\Sigma$, consider the
contractions
\begin{equation}\label{contractions}
y(p,t)=E_{n+1}+t\left(x(p)-E_{n+1}\right),
\quad (p,t)\in\Sigma\times(0,1].
\end{equation}
Denote their images by $\Sigma_t$. They preserve the contact
angle and strict convexity by \eqref{conformal-relation} and
Proposition \ref{bounded}.  Their outward normal speed is
\begin{equation*}
 \bar g(\partial_t y,\nu)= \bar g(t^{-1}(y-E_{n+1}),\nu)
 =\frac1t\bar g(X,\nu)>0.
\end{equation*}
The definitions of the quermassintegrals give
$W_{l,\theta}(\widehat\Sigma_t)=O(t^{n+1-l})$ for
$0\leq l\leq n$. Hence, for $1\leq l\leq n$,
Theorem \ref{mainthm0.1} implies
\begin{equation*}
 W_{l,\theta}(\widehat\Sigma)
 =\frac{n+1-l}{n+1}
   \int_0^1\frac1t\int_{\Sigma_t}
      H_l\bar g(X,\nu)\,dA_t\,dt>0.
\end{equation*}
Therefore, $W_{l,\theta}(\widehat\Sigma)$ lies in the image of $f_l$,
so $f_l^{-1}\bigl(W_{l,\theta}(\widehat\Sigma)\bigr)$ is well defined.

By evolving along the flow \eqref{equationflow} with initial horo-convex hypersurface $\Sigma$, we denote its convergence limit by the umbilical cap  $C_{\theta, r_{\infty}}(E_{n+1})$. The barriers imply $r_\infty\leq R_2<1/\cos\theta$, with
the usual convention when $\theta=\pi/2$. By
Proposition \ref{Wk-Wn},
\begin{equation*}
 0<W_{l,\theta}(\widehat\Sigma)
 \leq f_l(r_\infty).
\end{equation*}
Since $f_l$ is continuous and strictly increasing and
$f_l(r)\to0$ as $r\to0$, this proves that
$W_{l,\theta}(\widehat\Sigma)$ lies in the image of $f_l$.  Hence, using Proposition \ref{Wk-Wn} again for $1\leq l<k\leq n$,
\begin{eqnarray*}%\label{horo-quesoiuj}
 W_{k, \theta}(\widehat{\Sigma})&=&W_{k, \theta}\left(\widehat{C_{\theta, r_{\infty}}(E_{n+1})}\right)=f_{k}\left(r_{\infty}\right) \\
&=&f_{k}\circ f_{l}^{-1} \circ f_{l}\left(r_{\infty}\right) \geq f_{k} \circ f_{l}^{-1}\left(W_{l, \theta}(\widehat{\Sigma})\right) .
\end{eqnarray*}

If equality holds, preservation of $W_{k,\theta}$ and strict
monotonicity of the model functions imply
$W_{l,\theta}(\widehat\Sigma)=f_l(r_\infty)$.
Since $W_{l,\theta}$ is non-decreasing, its derivative vanishes
for every positive time. By the proof of Proposition
\ref{Wk-Wn},
\begin{equation*}
 \int_{\Sigma_t}A_\theta
 \left(\frac{H_lH_{k-1}}{H_k}-H_{l-1}\right)dA_t=0.
\end{equation*}
The integrand is nonnegative and $A_\theta>0$. The equality
case of \eqref{Newton} therefore makes $\Sigma_t$ umbilical.
Passing to the initial time shows that $\Sigma$ is umbilical.
Conversely, every umbilical cap with the prescribed contact
angle has the same quermassintegrals as the centred model cap
of the same radius, by horizontal translation invariance,
and hence attains equality.

\qed

\

\subsection{Proof of Theorem \ref{mainthm}}

The model functions $f_{k}(r)$ and their strict monotonicity have been established in the proof of Theorem \ref{mainthm-horo}.

Suppose first that $\Sigma$ is strictly convex. Evolving $\Sigma$ by the flow \eqref{equationflow} with $k=n$, we denote its convergence limit by the umbilical cap given by $C_{\theta, r_{\infty}}(E_{n+1})$, then, from Proposition \ref{Wk-Wn} for $k=n$, we have
\begin{eqnarray*}
 W_{n, \theta}(\widehat{\Sigma})&=&W_{n, \theta}\left(\widehat{C_{\theta, r_{\infty}}(E_{n+1})}\right)=f_{n}\left(r_{\infty}\right) \\
&=&f_{n}\circ f_{l}^{-1} \circ f_{l}\left(r_{\infty}\right) \geq f_{n} \circ f_{l}^{-1}\left(W_{l, \theta}(\widehat{\Sigma})\right) .
\end{eqnarray*}
The equality argument in the proof of Theorem
\ref{mainthm-horo}, applied with $k=n$, shows that equality holds precisely for umbilical caps with the prescribed contact angle.

We now consider the weakly convex case. The smallest centred model
cap enclosing $\widehat\Sigma$ has radius at most $R$.
At a contact point, comparison gives
$h\geq(R^{-1}-\cos\theta)g>0$. If contact occurs at the
boundary, the common contact angle gives the same tangent plane;
the second derivative comparison on its inward half extends
to the whole tangent plane because a quadratic form is even.
Thus $\Sigma$ has a strictly convex interior point.

Under the contractions \eqref{contractions}, the Euclidean normal
is unchanged and \eqref{conformal-relation} gives
\begin{equation*}
 \kappa_i(\cdot,t)
 =\frac{1+t(x_{n+1}-1)}{t x_{n+1}}\kappa_i
 +\frac{1-t}{t x_{n+1}}
   \langle E_{n+1},\nu_\delta\rangle,
 \qquad 0<t<1.
\end{equation*}

For $\theta\in(0, \pi/2)$, we have $\langle E_{n+1},\nu_\delta\rangle\geq\cos\theta>0$ by the proof of Proposition \ref{bounded}. Hence these contractions are convex and retain a strictly convex
interior point. The same first variation argument as above gives
$W_{l,\theta}(\widehat\Sigma)>0$ for $1\leq l\leq n$.
Moreover, the displayed formula also makes the
contractions strictly convex for $t<1$, providing smooth
approximations as $t\to1$.

For $\theta=\pi/2$, we construct strictly convex approximations
that preserve the free boundary condition. Since $E_{n+1}\notin\Sigma$, we consider the Euclidean inversion
\begin{eqnarray*}
I(x)\coloneqq E_{n+1}
     +\frac{x-E_{n+1}}{|x-E_{n+1}|^2},
\end{eqnarray*}
where $|\cdot|$ denotes the Euclidean norm. This map is smooth
near $\Sigma$, preserves $\mathcal H=\{x_{n+1}=1\}$, and is conformal. Hence
$\widetilde\Sigma:=I(\Sigma)$ also meets $\mathcal H$
orthogonally.

Let $\widetilde x=I\circ x$, and choose the Euclidean unit normal
$\widetilde\nu_\delta$ corresponding to the original outward
normal under $I$. Define
\begin{eqnarray*}
\widetilde x_s
=\widetilde x-s\widetilde\nu_\delta,
\qquad
x_s=I\circ\widetilde x_s.
\end{eqnarray*}
For sufficiently small $|s|$, the Euclidean parallel hypersurfaces
$\widetilde x_s$ have the same unit normal
$\widetilde\nu_\delta$. Along the boundary, this normal is tangent
to $\mathcal H$. Thus their boundaries remain in $\mathcal H$,
and they continue to meet $\mathcal H$ orthogonally.
The same properties hold after inversion.

Compactness and transversality to $\mathcal H$ ensure that, after
reducing the parameter interval, $\Sigma_s:=x_s(\Sigma)$ remains
properly embedded in $\mathcal H_+$. Moreover, since
$I\circ I=\operatorname{Id}$,
\begin{eqnarray*}
x_0=x,
\qquad
x_s\to x
\quad\text{in }C^\infty\text{ as }s\to0.
\end{eqnarray*}

Differentiating the construction gives the normal variation
\begin{eqnarray*}
\partial_s x_s=f\nu,
\qquad
f=-\frac{|X|^2}{x_{n+1}},
\qquad
\mathcal T=0.
\end{eqnarray*}
Writing $d_{\mathbb H}(x,E_{n+1})$ for the hyperbolic distance,
we have
\begin{eqnarray*}
f=2-2\cosh d_{\mathbb H}(x,E_{n+1}),
\qquad
\bar\nabla^2f=(f-2)\bar g.
\end{eqnarray*}
From Proposition \ref{formula111} (5) we have
\begin{eqnarray*}
\partial_s h_j^{\,i}
=
2\delta_j^{\,i}
+(\bar\nabla_\nu f)h_j^{\,i}
-fh_k^{\,i}h_j^{\,k}.
\end{eqnarray*}
Thus each principal curvature satisfies
\begin{eqnarray*}
\partial_s\kappa_i
=
2+(\bar\nabla_\nu f)\kappa_i-f\kappa_i^2.
\end{eqnarray*}

Since $\kappa_i(0)\ge0$ and the right-hand side equals $2$ when
$\kappa_i=0$, the principal curvatures remain nonnegative.
The coefficients are uniformly bounded on a sufficiently small
parameter interval. We choose $C>0$ such that
$|\bar\nabla_\nu f|\le C$. Since $f\le0$, we obtain
\begin{eqnarray*}
\partial_s\kappa_i\ge2-C\kappa_i.
\end{eqnarray*}
Scalar comparison therefore gives
\begin{eqnarray*}
\kappa_i(s)
\ge e^{-Cs}\kappa_i(0)
   +\frac{2}{C}(1-e^{-Cs})
>0
\qquad\text{for }s>0.
\end{eqnarray*}
Hence $\Sigma_s$ is strictly convex for all sufficiently small
positive $s$ and converges smoothly to $\Sigma$, while preserving
the free boundary condition.

%For $\theta=\pi/2$, invert in the Euclidean unit sphere centred at $E_{n+1}$, take outward Euclidean parallel hypersurfaces, and invert back. Both operations preserve orthogonality to $\mathcal H$; small displacements preserve proper embeddedness.
%The resulting deformation has $\mathcal T=0$ and normal speed
%\begin{equation*}
% f=-\frac{|X|^2}{x_{n+1}}.
%\end{equation*}
%Since $\bar\nabla^2f=(f-2)\bar g$, Proposition
%\ref{formula111} (5) yields
%\begin{equation*}
% \partial_t h_j^i
% =2\delta_j^i+(\bar\nabla_\nu f)h_j^i
%   -f h_k^i h_j^k.
%\end{equation*}
%Its coefficients are uniformly bounded for small time, and the derivative of each principal curvature is $2$ when that curvature is zero. Scalar comparison therefore gives strict convexity for small positive time, including at the boundary.

In either case, the approximations retain
$E_{n+1}\in\operatorname{int}_{\mathcal H}
(\widehat{\partial\Sigma})$ and satisfy \eqref{assump} with
a common radius $R_2>R$ such that $R_2\cos\theta<1$.
Applying the strictly convex case and its barriers, then passing
to the limit, we obtain
\begin{equation*}
 0<W_{l,\theta}(\widehat\Sigma)\leq f_l(R_2).
\end{equation*}
Thus the inverse of $f_l$ is well defined at
$W_{l,\theta}(\widehat\Sigma)$ and is continuous. %Consequently, continuity of the quermassintegrals proves the inequality for $\Sigma$.
Consequently, continuity of the quermassintegrals allows us to
pass to the limit in the inequality for $\Sigma_s$,
yielding
\begin{equation}\label{AF-ineq-case2}
W_{n,\theta}(\widehat\Sigma)
\ge
f_n\circ f_l^{-1}
\bigl(W_{l,\theta}(\widehat\Sigma)\bigr).
\end{equation}

For the equality case, we use the local variational argument of
\cite[Section 4]{SWX}, with the first variation formula
\eqref{eqn:lower-order-1vf} for the present setting. Denote
\begin{equation*}
 r\coloneqq f_l^{-1}(W_{l,\theta}(\widehat\Sigma)).
\end{equation*}
Then $0<r\le R_2$, so $r^{-1}-\cos\theta>0$. Define
\begin{eqnarray*}
\Sigma_+
\coloneqq 
\{p\in\operatorname{int}\Sigma:h(p)>0\}.
\end{eqnarray*}
The preceding contact argument shows that $\Sigma_+$ is nonempty.

Let $\varphi\in C_c^\infty(\Sigma_+)$, and consider a normal
variation $\Sigma_s$ whose normal speed at $s=0$ is $\varphi$.
For sufficiently small positive and negative $s$, this variation
preserves convexity, embeddedness, and all boundary conditions.
The containment condition is also preserved by using the
outer radius $R_2>R$.

The difference between the two sides of the above inequality \eqref{AF-ineq-case2}, with $\Sigma$ replaced by $\Sigma_s$, is
nonnegative along this variation and vanishes at $s=0$.
Differentiating at this minimum and applying
Theorem~\ref{mainthm0.1}, we obtain
\begin{eqnarray*}
0=
\frac{1}{n+1}
\int_{\Sigma_+}
\left(
H_n-(n+1-l)\frac{f_n'(r)}{f_l'(r)}H_l
\right)\varphi dA.
\end{eqnarray*}
Since $\varphi$ is arbitrary,
\begin{eqnarray*}
H_n=(n+1-l)\frac{f_n'(r)}{f_l'(r)}H_l=\left(\frac1r-\cos\theta\right)^{n-l}H_l
\quad\text{on }\Sigma_+.
\end{eqnarray*}
The second equality follows from
the model cap first variation formula. By the Newton--MacLaurin
inequality $H_l\geq H_n^{l/n}$, we conclude that
\begin{equation*}
 H_n\geq\left(\frac1r-\cos\theta\right)^n>0\quad\text{on }\Sigma_+.
\end{equation*}
The principal curvatures of the fixed smooth hypersurface are
bounded above. Thus they have a uniform positive lower bound on $\Sigma_+$. This set $\Sigma_+$ is nonempty, open
and closed in $\operatorname{int}\Sigma$. Connectedness and
continuity up to the boundary show that $\Sigma$ is strictly
convex. The equality characterization in the strictly convex
case therefore applies.

Finally, we turn to prove the assertion \eqref{eq:top-order-GB}.
The geometric argument in the proof of Proposition \ref{bounded}
shows that $\Sigma$ is also a Euclidean convex capillary hypersurface in
the Euclidean half-space $\{x_{n+1}\geq1\}$. Let $x:\Sigma\rightarrow\mathcal H_+$ be the given embedding. We consider the same contraction flow as in \eqref{contractions}.
It then follows from Theorem \ref{mainthm0.1} that
\begin{equation*}
W_{n+1,\theta}(\widehat\Sigma)
=W_{n+1,\theta}(\widehat\Sigma_t),\qquad t\in(0,1].
\end{equation*}
By the Euclidean scaling and \eqref{quermassinterga-Euclidean}, we get
\begin{equation*}
W_l^{\mathbb R^n}(\widehat{\partial\Sigma_t})
=t^{n-l}W_l^{\mathbb R^n}(\widehat{\partial\Sigma}),
\qquad 0\leq l\leq n-1.
\end{equation*}
Moreover, the definitions \eqref{Wk-formula} give
$W_{n-1,\theta}(\widehat\Sigma_t)=O(t^{2})$. The conformal curvature relation shows that
$\int_{\Sigma_t}H_n dA_t$ tends to the Euclidean total
Gauss curvature of $\Sigma$, which equals the area of the spherical cap with polar angle $\theta$, see \cite[Eq. (1.13)]{WWX1}. Therefore, we have
\begin{equation*}
\lim_{t\rightarrow0}\int_{\Sigma_t}H_n dA_t
=\omega_{n-1}\int_0^\theta\sin^{n-1}s~ds.
\end{equation*}
We obtain equality \eqref{eq:top-order-GB} by taking $t\rightarrow0$ in \eqref{Wn-1113}. 
\qed

\bigskip 

\noindent{\it Acknowledgements.} J. Guo is supported by NSFC (Grant No.12501062). The authors used AI tools to assist with language editing and to identify points requiring mathematical and bibliographic
verification. The authors checked all mathematical arguments, references and revisions and take full responsibility for the
final manuscript.

\

\end{document}